\newcommand{\mysection}[1]{\section{#1}
\setcounter{equation}{0}}
\newtheorem{theorem}{Theorem}[section]
\newtheorem{corollary}[theorem]{Corollary}
\newtheorem{lemma}[theorem]{Lemma}
\newtheorem{proposition}[theorem]{Proposition}
\theoremstyle{definition}
\newtheorem{remark}[theorem]{Remark}
\theoremstyle{definition}
\theoremstyle{definition}
\newtheorem{assumption}[theorem]{Assumption}
\def\dashint{\operatorname%
{\,\,\text{\bf--}\kern-.98em\DOTSI\intop\ilimits@\!\!}}
\newcommand{\WO}{\mathring{W}}
\newcommand\bL{\mathbb{L}}
\def\bR{\mathbb{R}}
\def\bZ{\mathbb{Z}}
\def\bC{\mathbb{C}}
\def\bB{\mathbf{B}}
\def\cC{\mathcal{C}}
\def\cM{\mathcal{M}}
\def\cO{\mathcal{O}}
\def\cL{\mathcal{L}}
\newcommand{\Div}{\operatorname{div}}
\title [Elliptic equations with irregular coefficients]{On elliptic equations in a half space or in convex wedges with irregular coefficients}
\author[H. Dong]{Hongjie Dong}
\address[H. Dong]{Division of Applied Mathematics, Brown University, 182 George Street, Box F, Providence, RI 02912, USA}
\email{Hongjie\_Dong@brown.edu}
\thanks{Email: Hongjie\_Dong@brown.edu, Tel: 1-4018637297, Fax: 1-4018631355. The author was partially supported by the NSF under agreement DMS-0800129 and DMS-1056737.}
\date{\today}
\subjclass[2010]{35K10}
\keywords{Second-order elliptic equations, boundary value problems, measurable coefficients, Sobolev spaces}
\begin{document}

\begin{abstract}
We consider second-order elliptic equations in a half space with
leading coefficients measurable in a tangential direction. We
prove the $W^2_p$-estimate and solvability for the Dirichlet
problem when $p\in (1,2]$, and for the Neumann problem when $p\in
[2,\infty)$. We then extend these results to equations with more
general coefficients, which are measurable in a tangential
direction and have small mean oscillations in the other
directions. As an application, we obtain the $W^2_p$-solvability
of elliptic equations in convex wedge domains or in convex polygonal domains with discontinuous coefficients.
\end{abstract}

\maketitle

\mysection{Introduction}
                                    \label{sec1}

In this paper we study second-order elliptic equations in non-divergence form:
\begin{equation*}
               %                                 \label{parabolic}
L u-\lambda u=f
\end{equation*}
in a half space,
where $\lambda\ge 0$ is a constant and
\begin{equation*}
              %                                      \label{9.30.3}
L u=a^{ij}D_{ij}u+b^{i}D_i u+cu
\end{equation*}
is an uniformly elliptic operator with bounded and measurable coefficients. The leading coefficients $a^{ij}$ are symmetric, merely measurable in a tangential direction, and either independent or have very mild regularity in the orthogonal directions. This type of equations typically arises in homogenization of layered materials with boundaries perpendicular to the layers.

The $L_p$ theory of non-divergence form second-order  elliptic and
parabolic equations with discontinuous coefficients was studied
extensively by many authors. According to the well-known
counterexample of Nadirashvili, in general there does not exist
solvability theory for uniformly elliptic operators with general
bounded and measurable coefficients. In the last fifty years, many
efforts were made to treat particular types of discontinuous
coefficients. The $W^2_2$-estimate for elliptic equations with
measurable coefficients in smooth domains was obtained by Bers and
Nirenberg \cite{BN54} in the two dimensional case in 1954, and by
Talenti \cite{Ta65} in any dimensions under the Cordes condition.
In \cite{Ca67} Campanato established the $W^2_p$-estimate for
elliptic equations with measurable coefficients in 2D for $p$ in a
neighborhood of $2$. A corresponding result for parabolic
equations can be found in Krylov \cite{Kr70}. By using explicit
representation formulae, Lorenzi \cite{Lo72a, Lo72b} studied the
$W^2_2$ and $W^2_p$ estimates for elliptic equations with
piecewise constant coefficients in the upper and lower half
spaces. See also \cite{Sa76} for a similar result for parabolic
equations and a recent paper \cite{Ki07} by Kim for elliptic
equations in $\bR^d$ with leading coefficients which are
discontinuous at finitely many parallel hyperplanes. In \cite{Chi}
Chiti obtained the $W^2_2$-estimate for elliptic equations  in
$\bR^d$ with coefficients which are measurable functions of a
fixed direction.

Another notable type of discontinuous coefficients contains functions with vanishing
mean oscillation (VMO) introduce by Sarason. The study of elliptic and parabolic equations with VMO coefficients was initiated by Chiarenza, Frasca, and Longo \cite{CFL1} in 1991 and continued in \cite{CFL2} and \cite{BC93}.
We also refer the read to Lieberman \cite{Li03} for an elementary treatment of elliptic equations with VMO coefficients in Morrey spaces.
%The proofs in \cite{CFL1, CFL2, BC93} are based on the Calder\'on-Zygmund theorem and the Coifman-Rochberg-Weiss commutator theorem.
In \cite{Krylov_2005} Krylov gave a unified
approach to investigating the $L_p$ solvability of both
divergence and non-divergence form  elliptic and parabolic equations in the whole space with
leading coefficients that are in VMO in the spatial
variables (and measurable in the time variable in the parabolic case).
Unlike the arguments in \cite{CFL1,CFL2,BC93} which are based on the certain representation formulae and Calder\'on--Zygmund theory, the proofs in
\cite{Krylov_2005} rely mainly on pointwise estimates of sharp
functions of spatial derivatives of solutions, so that
VMO coefficients are treated in a rather straightforward
manner. Later this approach was further developed to treat more general types of coefficients. In \cite{KimKrylov07} Kim and Krylov established the $W^2_p$-estimate, for $p\in (2,\infty)$, of elliptic equations in $\bR^d$ with leading coefficients measurable in a fixed direction and VMO in the orthogonal directions, which, in particular, generalized the result in \cite{Chi}. By using a standard method of odd and even extensions, their result carries over to equations in a half space when the leading coefficients are measurable in the normal direction and VMO in all the tangential directions. Recently, the results in \cite{KimKrylov07} were extended in \cite{Krylov08} and \cite{D_TAMS11}, in the latter of which the restriction $p>2$ was dropped. We also mention that in \cite{DK_TAMS10} the $W^2_p$-estimates, for $p\ge 2$ close to 2, were obtained for elliptic and parabolic equations. The leading coefficients are assumed to be measurable in the time variable and {\em two} coordinates of space variables, and almost VMO with respect to the other coordinates. In particular, these results extended the aforementioned results in \cite{Ca67} and \cite{Kr70} to high dimensions.

Since the work in \cite{KimKrylov07}, the following problem remains open: {\em Do we have a $W^2_p$-estimate for uniformly elliptic operators in a half space with leading coefficients measurable in a tangential direction and, say, with the homogeneous Dirichlet boundary condition?}

The main objective of the paper is to fully answer this problem.
Apparently, in this case the estimate does not follow from the results in \cite{KimKrylov07} and \cite{D_TAMS11} by using the method of odd and even extensions, since after even and odd extensions one obtains an elliptic equation in the whole space with leading coefficients measurable in two directions, i.e., a tangential direction and the normal direction. In fact, the answer to the problem is negative for general $p>2$: for any $p>2$ there is an elliptic operator with ellipticity constant depending on $p$, such that the $W^2_p$ estimate does not hold for this operator; cf. Remark \ref{rm5.46}.
Nevertheless, by invoking Theorems 2.2
and 2.8 of \cite{DK_TAMS10}, the extension argument does give an affirmative answer to the problem for any $p\in [2,2+\varepsilon)$ close to 2, where $\varepsilon$ depends on the dimension and the ellipticity constant. In this paper, we shall give an affirmative answer in the remaining case $p\in (1,2)$. We also consider equations with the Neumann boundary condition, in which case we prove the $W^2_p$ estimate for any $p\in [2,\infty)$.

To precisely describe our results (Theorems \ref{thm1} and \ref{thm2}), we first introduce a few notation. Let $d\ge 2$ be an integer and $\lambda\ge 0$ be a constant.
A typical point
in $\bR^{d}$ is denoted by $x=(x^{1},...,x^{d})=(x',x'')$, where $x'=(x^1,x^2)\in \bR^2$ and $x''=(x^3,\ldots,x^d)\in \bR^{d-2}$.  Consider the elliptic equation
\begin{equation}
                                \label{eq8.51}
Lu-\lambda u:=a^{ij}(x^2)D_{ij}u-\lambda u=f
\end{equation}
in a half space $\bR^d_+=\{x\in \bR^d\,|\,x^1>0\}$ with the homogeneous Dirichlet or Neumann boundary condition.
Assume that $a^{ij}$ are measurable, bounded, symmetric, and uniformly elliptic, i.e., there is a constant $\delta\in (0,1)$ such that for any unit vector $\xi\in \bR^d$, we have
\begin{equation}
                                        \label{eq17.53}
\delta\le a^{ij}\xi^i\xi^j,\quad |a^{ij}|\le \delta^{-1}.
\end{equation}
The main results of the paper are as follows. Suppose $p\in (1,2]$ (or $p\in [2,\infty)$) and $u\in W^2_p(\bR^d_+)$ satisfies \eqref{eq8.51} with the Dirichlet boundary $u=0$ (or the Neumann condition $D_1 u=0$, respectively) on $\partial\bR^d_+$. Then the following apriori estimate holds:
$$
\lambda\|u\|_{L_{p}(\bR^d_+)}+\lambda^{1/2}
\|Du\|_{L_{p}(\bR^d_+)}+\|D^2u\|_{L_{p}(\bR^d_+)}\le
N(d,\delta,p)\|f\|_{L_{p}(\bR^d_+)}.
$$
Moreover, for any $\lambda>0$ and $f\in L_p(\bR^d_+)$, there is a unique solution $u\in W^2_p(\bR^d_+)$ to the problem. The range of $p$ is sharp in the Dirichlet case; see Remark \ref{rm5.46}.

Let us give a brief description of the proofs. We note that since
the coefficients are merely measurable functions, the classical
Calder\'on--Zygmund approach no longer applies. Furthermore,
solutions to the homogeneous problem generally only possess
$C^{1,\alpha}$ regularity near the boundary. So the approach in
\cite{Krylov_2005} cannot be applied directly either. Our main
idea of the proofs is that, thanks to the assumption
$a^{ij}=a^{ij}(x^2)$, after a change of variables we can rewrite
$L$ into a divergence form operator of certain type (see
\eqref{eq10.41}). In the case of the Dirichlet boundary condition,
we shall show that $v:=D_1 u$ satisfies a divergence form equation
with the conormal derivative boundary condition. This crucial
observation was used before by Jensen \cite{Je80} and Lieberman
\cite{Li92} in different contexts. While in the case of the
Neumann boundary condition, we show that $v$ satisfies the same
divergence form equation with the Dirichlet boundary condition. Then the problem
is reduced to study the $W^1_p$-estimate for these divergence form
operators. By a duality argument, it suffices to focus on the case
$p>2$. However, due to the lack of regularity of the coefficients
in the $x^2$ direction, the usual interior and boundary
$C^{\alpha}$ estimates of $Dv$ do not hold. Nevertheless, we apply
the DeGiorgi--Nash--Moser estimate, the reverse H\"older's
inequality, and an anisotropic Sobolev inequality to get boundary
and interior $C^\alpha$ estimate of certain linear combination of
$Dv$ (Lemmas \ref{lem2.1} and \ref{lem2.2}), using also the
properties of the operators. With this $C^\alpha$ estimate at
disposal, we are able to use Krylov's approach mentioned above to
established the desired $W^1_p$-estimate of $v$.

In the paper, we also treat elliptic equations with more general coefficients which are measurable in a tangential direction and VMO with respect to the other variables. We obtain similar $W^2_p$-estimates for both the Dirichlet problem and the Neumann problem (cf. Theorems \ref{thm3} and \ref{thm4}), generalizing Theorems \ref{thm1} and \ref{thm2}.

The significance of these results is that they can be used to deduce new $W^2_p$-estimate for elliptic equations with discontinuous coefficients in convex wedges or convex polygonal domains. More precisely, we obtain the $W^2_p$-estimate, $p\in (1,2+\varepsilon)$, for non-divergence form elliptic equation with VMO coefficients in a convex wedge $\Omega_\theta=\cO_\theta\times \bR^{d-2}$, where
$$
\theta\in (0,\pi),\quad
\cO_\theta=\{x'\in\bR^2\,|\,x_1> |x'|\cos(\theta/2)\},
$$
with the homogeneous Dirichlet boundary condition; cf. Theorem \ref{thm5.1} and the remark below it. The range of $p$ is sharp even for Laplace equations or equations with constant coefficients. See Remark \ref{rm5.46}, \cite[Theorem 4.3.2.4]{Gr85}, or \cite[Sect. 4.3.1]{MR08}.

There is a vast literature on the $L_p$ theory for elliptic and parabolic equations in domains with wedges or with conical or angular points. See, for instance, \cite{Ko67,Gr85, KO83,Da88,NP94, KMR97,MR08} and the references therein. For Laplace equations in convex domains or in Lipschitz domains, we also refer the reader to \cite{Ad93,Fr93,JK95,FMM98}. It is worth mentioning that Hieber and Wood \cite{HW07} extended the aforementioned results in \cite{Ta65, Ca67} to equations with measurable coefficients in bounded convex domains. In \cite{Lo75,Lo77a} Lorenzi considered elliptic equations with piecewise constant coefficients in two sub-angles of an angular domain in the plane with zero right-hand side and inhomogeneous Dirichlet boundary conditions. Heat equations in domains with wedges were studied in \cite{So84, So01, Na01}. However, in all these references, either the leading coefficients are assumed to be constants or sufficiently regular, or $p$ needs to be in a small neighborhood of $2$. To our best knowledge, Theorem \ref{thm5.1} appears to be the first of its kind about non-divergence elliptic equations in convex wedge domains with discontinuous coefficients, which does not impose any additional condition on $p$.

The paper is organized as follows. In the next section we consider divergence form elliptic operators of two different types. We obtain $W^1_p$-estimate for these operators, which are crucial in the proofs of the main results in Section \ref{sec3}. In Section \ref{sec4}, we treat coefficients which are measurable in $x^2$ and VMO with respect to other variables. We discuss the applications to equations in convex wedges or convex polygonal domains in Section \ref{sec5}.

\mysection{Some auxiliary estimates}
                    \label{sec2}

We introduce $\bL_1$ as the collection of divergence form operators
$$
\cL u=D_i(a^{ij}D_{j}u)
$$
satisfying \eqref{eq17.53} and $a^{j1}\equiv 0,j=2,\ldots,d$.
Similarly, we introduce $\bL_2$ as the collection of divergence
form operators $\cL u=D_i(a^{ij}D_{j}u)$ satisfying
\eqref{eq17.53} and $a^{1j}\equiv 0,j=2,\ldots,d$. Notably, the adjoint operator of any operator in $\bL_1$ is in $\bL_2$, and the adjoint operator of any operator in $\bL_2$ is in $\bL_1$. In the
remaining part of this section as well as in the next section, we
assume $a^{ij}=a^{ij}(x^2)$.

For any $r>0$, denote $\tilde B_r^+$ to be the half ball of radius $r$ in $\bR^2$
and $\hat B_r$ to be the ball of radius $r$ in $\bR^{d-2}$. Both
of them are centered at the origin. We also denote $\Gamma_r=B_r\cap \partial\bR^d_+$.

We will use the following
special form of the Sobolev imbedding theorem, which follows by
applying the standard Sobolev imbedding theorem to $x'$ and $x''$
separately. For reader's convenience, we give a proof of it in the
appendix.
\begin{lemma}
                                                \label{lem0}
Let $\Omega=\tilde B_1^+\times \hat B_1$, $p_0\in (2,\infty)$ be a constant, and $k\ge d/2$ be an integer. Suppose that the function $f\in L_{p_0}(\Omega)$ satisfies
$$
D_{x''}^j f,\,\,D_{x''}^jD_{x'} f\in L_{p_0}(\Omega)
$$
for any $0\le j\le k$. Then we have $f\in C^{1-2/p_0}(\Omega)$ and
\begin{equation*}
\|f\|_{C^{1-2/{p_0}}(\Omega)}\\
\le N\sum_{j=0}^k \Big(\|D_{x''}^j
f\|_{L_{p_0}(\Omega)}+N\|D_{x''}^jD_{x'}f\|_{L_{p_0}(\Omega)}\Big)
\end{equation*}
for some constant $N=N(d,p_0)>0$.
\end{lemma}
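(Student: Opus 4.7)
The strategy, suggested by the statement of the lemma, is to combine the standard Morrey inequality in the planar variable $x'$ (dimension $2$, using $p_0>2$) with the standard Sobolev embedding in the variable $x''$ (dimension $d-2$, using $k\ge d/2>(d-2)/p_0$). Via Fubini's theorem I would identify $L_{p_0}(\Omega)=L_{p_0}(\tilde B_1^+;L_{p_0}(\hat B_1))=L_{p_0}(\hat B_1;L_{p_0}(\tilde B_1^+))$, which allows me to run Banach-space-valued versions of each embedding on the factored domain $\Omega=\tilde B_1^+\times\hat B_1$.

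First I would establish H\"older regularity in $x'$, uniformly in $x''$. For each fixed $x''\in\hat B_1$ the two-dimensional Morrey inequality on the half-ball gives
\[
\|f(\cdot,x'')\|_{C^{1-2/p_0}(\tilde B_1^+)}\le N\bigl(\|f(\cdot,x'')\|_{L_{p_0}(\tilde B_1^+)}+\|D_{x'}f(\cdot,x'')\|_{L_{p_0}(\tilde B_1^+)}\bigr),
\]
and to make the right-hand side uniform in $x''$ I would apply the vector-valued Sobolev embedding $W^{k,p_0}(\hat B_1;X)\hookrightarrow C(\hat B_1;X)$ with $X=L_{p_0}(\tilde B_1^+)$, once with $g=f$ and once with $g=D_{x'}f$, in each case yielding
\[
\sup_{x''\in\hat B_1}\|g(\cdot,x'')\|_{L_{p_0}(\tilde B_1^+)}\le N\sum_{j=0}^k\|D_{x''}^j g\|_{L_{p_0}(\Omega)}.
\]
Combining the two gives a uniform-in-$x''$ bound on $\|f(\cdot,x'')\|_{C^{1-2/p_0}(\tilde B_1^+)}$ by the right-hand side of the claimed estimate.

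Swapping the roles of the two variables, I would apply the vector-valued $2$D Morrey inequality with $X=L_{p_0}(\hat B_1)$ to each $D_{x''}^j f$ to bound $\sup_{x'}\|D_{x''}^j f(x',\cdot)\|_{L_{p_0}(\hat B_1)}$ by the right-hand side, and then apply the scalar Sobolev embedding $W^{k,p_0}(\hat B_1)\hookrightarrow C^{1-2/p_0}(\hat B_1)$ (valid since $k-(d-2)/p_0\ge d/2-(d-2)/2=1\ge 1-2/p_0$) to obtain
\[
\sup_{x'\in\tilde B_1^+}\|f(x',\cdot)\|_{C^{1-2/p_0}(\hat B_1)}\le N\sum_{j=0}^k\bigl(\|D_{x''}^j f\|_{L_{p_0}(\Omega)}+\|D_{x''}^j D_{x'}f\|_{L_{p_0}(\Omega)}\bigr).
\]
The final H\"older estimate on $\Omega$ then follows from the splitting
\[
f(x_1',x_1'')-f(x_2',x_2'')=[f(x_1',x_1'')-f(x_2',x_1'')]+[f(x_2',x_1'')-f(x_2',x_2'')],
\]
bounding the first piece via H\"older-in-$x'$ at fixed $x_1''$ and the second via H\"older-in-$x''$ at fixed $x_2'$.

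The main routine hurdle is justifying the vector-valued Sobolev embeddings on the half-ball and on the ball; this is handled by a standard extension operator applied in each factor. Since the condition $k\ge d/2$ gives considerable margin in both underlying embeddings, no delicate interpolation step is required, and the final constant depends only on $d$ and $p_0$.
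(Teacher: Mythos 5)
Your proof is correct and takes essentially the same route as the paper's: split the H\"older difference quotient via the triangle inequality into a piece where only $x'$ varies and a piece where only $x''$ varies, then apply the $2$D Morrey embedding and the $W^{k}_{p_0}(\hat B_1)$ Sobolev embedding factorwise. The only stylistic difference is that you package the ``uniformity in the other variable'' step as a Banach-space-valued Sobolev embedding, whereas the paper achieves the same thing more concretely by taking pointwise suprema at fixed $x'$ (resp.\ $x''$) and then integrating via Fubini.
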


Due to the lack of regularity of the coefficients $a^{ij}$ in $x^2$, the usual interior and boundary $C^{1,\alpha}$ estimates of solutions do not hold. Nevertheless, we prove the following boundary H\"older estimate for operators $\cL\in \bL_2$.

\begin{lemma}
                                    \label{lem2.1}
Let $\lambda\ge 0$ be a constant and $\cL\in \bL_2$. Suppose that $u\in C^\infty(\bar B_2^+)$ satisfies
$$
\cL u-\lambda u=0\quad \text{in}\,\,B_2^+
$$
with the conormal derivative boundary condition
$a^{11}D_1 u=0$ on $\Gamma_2$. Then there exist
constants $\alpha\in (0,1)$ and $N>0$ depending only on $d$ and
$\delta$ such that
$$
\|D_1 u\|_{C^\alpha(B_1^+)}+\|D_{x''}
u\|_{C^\alpha(B_1^+)}+\|a^{2j}D_j u\|_{C^\alpha(B_1^+)}
+\lambda^{1/2}\|u\|_{C^\alpha(B_1^+)}
$$
\begin{equation}
                                        \label{eq20.01}
\le N\|Du\|_{L_2(B_2^+)}+N\lambda^{1/2} \|u\|_{L_2(B_2^+)}.
\end{equation}
\end{lemma}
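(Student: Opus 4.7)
The plan is to exploit that $a^{ij}=a^{ij}(x^2)$ is independent of $x^1$ and $x''$, so the equation is translation-invariant in those directions. Consequently each iterated derivative $D_{x''}^{\alpha}D_1^{k}u$ satisfies the same equation $\cL w - \lambda w = 0$ on $B_2^+$. The conormal condition $a^{11}D_1 u=0$ together with the ellipticity bound $a^{11}\ge\delta$ forces $D_1 u=0$ on $\Gamma_2$, so derivatives containing at least one $D_1$ inherit a homogeneous Dirichlet condition, while pure tangential derivatives in $x''$ retain the conormal one. Applying the boundary DeGiorgi--Nash--Moser estimate to $u$, to $D_1 u$, and to $D_k u$ ($k\ge 3$) with the appropriate boundary condition then directly produces the $C^\alpha$ bounds for the terms $\lambda^{1/2}\|u\|_{C^\alpha}$, $\|D_1 u\|_{C^\alpha}$, and $\|D_{x''}u\|_{C^\alpha}$ in \eqref{eq20.01}; the ensuing $L^2$ norms are controlled by $E:=\|Du\|_{L_2(B_2^+)}+\lambda^{1/2}\|u\|_{L_2(B_2^+)}$ by iterating Caccioppoli on the differentiated equations, the zero-order term $-\lambda u$ with $\lambda\ge 0$ only helping.

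The remaining quantity is $F^2:=a^{2j}D_j u$, and this is the main obstacle: because $a^{2j}$ is merely measurable in $x^2$, $D_2 u$ itself cannot be expected to be H\"older continuous. I would feed $F^2$ into the anisotropic Sobolev embedding of Lemma~\ref{lem0}, which upgrades $L^{p_0}$ bounds on $D_{x''}^{j}F^2$ and $D_{x''}^{j}D_{x'}F^2$ (for $j\le d/2$ and some $p_0>2$) to the required $C^{1-2/p_0}$ estimate. Differentiation in the ``good'' directions commutes with the coefficients, so $D_{x''}^{\alpha}D_1^{l}F^2=a^{2i}D_i(D_{x''}^{\alpha}D_1^{l}u)$, and the needed $L^{p_0}$ bound is obtained by applying Gehring's reverse H\"older inequality to the $L^2$ Caccioppoli estimate for the Dirichlet or conormal problem solved by $D_{x''}^{\alpha}D_1^{l}u$.

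The genuinely delicate step is the normal derivative $D_2 F^2$, which cannot be extracted by commutation; instead the PDE supplies
\[
D_2 F^2 \;=\; \lambda u \;-\; D_1(a^{11}D_1 u) \;-\; \sum_{k=3}^{d} D_k(a^{kj}D_j u),
\]
and similarly after applying any number of $D_{x''}$'s. Each summand on the right is of a form already handled: the zero-order contribution $\lambda D_{x''}^{j}u$ is treated via a uniform-in-$\lambda$ Moser $L^\infty$ bound (with a rescaling argument in the regime of large $\lambda$), while the remaining pieces are $x^1$- or $x^k$-derivatives ($k\ge 3$) of a flux $a^{ij}D_j w$ with $w$ itself a solution, again amenable to the Caccioppoli--Gehring bounds of the previous paragraph. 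Substituting the resulting $L^{p_0}$ estimates into Lemma~\ref{lem0} yields $\|F^2\|_{C^{1-2/p_0}(B_1^+)}\le N E$, which completes \eqref{eq20.01} with $\alpha=1-2/p_0$.
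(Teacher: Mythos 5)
Your strategy mirrors the paper's for the case $\lambda=0$: you exploit translation invariance in $x^1,x''$, read off the boundary condition for $D_1u$ (Dirichlet) and $D_ku$, $k\ge3$ (conormal), apply DeGiorgi--Nash--Moser to get $C^\alpha$ for $D_1u$ and $D_{x''}u$, get $L_{p_0}$ bounds for second derivatives from Caccioppoli and Gehring's reverse H\"older, extract $D_2(a^{2j}D_ju)$ from the PDE, and conclude via the anisotropic Sobolev Lemma~\ref{lem0}. That is exactly the paper's route. The place where the proposals diverge is the treatment of $\lambda>0$, and there your sketch has a genuine gap.

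The paper reduces $\lambda>0$ to $\lambda=0$ by Agmon's device: set $v(x,y)=u(x)\big(\cos(\lambda^{1/2}y)+\sin(\lambda^{1/2}y)\big)$, so $v$ solves an equation of the same type in $\bR^{d+1}$ with no zeroth-order term. The virtue of this is precisely that the troublesome contribution $\lambda u$ in the identity $D_2(a^{2j}D_ju)=\lambda u-D_1(a^{11}D_1u)-\sum_{k\ge3}D_k(a^{kj}D_ju)$ is replaced by $-D_y^2v$, which is a genuine second derivative of a solution and is handled on the same footing as all the others via Caccioppoli/Gehring; one never needs to bound $\lambda u$ itself. Your direct route must supply $\lambda\|D_{x''}^ju\|_{L_{p_0}(B_r^+)}\le N\big(\|Du\|_{L_2(B_2^+)}+\lambda^{1/2}\|u\|_{L_2(B_2^+)}\big)$, and neither of the tools you invoke delivers this: a uniform-in-$\lambda$ Moser bound only gives $\|D_{x''}^ju\|_{L^\infty}\le N\|D_{x''}^ju\|_{L_2}$, which after multiplication by $\lambda$ leaves an extra factor of $\lambda^{1/2}$, and a rescaling $x\mapsto\lambda^{1/2}x$ rescales the domain to a ball of radius $2\lambda^{1/2}$ and the coefficient to $a^{ij}(\lambda^{-1/2}x^2)$, so it does not reduce to a fixed-domain estimate. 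The direct approach can be salvaged by iterating Caccioppoli (the good sign of $-\lambda\int u^2\eta^2$ gives $\lambda\|w\|^2_{L_2(B_{r'}^+)}\le N\|w\|^2_{L_2(B_r^+)}$ for any solution $w$ on shrinking balls, and iterating twice wins the missing $\lambda^{1/2}$ before applying Moser), but that is a separate observation which your sketch does not make; as written, the $\lambda D_{x''}^ju$ term is not controlled. Either insert the iterated Caccioppoli argument explicitly, or adopt the paper's Agmon reduction, which is cleaner.
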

\begin{proof}
Denote $w_j=D_j u,j=1,\ldots,d$ and $y$ to be a point in $\bR^{d-1}$. First we consider the case when
$\lambda=0$.
%Note that since $a^{ij}=a^{ij}(x^2)$ and $D_1u=0$ on $B_2\cap\partial \bR^d_+$,
We claim that: i) $w_1$ satisfies $\cL w_1=0$ in $B_2^+$ with the
Dirichlet boundary condition $w_1=0$ on $\Gamma_2$, and ii)
$w_k,k>2$ satisfies $\cL w_k=0$ in $B_2^+$ with the conormal
derivative boundary condition $a^{11}D_1 w_k=0$ on $\Gamma_2$. To
verify the first claim, we take a function $\xi\in
C_0^\infty(\{|y|<2\})$ and an even function $\eta\in
C_0^\infty([-1,1])$ satisfying $\eta(0)=1$ and $\eta$ is
decreasing in $(0,1)$. We also take $\varepsilon>0$ sufficiently
small such that
$$
\psi(x):=\eta(x^1/\varepsilon)\xi(x^2,\ldots,x^d)\in C_0^\infty(B_2).
$$
Since $u$ satisfies $\cL u=0$ in $B_2^+$
with the conormal derivative boundary condition
$a^{11}D_1 u=0$ on $\Gamma_2$, it holds that
\begin{equation}
                                    \label{eq21.06}
\int_{B_2^+}a^{ij}D_ju D_i\psi \,dx=0.
\end{equation}
Note that, by the smoothness of $u$ and the conditions $a^{1j}=0$
for $j\ge 2$ and $a^{ij}=a^{ij}(x^2)$, we have
\begin{align*}
\lim_{\varepsilon\to 0}\int_{B_2^+}a^{1j}D_ju D_1\psi \,dx
&=\lim_{\varepsilon\to 0}\int_{B_2^+}a^{11}D_1u D_1\psi \,dx\\
&=-\int_{|y|<2}a^{11}D_1u(0,y) \xi(y)\,dy,
\end{align*}
and
$$
\lim_{\varepsilon\to 0}\int_{B_2^+}\sum_{i\ge 2}a^{ij}D_ju D_i\psi \,dx
=\lim_{\varepsilon\to 0}\int_{B_2^+}\sum_{i\ge 2}a^{ij}D_ju D_i\xi\eta(x^1/\varepsilon) \,dx=0.
$$
It then follows from \eqref{eq21.06} that
$$
\int_{|y|<2}a^{11}D_1u(0,y) \xi(y)\,dy=0.
$$
Since $\xi$ is an arbitrary function in $C_0^\infty(\{|y|<2\})$
and $a^{11}\ge \delta$, we obtain that $w_1=D_1 u=0$ on
$\Gamma_2$. Next, for any $\phi\in C_0^\infty(B_2\cap \bar
\bR^d_+)$ such that $\phi=0$ on $\Gamma_2$, integrating by parts
gives
\begin{align*}
\int_{B_2^+}a^{ij}D_j w_1 D_i\phi\,dx
&=-\int_{\Gamma_2}a^{ij}D_j u D_i\phi-\int_{B_2^+}a^{ij}D_j u D_{1i}\phi\,dx\\
&=-\int_{\Gamma_2}a^{1j}D_j u D_1\phi=0.
\end{align*}
Here in the second equality we used the fact that $u$ satisfies $\cL u=0$ in $B_2^+$
with the conormal derivative condition
$a^{11}D_1 u=0$ on $\Gamma_2$ as well as $D_j\phi=0$ on $\Gamma_2$ for $j\ge 2$. In the third equality we used $a^{1j}=0$ for $j\ge 2$ and $D_1 u=0$ on $\Gamma_2$. This complete the proof of the first claim. The second claim is obvious. Indeed, for any $\phi\in C_0^\infty(B_2\cap \bar \bR^d_+)$ and any $k=3,\ldots,d$, by using integration by parts we have
\begin{equation*}
\int_{B_2^+}a^{ij}D_j w_k D_i\phi\,dx
=-\int_{B_2^+}a^{ij}D_j u D_{ik}\phi\,dx=0.
\end{equation*}

Now by the DeGiorgi--Nash--Moser estimate,
\begin{equation}
                                        \label{eq14.38}
\|D_1u\|_{C^{\alpha_1}(B_1^+)}
\le N\|D_1u\|_{L_2(B_2^+)},
\end{equation}
and
\begin{equation}
                                        \label{eq14.49}
\|D_{x''}u\|_{C^{\alpha_1}(B_1^+)}
\le N\|D_{x''}u\|_{L_2(B_2^+)}
\end{equation}
for some $\alpha_1=\alpha_1(d,\delta)\in (0,1)$. Moreover, by the
reverse H\"older's inequality and the local $L_2$ estimate, for
some $p_0=p_0(d,\delta)>2$, we have
\begin{equation}
                                        \label{eq14.47}
\|DD_1 u\|_{L_{p_0}(B_1^+)}\le N(d,\delta)\|D_1 u\|_{L_{2}(B_{2}^+)},
\end{equation}
and
\begin{equation}
                                        \label{eq20.24}
\|DD_{x''} u \|_{L_{p_0}(B_1^+)}
\le N(d,\delta)\|D_{x''}u\|_{L_{2}(B_{2}^+)}.
\end{equation}
Thanks to the equation of $u$, we have
$$
D_2(a^{2j}D_j u)=-\sum_{k\neq 2}D_k(a^{kj}D_j u)=-\sum_{k\neq 2}a^{kj}D_{kj}u,
$$
which together with \eqref{eq14.47} and \eqref{eq20.24} yields
\begin{equation*}
                                           % \label{eq20.33}
\|D_2 (a^{2j}D_j u)\|_{L_{p_0}(B_1^+)}+\|D_1 (a^{2j}D_j u)\|_{L_{p_0}(B_1^+)}\le N\|D u\|_{L_{2}(B_{2}^+)}.
\end{equation*}
Then by using a standard scaling argument, for any $0<r<R\le 2$, we have
\begin{equation}
                                            \label{eq20.33bb}
\|D_{x'} (a^{2j}D_j u)\|_{L_{p_0}(B_r^+)}\le N\|D
u\|_{L_2(B_{R}^+)},
\end{equation}
where $N=N(d,\delta,r,R)>0$.

For  any integer $l=1,2,\ldots, [d/2]+1$, take a increasing sequence
$3/2<r_1<r_2<\ldots<r_{l+1}<2$. Since $D_{x''}^l
u$ satisfies the same equation as $u$ with the same boundary
condition, by applying \eqref{eq20.33bb} to $D_{x''}^l u$ and
using the local $L_2$ estimate repeatedly, we get
\begin{multline}
                                        \label{eq20.24x}
\|D_{x''}^l D_{x'} (a^{2j}D_j
u)\|_{L_{p_0}(B_{3/2}^+)}+\|D_{x''}^l(a^{2j}D_j
u)\|_{L_{p_0}(B_{3/2}^+)}
\le N\|D D_{x''}^l u \|_{L_{2}(B_{r_1}^+)}\\
\le N\|D D_{x''}^{l-1} u \|_{L_{2}(B_{r_2}^+)}
\le\ldots
\le N(d,\delta,k)\|Du\|_{L_{2}(B_{2}^+)}.
\end{multline}
Since $\alpha_2:=1-2/p_0>0$ and
$$
B_1^+\subset \tilde B_1^+\times\hat B_1\subset B_{3/2}^+,
$$
it follows from \eqref{eq20.24x} and Lemma \ref{lem0} that
\begin{equation}
                                    \label{eq15.01}
\|a^{2j}D_j u\|_{C^{\alpha_2}(B_1^+)}\le N\|D u\|_{L_{2}(B_{2}^+)}.
\end{equation}
Collecting \eqref{eq14.38}, \eqref{eq14.49}, and \eqref{eq15.01} gives \eqref{eq20.01} with $\alpha=\min\{\alpha_1,\alpha_2\}$.

Next we treat the case $\lambda>0$ by adapting an idea by S.
Agmon. Introduce a new variable $y\in \bR$ and define
$$
v(x,y)=u(x)\big(\cos(\lambda^{1/2} y)+\sin(\lambda^{1/2} y)\big).
$$
For $r>0$, denote $\bB_r$
and $\mathbf{B}_r^+$ to be the $d+1$ dimensional ball and half ball
with radius $r$ centered at the origin. Then $v$ satisfies
$$
\tilde \cL v=0 \quad \text{in}\,\,\bB_2^+
$$
with the conormal boundary condition $a^{11}D_1 v=0$ on $\bB_2\cap
\partial\bR^{d+1}_+$. Here $\tilde \cL=\cL+D_y^2\in \bL_2$ is a
divergence elliptic operator in $\bR^{d+1}$. This reduces the
problem to the case $\lambda=0$. By the proof above, we have
$$
\|D_1 v\|_{C^\alpha(\bB_1^+)}+\|D_{x''} v\|_{C^\alpha(\bB_1^+)}+\|D_y v\|_{C^\alpha(\bB_1^+)}+\|a^{2j}D_j v\|_{\tilde C^\alpha(\bB_1^+)}
$$
\begin{equation}
                                        \label{eq20.01z}
\le N\|Dv\|_{L_2(\bB_2^+)}.
\end{equation}
Now we observe that $D_1u$, $D_{x''}u$, $\lambda^{1/2} u$, and
$a^{2j}D_j u$ are restrictions of $D_1v$, $D_{x''}v$, $D_y v$, and
$a^{2j}D_j v$ on the hyperplane $y=0$, respectively. Therefore,
the left-hand side of \eqref{eq20.01} is less than or equal to
that of \eqref{eq20.01z}. On the other hand, $Dv$ is a linear
combination of
$$
Du(x)\big(\cos(\lambda^{1/2} y)+\sin(\lambda^{1/2} y)\big),\quad
\lambda^{1/2} u(x)\big(-\sin(\lambda^{1/2} y)+\cos(\lambda^{1/2}
y)\big).
$$
So the
right-hand side of \eqref{eq20.01z} is less than that of
\eqref{eq20.01}. The lemma is proved.
\end{proof}

For operators $\cL\in \bL_1$, there is a similar estimate under the Dirichlet boundary condition.
\begin{lemma}
                                    \label{lem2.2}
Let $\lambda\ge 0$ be a constant and $\cL\in \bL_1$. Suppose that $u\in C^\infty(\bar B_2^+)$ satisfies
$$
\cL u-\lambda u=0\quad \text{in}\,\,B_2^+
$$
with the Dirichlet boundary condition $u=0$ on $\Gamma_2$. Then \eqref{eq20.01} holds
for some constants $\alpha\in (0,1)$ and $N>0$ depending only on
$d$ and $\delta$.
\end{lemma}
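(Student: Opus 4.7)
The plan is to mirror the proof of Lemma~\ref{lem2.1} in a dual manner. The structural role played there by the assumption $a^{1j}\equiv 0$ ($j\geq 2$) is now taken by the assumption $a^{j1}\equiv 0$ ($j\geq 2$) defining $\bL_1$, and the roles of Dirichlet and conormal boundary conditions on the components of $Du$ are swapped. Concretely, I claim that under the hypotheses of the lemma, \emph{(i)} the tangential derivatives $w_k:=D_k u$, $k\geq 3$, satisfy $\cL w_k=0$ in $B_2^+$ with the Dirichlet condition $w_k=0$ on $\Gamma_2$; and \emph{(ii)} the normal derivative $w_1:=D_1 u$ satisfies $\cL w_1=0$ in $B_2^+$ with the conormal boundary condition $a^{1j}D_j w_1=0$ on $\Gamma_2$.

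Claim \emph{(i)} is immediate, since $D_k$ commutes with multiplication by $a^{ij}(x^2)$ for $k\geq 3$ and tangential derivatives of $u$ vanish on $\Gamma_2$. For claim \emph{(ii)}, I would test against an arbitrary $\phi\in C_0^\infty(B_2)$ which need \emph{not} vanish on $\Gamma_2$. Using $D_1 a^{ij}=0$ to rewrite $a^{ij}D_j w_1=D_1(a^{ij}D_j u)$ and integrating by parts in $x^1$ gives
\begin{align*}
\int_{B_2^+}a^{ij}D_j w_1\,D_i\phi\,dx
&=-\int_{\Gamma_2}\bigl(a^{ij}D_j u\,D_i\phi\bigr)\big|_{x^1=0}\,dx''\\
&\quad -\int_{B_2^+}a^{ij}D_j u\,D_iD_1\phi\,dx.
\end{align*}
For the bulk integral on the right, smoothness of $u$ combined with $\cL u=0$ yields the natural identity $\int_{B_2^+}a^{ij}D_j u\,D_i\psi\,dx=-\int_{\Gamma_2}a^{1j}D_j u\,\psi\,dx''$ for every $\psi\in C_0^\infty(B_2)$; applied with $\psi=D_1\phi$ and using that $u=0$ on $\Gamma_2$ forces $D_j u=0$ there for $j\geq 2$, it collapses to $-\int_{\Gamma_2}a^{11}D_1 u\,D_1\phi\,dx''$. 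The surface integral in the display, after invoking the same vanishing together with the defining relation $a^{i1}\equiv 0$ for $i\geq 2$ of $\bL_1$, also reduces to $-\int_{\Gamma_2}a^{11}D_1 u\,D_1\phi\,dx''$. The two contributions cancel, establishing claim \emph{(ii)}.

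With these two homogeneous boundary value problems in hand, the remainder of the argument follows Lemma~\ref{lem2.1} closely. The DeGiorgi-Nash-Moser theorem applied to $w_1$ (conormal) and to each $w_k$, $k\geq 3$ (Dirichlet), furnishes $C^{\alpha_1}$ bounds on $D_1 u$ and $D_{x''}u$, and the reverse H\"older inequality upgrades these to $L_{p_0}$-estimates, $p_0>2$, on $Dw_1$ and $Dw_k$. For the composite $a^{2j}D_j u$ I would exploit the identities
\begin{equation*}
D_1(a^{2j}D_j u)=\sum_{j\geq 2}a^{2j}D_1 D_j u,\qquad
D_2(a^{2j}D_j u)=-\sum_{i\neq 2}a^{ij}D_i D_j u,
\end{equation*}
which come respectively from the $\bL_1$ relation $a^{21}=0$ and from $\cL u=0$; the right-hand sides involve only derivatives of $w_1$ and of $w_k$ for $k\geq 3$, all already controlled in $L_{p_0}$. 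Iterating in the tangential variables $x''$ (noting that $D_{x''}^l u$ again solves the same Dirichlet problem for $\cL$) and invoking Lemma~\ref{lem0} then produce the $C^{\alpha_2}$ bound on $a^{2j}D_j u$. Combining these estimates delivers \eqref{eq20.01} in the case $\lambda=0$ with $\alpha=\min\{\alpha_1,\alpha_2\}$.

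Finally, the case $\lambda>0$ reduces to $\lambda=0$ by the same Agmon-type trick as in Lemma~\ref{lem2.1}: setting $v(x,y)=u(x)\bigl(\cos(\lambda^{1/2}y)+\sin(\lambda^{1/2}y)\bigr)$ produces a solution of $(\cL+D_y^2)v=0$ on the $(d+1)$-dimensional half ball $\mathbf{B}_2^+$ with the Dirichlet condition on $\mathbf{B}_2\cap\partial\bR^{d+1}_+$, and the augmented operator still lies in $\bL_1$ once $y$ is placed in the last coordinate. Restricting the resulting estimate for $v$ to the hyperplane $y=0$ gives the desired bound, with $\lambda^{1/2}u$ appearing as $D_y v|_{y=0}$. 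The main technical obstacle is the integration-by-parts identification of the conormal condition satisfied by $w_1$; once the cancellation driven by $a^{i1}\equiv 0$ for $i\geq 2$ is noticed, the rest is a faithful adaptation of the proof of Lemma~\ref{lem2.1}.
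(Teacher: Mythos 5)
Your proposal is correct and follows the same architecture as the paper's proof: the identical claims that $D_k u$ ($k\ge 3$) solves the Dirichlet problem and $w_1=D_1 u$ solves the conormal problem for $\cL$, the same DeGiorgi--Nash--Moser plus reverse H\"older estimates, the same use of $\cL u=0$ to control $D_{x'}(a^{2j}D_j u)$ followed by iteration in $x''$ and Lemma~\ref{lem0}, and the same Agmon-type reduction for $\lambda>0$. The only point of divergence is how the conormal condition on $w_1$ is verified: you integrate by parts against a test function $\phi$ that does not vanish on $\Gamma_2$ and invoke the "natural" identity $\int_{B_2^+}a^{ij}D_j u\,D_i\psi\,dx=-\int_{\Gamma_2}a^{1j}D_j u\,\psi$ for arbitrary $\psi$, which relies on the strong form of the equation up to the boundary (justified here since $u\in C^\infty(\bar B_2^+)$ and $a^{ij}=a^{ij}(x^2)$), whereas the paper decomposes $\phi=\phi_1+\phi_2$ precisely so that the weak formulation is tested only against $D_1\phi_1$, which vanishes on $\Gamma_2$; both computations produce the same cancellation driven by $D_j u=0$ on $\Gamma_2$ for $j\ge 2$ and $a^{i1}\equiv 0$ for $i\ge 2$.
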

\begin{proof}
As in the proof of Lemma \ref{lem2.1}, it suffices to prove
\eqref{eq20.01} when $\lambda=0$. We define $w_j,j=1,\ldots,d$ as
before. It is easily seen that now $w_j,j>2$ satisfies $\cL w_j=0$
in $B_2^+$ and $w_j=0$ on $\Gamma_2$. We claim that $w_1$
satisfies $\cL w_1=0$ in $B_2^+$ with the conormal derivative
boundary condition $a^{1j}D_j w_1=0$ on $\Gamma_2$. For any
$\phi\in C_0^\infty(B_2\cap \bar \bR^d_+)$, we decompose it as
$$
\phi=\phi_1+\phi_2,\quad
\phi_1=\eta(x^1/\varepsilon)\phi(0,x^2,\ldots,x^d)+\big(1-\eta(x^1/\varepsilon)\big)\phi,\quad
$$
$$
\phi_2=\eta(x^1/\varepsilon)\big(\phi-\phi(0,x^2,\ldots,x^d)\big),
$$
where $\eta$ is the function defined in the proof of Lemma \ref{lem2.1} and $\varepsilon>0$ is sufficiently small such that $\phi_1\in C_0^\infty(B_2\cap \bar \bR^d_+)$. Integration by parts gives
\begin{align}
\int_{B_2^+}a^{ij}D_j w_1 D_i\phi_1\,dx
&=-\int_{\Gamma_2}a^{ij}D_j u D_{i}\phi_1
-\int_{B_2^+}a^{ij}D_j u D_{i}(D_1\phi_1)\,dx\nonumber\\
&=-\int_{\Gamma_2}a^{ij}D_j u D_{i}\phi_1=0
                                \label{eq00.24}
\end{align}
Here in the second equality we used the fact that  $u$ satisfies $\cL u=0$ in $B_2^+$ with the Dirichlet boundary condition $u=0$ on $\Gamma_2$ and $D_1\phi_1=0$ on $\Gamma_2$. In the third equality, we used $D_j u=0,j\ge 2$ on $\Gamma_2$, $a^{i1}=0$ for $i\ge 2$ and $D_1\phi_1=0$ on $\Gamma_2$. Note that $\phi_2$ vanishes for any $x^1>\varepsilon$ and $|D\phi_2|\le N$ where $N$ is independent of $\varepsilon$. Therefore,
\begin{equation}
                        \label{eq00.25}
\lim_{\varepsilon\to 0}\int_{B_2^+}a^{ij}D_j w_1 D_{i}\phi_2\,dx
=\lim_{\varepsilon\to 0}\int_{B_2^+\cap\{x^1\le \varepsilon\}}a^{1j}D_{j1} u D_{i}\phi_2\,dx=0.
\end{equation}
Combining \eqref{eq00.24} and \eqref{eq00.25}, we conclude
$$
\int_{B_2^+}a^{ij}D_j w_1 D_i\phi\,dx=0,
$$
which completes the proof of the claim.
Therefore, we can still use the
DeGiorgi--Nash--Moser estimate to bound the first two terms on the
left-hand side of \eqref{eq20.01}. The third term on the left-hand
side of \eqref{eq20.01} is estimated in exactly the same way as in
the proof of Lemma \ref{lem2.1}. We omit the details.
\end{proof}

\begin{remark}
                                    \label{rem2.4}
The results of Lemmas \ref{lem2.1} and \ref{lem2.2} still hold
true when $B_1^+$ and $B_2^+$ are replaced with $B_1(x_0)$ and
$B_2(x_0)$ provided that $B_2(x_0)\subset \bR^d_+$. For these
interior estimates, the condition $\cL\in \bL_2$ or $\cL\in \bL_1$
is not needed, since there is no boundary condition in this case.
\end{remark}

Denote
$$
U=U(x):=|D_1 u|+|D_{x''}u|+|a^{2j}D_j u|+\lambda^{1/2}|u|.
$$
Because $a^{22}\ge \delta>0$ and $a^{ij}$ are bounded, it is easily seen that
\begin{equation}
                                        \label{eq19.20.22}
N^{-1} \big(|Du|+\lambda^{1/2} |u|\big) \le U\le
N\big(|Du|+\lambda^{1/2} |u|\big),
\end{equation}
where $N=N(d,\delta)>0$.
\begin{corollary}
                                    \label{cor2.3}
Let $\lambda> 0$ be a constant, $\cL\in \bL_2$, and
$g=(g^1,\ldots,g^d),f\in C^\infty_{\text{loc}}(\bar\bR^d_+)$. Suppose that
$u\in C^\infty_{\text{loc}}(\bar \bR^d_+)$ satisfies
$$
\cL u-\lambda u=\Div g+f\quad \text{in}\,\,\bR^d_+
$$
with the conormal derivative boundary condition $a^{11}D_1 u=g^1$
on $\partial\bR^d_+$. Then, for any $r>0$, $\kappa\ge 32$ and
$x_0\in \bar \bR^d_+$, we have
\begin{multline}
                                                  \label{eq17.21.22}
\dashint_{B^{+}_r(x_0)}\dashint_{B^{+}_r(x_0)}|U (x)-
U (y)|^2\,dx\,dy\\
\leq N\kappa^d \dashint_{B^{+}_{\kappa r} (x_0)}
\big(|g|^2+\lambda^{-1}f^2\big) \,dx
 +N\kappa^{-2\alpha} \dashint_{B^{+}_{\kappa r}(x_0)}U^2\,dx,
\end{multline}
where $\alpha$ is the constant from Lemma \ref{lem2.1} and the
constant $N$ depends only on $d$ and $\delta$.

The same estimate
holds for $\cL\in \bL_1$ if the conormal derivative boundary
condition is replaced with the Dirichlet boundary condition $u=0$
on $\partial\bR_+^d$.
\end{corollary}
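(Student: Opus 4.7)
The plan is a standard freezing/decomposition argument: isolate the inhomogeneity into an auxiliary piece $u_{2}$ that can be controlled by an energy identity, and apply the boundary H\"older estimate of Lemma~\ref{lem2.1} (respectively Lemma~\ref{lem2.2}) to the homogeneous remainder $u_{1}=u-u_{2}$. Set $R=\kappa r$. When $B_{R}(x_{0})\subset\bR^{d}_{+}$ we are in the pure interior setting covered by Remark~\ref{rem2.4}, so we may assume $x_{0}\in\partial\bR^{d}_{+}$; translating $x_{0}$ to the nearest boundary point, if needed, only worsens constants since the original half-ball sits inside a comparable half-ball centered on the boundary.

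First I would define $u_{2}\in W^{1}_{2}(B^{+}_{R}(x_{0}))$ as the unique weak solution to $\cL u_{2}-\lambda u_{2}=\Div g+f$ in $B^{+}_{R}(x_{0})$, with $u_{2}=0$ on $\partial B_{R}(x_{0})\cap\bR^{d}_{+}$ and $a^{11}D_{1}u_{2}=g^{1}$ on $\Gamma_{R}(x_{0})$; existence follows by Lax--Milgram since $\lambda>0$. Testing the weak formulation with $u_{2}$ itself (admissible because $u_{2}$ vanishes on the curved part of the boundary) and applying Young's inequality yields
\begin{equation*}
\int_{B^{+}_{R}(x_{0})}\bigl(|Du_{2}|^{2}+\lambda u_{2}^{2}\bigr)\,dx\le N\int_{B^{+}_{R}(x_{0})}\bigl(|g|^{2}+\lambda^{-1}f^{2}\bigr)\,dx.
\end{equation*}
The remainder $u_{1}:=u-u_{2}$ then satisfies $\cL u_{1}-\lambda u_{1}=0$ in $B^{+}_{R}(x_{0})$ with conormal condition $a^{11}D_{1}u_{1}=0$ on $\Gamma_{R}(x_{0})$. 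Rescaling Lemma~\ref{lem2.1} from $B^{+}_{2}$ to $B^{+}_{R}(x_{0})$ -- legitimate because $a^{ij}(x^{2})$ retains its structure, ellipticity, and independence of $(x^{1},x'')$ under dilation -- yields for each $\Phi\in\{D_{1}u_{1},\,D_{x''}u_{1},\,a^{2j}D_{j}u_{1},\,\lambda^{1/2}u_{1}\}$ the H\"older bound
\begin{equation*}
[\Phi]_{C^{\alpha}(B^{+}_{R/2}(x_{0}))}^{2}\le NR^{-2\alpha}\dashint_{B^{+}_{R}(x_{0})}U_{1}^{2}\,dx,
\end{equation*}
where $U_{1}$ denotes the quantity $U$ evaluated for $u_{1}$ and I have used \eqref{eq19.20.22} to identify $|Du_{1}|^{2}+\lambda u_{1}^{2}$ with $U_{1}^{2}$.

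Since $\kappa\ge 32$, $B^{+}_{r}(x_{0})\subset B^{+}_{R/2}(x_{0})$ and $|x-y|\le 2r$ for $x,y\in B^{+}_{r}(x_{0})$. Applying the pointwise inequality $\bigl||D_{i}u(x)|-|D_{i}u(y)|\bigr|\le|D_{i}u_{1}(x)-D_{i}u_{1}(y)|+|D_{i}u_{2}(x)|+|D_{i}u_{2}(y)|$ to each of the four absolute-value components making up $U$, and using the H\"older estimate above together with $|x-y|^{2\alpha}\le(2r)^{2\alpha}$, I obtain
\begin{equation*}
|U(x)-U(y)|^{2}\le N\kappa^{-2\alpha}\dashint_{B^{+}_{R}(x_{0})}U_{1}^{2}\,dx+N\bigl(U_{2}(x)^{2}+U_{2}(y)^{2}\bigr).
\end{equation*}
Averaging over $x,y\in B^{+}_{r}(x_{0})$ and using $|B^{+}_{r}|^{-1}\int_{B^{+}_{R}}=\kappa^{d}\dashint_{B^{+}_{R}}$ to convert the $U_{2}$ integrals, then combining with $U_{1}\le N(U+U_{2})$ pointwise and the energy estimate on $U_{2}$, gives \eqref{eq17.21.22}. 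The case $\cL\in\bL_{1}$ with the Dirichlet condition is entirely analogous: take $u_{2}$ to vanish on the full boundary of $B^{+}_{R}(x_{0})$ and use Lemma~\ref{lem2.2} in place of Lemma~\ref{lem2.1}. The main delicacy is arranging the split so that $u_{1}$ inherits the correct homogeneous boundary condition on $\Gamma_{R}(x_{0})$, which forces the mixed Dirichlet/conormal problem for $u_{2}$; once the boundary terms in the integration by parts are handled cleanly, the remaining book-keeping with powers of $R$ and $\kappa$ is routine.
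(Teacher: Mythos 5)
Your overall strategy---split $u = u_1 + u_2$ with $u_2$ carrying the inhomogeneity (energy estimate from coercivity) and $u_1$ solving the homogeneous equation so that Lemma~\ref{lem2.1} applies---is the same as the paper's. However, your choice of auxiliary problem differs, and this introduces two gaps.

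\textbf{Corner regularity.} You solve for $u_2$ on $B_R^+(x_0)$ with the conormal condition on $\Gamma_R(x_0)$ and the Dirichlet condition on the spherical part $\partial B_R(x_0) \cap \bR^d_+$. Even after mollifying $a^{ij}$ (a step you do not mention, but which the paper performs first), a mixed Dirichlet/conormal boundary value problem is generically non-smooth at the edge $\overline{\Gamma_R}\cap\partial B_R(x_0)$ where the two boundary types meet. Thus $u_2\notin C^\infty(\bar B_R^+(x_0))$ in general, and neither is $u_1=u-u_2$. But Lemma~\ref{lem2.1} is stated for $u_1\in C^\infty(\bar B_2^+)$, a closed half-ball. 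As written you cannot rescale Lemma~\ref{lem2.1} to $B_R^+(x_0)$; you would need to apply it on a strictly smaller boundary-centered half-ball that stays away from the edge, which you haven't set up. The paper sidesteps this cleanly by introducing a cutoff $\eta$ and solving the auxiliary problem $\cL w-\lambda w=\Div(\eta g)+\eta f$ on the \emph{entire half-space} $\bR^d_+$ with the conormal condition only (no Dirichlet portion, no corner), so that classical elliptic theory gives $w\in C^\infty(\bar\bR^d_+)$. This is exactly why the cutoff appears.

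\textbf{Centering.} You claim one may simply translate $x_0$ to its boundary projection $\bar x_0$ because it ``only worsens constants.'' But \eqref{eq17.21.22} involves balls centered at $x_0$, both the small one and the large one. Replacing $B_r^+(x_0)$ by a boundary-centered $B_\rho^+(\bar x_0)\supset B_r^+(x_0)$ while keeping $B_{2\rho}^+(\bar x_0)\subset B_{\kappa r}^+(x_0)$ requires $\operatorname{dist}(x_0,\partial\bR^d_+)$ to be small relative to $\kappa r$; for intermediate distances one must fall back on an interior estimate (Remark~\ref{rem2.4}). The paper makes this dichotomy explicit: after normalizing $\kappa r=8$ and shifting so $x_0=(x_0^1,0,\ldots,0)$, it treats $x_0^1<1$ (boundary case, decomposition centered at the origin, inclusion chain $B_r(x_0)\subset B_2\subset B_6\subset B_{\kappa r}(x_0)$) and $x_0^1\ge 1$ (interior case, using $B_{\kappa r/8}(x_0)\subset\bR^d_+$) separately; the threshold $\kappa\ge 32$ is calibrated precisely so both cases have enough room. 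Your sketch elides this, and the naive translation argument does not close for all $\kappa\ge 32$ and all positions of $x_0$.

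Both gaps are fixable by following the paper's structure---cutoff data, global solve, boundary-centered decomposition---but as presented the proof is incomplete.
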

\begin{proof}
By standard mollifications, we may assume $a^{ij}\in C^\infty$.
Dilations show that it suffices to prove the lemma only for
$\kappa r=8$. After a shift of the coordinates, we may assume
$x_0=(x_0^1,0,\ldots,0)$. We consider two cases: i) $x_0^1<1$,
i.e., when $x_0$ is close to the boundary $\partial\bR^d_+$;  ii)
$x_0^1\ge 1$, i.e., when $x_0$ is away from the boundary.

{\em Case i).}
Since $r=8/\kappa\le 1/4$, we have
\begin{equation}
                                    \label{eq15.22.55s}
B^{+}_r(x_0)\subset B^{+}_2 \subset B^{+}_6\subset B^{+}_{\kappa r}(x_0).
\end{equation}
%By using a standard density argument, we may assume $a^{ij}\in
%C^\infty(\bR)$.
Take a smooth cutoff function $\eta\in C_0^\infty(B_6)$ such that
$\eta\equiv 1$ in $B_4$ and $0\le \eta\le 1$  in $B_6$. By using
the classical $W^1_2$-solvability for divergence elliptic
equations, there exists a unique solution $w\in W^1_2(\bR^d_+)$ to
the equation
$$
\cL w-\lambda w=\Div(\eta g)+ \eta f\quad \text{in}\,\,\bR^d_+
$$
with the conormal derivative boundary condition $a^{11}D_1 u=\eta
g^1$ on $\partial \bR^d_+$. Moreover, we have
$$
\|D w\|_{L_2(\bR^d_+)}+\lambda^{1/2} \|w\|_{L_2(\bR^d_+)} \le
N(d,\delta)\big(\|\eta g\|_{L_2(\bR^d_+)}+\lambda^{-\frac 1
2}\|\eta f\|_{L_2(\bR^d_+)}\big),
$$
which implies that
\begin{align}
                                                  \label{eq15.22.38s}
\dashint_{B^{+}_r(x_0)}|D w|^2+\lambda w^2 \,dx
&\leq N\kappa^d \dashint_{B^{+}_{\kappa r}
(x_0)} |g|^2+\lambda^{-1}f^2 \,dx,\\
                                        \label{eq15.22.40s}
\dashint_{B^{+}_r(\kappa x_0)}| D w|^2+\lambda w ^2 \,dx &\leq N
\dashint_{B^{+}_{\kappa r}(x_0)} |g|^2+\lambda^{-1}f^2 \,dx.
\end{align}
By the classical elliptic theory, $w\in C^\infty(\bar\bR^d_+)$.
Now let $v=u-v\in C_b^{\infty}(\bar{B}_{\kappa r}^{+}(x_0))$, which clearly satisfies
$$
\cL v-\lambda v=0\quad \text{in}\,\,B_4^+
$$
and $a^{11}D_1 v=0$ on $\Gamma_4$. We define
$$
V=V(x):=|D_1 v|+|D_{x''}v|+|a^{2j}D_j v|+\lambda^{1/2}|v|.
$$
Recall that $r=8/\kappa$. By Lemma \ref{lem2.1}, the triangle inequality, and \eqref{eq15.22.55s},
\begin{align}
                                                  \label{eq15.22.57s}
&\dashint_{B^{+}_r(x_0)}\dashint_{B^{+}_r(x_0)}|V(x)-
 V(y)|^2 \,dx\,dy\nonumber\\
&\,\,\le Nr^{ 2\alpha}\big([D_1 v]_{C^\alpha(B_2^+)}
+[D_{x''} v]_{C^\alpha(B_2^+)}+[a^{2j}D_j v]_{C^\alpha(B_2^+)}
+\lambda^{1/2} [v]_{C^\alpha(B_2^+)}\big)^2 \nonumber\\
&\,\,\le N\kappa^{-2\alpha} \dashint_{B^{+}_{\kappa r}(x_0)}
 |  Dv|^2 +\lambda v^2\,dx\nonumber\\
&\,\,\le N\kappa^{-2\alpha} \dashint_{B^{+}_{\kappa r}(x_0)}
 V^2\,dx.
\end{align}
In the last inequality above, we used an inequality similar to \eqref{eq19.20.22} with $v$ and $V$ in place of $u$ and $U$.
Since $u=v+w$, combining \eqref{eq15.22.38s}, \eqref{eq15.22.40s}, \eqref{eq15.22.57s}, and the triangle inequality, we immediately get \eqref{eq17.21.22}.

{\em Case ii).} The proof of this case is similar and actually simpler. Recall that $\kappa r=8$ and $\kappa \ge 32$. We have
$$
B_r^+(x_0)=B_r(x_0)\subset B_{\kappa r/8}(x_0)\subset \bR^d_+.
$$
Now we take $\eta\in C_0^\infty(B_{\kappa r/8}(x_0)$ such that
$$
\eta\equiv 1\quad \text{in}\,\, B_{\kappa r/16}(x_0),\quad 0\le \eta\le 1\quad \text{in}\,\,B_{\kappa r/8}(x_0).
$$
We then follow the above proof and use the interior estimates instead of the boundary estimate (cf. Remark \ref{rem2.4}) to get
\begin{multline*}
                                       %           \label{eq17.21.22}
\dashint_{B_r(x_0)}\dashint_{B_r(x_0)}|U (x)-
U (y)|^2\,dx\,dy\\
\leq N\kappa^d \dashint_{B_{\kappa r/8}
(x_0)} |g|^2+\lambda^{-1}f^2 \,dx
 +N\kappa^{-2\alpha} \dashint_{B_{\kappa r/8}(x_0)}U^2\,dx,
\end{multline*}
which clearly yields \eqref{eq17.21.22}

%When $\lambda=0$, we take $\lambda_1>0$. Since
%$$
%\cL u-\lambda_1 u=\Div g+f-\lambda_1 u\quad \text{in} \,\,B_{\kappa r}^+(x_0),
%$$
%by the proof above, we have
%\begin{multline}
%                                                  \label{eq17.21.22}
%\dashint_{B^{+}_r(x_0)}\dashint_{B^{+}_r(x_0)}|U (x)-
%U (y)|^2\,dx\,dy\\
%\leq N\kappa^d \dashint_{B^{+}_{\kappa r}
%(x_0)} g^2+\lambda^{-1}(f^2-\lambda_1^2 u^2) \,dx
% +N\kappa^{-2\alpha} \dashint_{B^{+}_{\kappa r}(x_0)}U^2\,dx,
%\end{multline}
%where the constant $N$ is independent of $\lambda_1$.
%Taking the limit as $\lambda_1\to 0$ yields the desired estimate.

The proof of the last assertion is the same by using Lemma \ref{lem2.2} in place of Lemma \ref{lem2.1}. The details are thus omitted.
\end{proof}

In the measure
space $\bR^{d}_{+}$ endowed with the Borel $\sigma$-field and Lebesgue measure, we consider the filtration of dyadic cubes $\{\bC_{l},l\in\bZ\}$, where
$\bZ=\{0,\pm1,\pm2,...\}$ and $\bC_l$ is the collection of cubes
$$
(i_{1}2^{-l},(i_{1}+1)2^{-l}]\times...\times
(i_{d}2^{-l},(i_{d}+1)2^{-l}],
$$
where $i_{1},...,i_{d}\in\bZ,\,\,i_1\ge 0$. Let $\cC$ be the union of $\bC_l,l\in \bZ$.
Notice that if $x_0\in C\in \bC_l$, then for the smallest $r>0$
such that $C\subset B_{r}(x_0)$ we have
\begin{equation}
                                \label{eq20.20.20}
\dashint_{C} \dashint_{C}|g(y)-g(z)|
\,dy\,dz\leq N(d)
\dashint_{B^{+}_{r}(x_0)} \dashint_{B^{+}_{r}(x_0)}|g(y)-g(z)|
\,dy\,dz.
\end{equation}
On the other hand, for any $x_0\in \bar\bR^d_+$ and $r>0$, let $C\subset \cC$ be the smallest cube which contains $B_r^+(x_0)$. Then,
\begin{equation}
                                \label{eq20.20.20b}
\dashint_{B^{+}_{r}(x_0)} |g(y)|
\,dy\le N(d)\dashint_{C} |g(y)|
\,dy.
\end{equation}

For a function $g\in L_{1,\text{loc}}(\bR^{d}_+)$,
we define the maximal and sharp function of $g$ are given by
\begin{align*}
\cM  g (x) &= \sup_{C \in \cC, x\in C} \dashint_{C} |g(y)| \, dy,\\
g^{\#}(x) &= \sup_{C \in \cC, x \in C} \dashint_{C} \dashint_{C}|g(y) -
g(z)| \, dy\,dz.
\end{align*}
Let $p\in (1,\infty)$. By the Fefferman--Stein theorem on sharp
functions and the Hardy-Littlewood maximal function theorem, we
have
\begin{align}
                                    \label{eq20.20.41}
\| g \|_{L_p(\bR^{d}_+)} &\le N \| g^{\#} \|_{L_p(\bR^{d}_+)},\\
                                    \label{eq20.20.48}
\| \cM  g \|_{L_p(\bR^{d}_+)} &\le N \| g\|_{L_p(\bR^{d}_+)},
\end{align}
if $g \in L_p(\bR^{d}_+)$, where $N = N(d,p)>0$.

The following two propositions are the main results of this section.

\begin{proposition}
                                \label{prop0}
Suppose either $\cL\in \bL_1$ and $p\in (1,2]$ or $\cL\in \bL_2$
and $p\in [2,\infty)$. Let $g=(g^1,\ldots,g^d), f\in
C^\infty_0(\bar\bR^d_+)$. Then for any $\lambda> 0$ and $u\in
C^\infty_{\text{loc}}(\bar\bR^d_+)$ satisfying
\begin{equation}
                                            \label{eq22.22}
\cL u-\lambda u=\Div g+f\quad \text{in}\,\, \bR^d_+
\end{equation}
with the conormal derivative boundary condition $a^{1j}D_ju=g^1$
on $x^1=0$, we have
\begin{equation}
                \label{eq19.46}
\lambda\|u\|_{L_{p}(\bR^d_+)}+\lambda^{1/2}
\|Du\|_{L_{p}(\bR^d_+)}\le N\lambda^{1/2}\|g\|_{L_{p}(\bR^d_+)}
+N\|f\|_{L_{p}(\bR^d_+)},
\end{equation}
where $N=N(d,\delta,p)>0$. Furthermore, for $\lambda=0$ and $f\equiv 0$, we have
\begin{equation}
                \label{eq19.46b}
\|Du\|_{L_{p}(\bR^d_+)}\le N\|g\|_{L_{p}(\bR^d_+)}.
\end{equation}
\end{proposition}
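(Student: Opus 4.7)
The plan is to treat the two cases by two different methods: the $\cL\in\bL_2$, $p\in[2,\infty)$ case by Krylov's sharp-function machinery applied to Corollary~\ref{cor2.3}, and the $\cL\in\bL_1$, $p\in(1,2]$ case (to which Corollary~\ref{cor2.3} does not directly apply, since its $\bL_1$-version concerns Dirichlet rather than conormal BC) by dualizing against the $\bL_2$ result at the conjugate exponent. Throughout I would first assume $a^{ij}\in C^\infty$ via a standard mollification and pass to the limit at the end.

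For $\cL\in\bL_2$ and $p=2$, I would use the standard divergence-form energy estimate: multiply by $u$, integrate, use the conormal condition to cancel the boundary term against the boundary contribution from $\Div g$, and absorb via Young's inequality. For $\cL\in\bL_2$ and $p>2$, I would run Krylov's scheme. Fix $x_0\in\bar\bR^d_+$ and $C\in\cC$ with $x_0\in C$, and let $r>0$ be the smallest radius with $C\subset B^+_r(x_0)$. Combining \eqref{eq20.20.20}, \eqref{eq20.20.20b}, and Corollary~\ref{cor2.3} (after extracting a square root) yields the pointwise sharp-function bound
\[
U^{\#}(x_0)\le N\kappa^{d/2}\bigl((\cM|g|^2)^{1/2}(x_0)+\lambda^{-1/2}(\cM f^2)^{1/2}(x_0)\bigr)+N\kappa^{-\alpha}(\cM U^2)^{1/2}(x_0).
\]
Since $p>2$, the Fefferman--Stein inequality \eqref{eq20.20.41} and the Hardy--Littlewood inequality \eqref{eq20.20.48} applied with exponent $p/2>1$ give
\[
\|U\|_{L_p(\bR^d_+)}\le N\kappa^{d/2}\bigl(\|g\|_{L_p(\bR^d_+)}+\lambda^{-1/2}\|f\|_{L_p(\bR^d_+)}\bigr)+N\kappa^{-\alpha}\|U\|_{L_p(\bR^d_+)}.
\]
Choosing $\kappa$ large enough to absorb the last term and invoking \eqref{eq19.20.22} yields \eqref{eq19.46}. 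The case $\lambda=0$, $f\equiv 0$ that produces \eqref{eq19.46b} is handled by the identical argument with the $\lambda^{-1/2}f$-term simply absent.

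For $\cL\in\bL_1$ and $p\in(1,2]$ I would dualize. Since $\cL^*\in\bL_2$ and $a^{j1}=0$ for $j\ge 2$, the conormal derivative attached to $\cL^*$ reduces to the pure normal derivative $a^{11}D_1 v$. Given $\phi\in C_0^\infty(\bar\bR^d_+)$, let $v$ be the smooth solution of $\cL^* v-\lambda v=\phi$ in $\bR^d_+$ with homogeneous adjoint conormal BC $a^{11}D_1 v=0$ on $\partial\bR^d_+$; the first case applied to $\cL^*$ at the conjugate exponent $p'\in[2,\infty)$ gives $\lambda\|v\|_{L_{p'}}+\lambda^{1/2}\|Dv\|_{L_{p'}}\le N\|\phi\|_{L_{p'}}$. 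Two integrations by parts, using the conormal condition $a^{1j}D_j u=g^1$ for $u$ together with $a^{11}D_1 v=0$ for $v$, cancel all boundary contributions on $\Gamma$ and produce the duality identity
\[
\int_{\bR^d_+} u\phi\,dx=-\int_{\bR^d_+} g\cdot Dv\,dx+\int_{\bR^d_+} fv\,dx.
\]
Hölder's inequality and taking the supremum over $\phi$ with $\|\phi\|_{L_{p'}}=1$ yields $\lambda\|u\|_{L_p}\le N(\lambda^{1/2}\|g\|_{L_p}+\|f\|_{L_p})$. The bound on $\lambda^{1/2}\|Du\|_{L_p}$ is obtained analogously, by testing $\int Du\cdot\psi\,dx$ for $\psi\in C_0^\infty(\bar\bR^d_+)$ using a $v$ that solves $\cL^* v-\lambda v=\Div\psi$ under the inhomogeneous adjoint conormal $a^{11}D_1 v=\psi^1$; the boundary terms again telescope and a parallel duality identity results. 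The case $\lambda=0$, $f\equiv 0$ is handled in the same way.

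The main obstacle is the bookkeeping of boundary integrals in the duality step: for $\cL\in\bL_1$ the conormal $a^{1j}D_j u$ genuinely mixes normal and tangential derivatives (since $a^{1j}$ for $j\ge 2$ need not vanish in $\bL_1$), whereas for $\cL^*\in\bL_2$ it collapses to the pure normal $a^{11}D_1 v$; the choice of adjoint BC must be matched to the primal one so that each pair of integrations by parts cancels every $\int_\Gamma$-contribution exactly. A secondary technical point is guaranteeing smoothness of the adjoint solution $v$, which is ensured by the preliminary mollification of $a^{ij}$ together with classical theory for smooth-coefficient divergence-form equations.
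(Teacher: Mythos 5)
Your proposal is correct and follows essentially the same strategy as the paper: the sharp-function/maximal-function argument built on Corollary~\ref{cor2.3} handles $\cL\in\bL_2$, $p\in[2,\infty)$, and the $\cL\in\bL_1$, $p\in(1,2]$ case is reduced to it by duality against $\cL^*\in\bL_2$ at the conjugate exponent (which the paper dispatches in one sentence and you work out explicitly, correctly matching the conormal boundary terms so that they cancel). The only small point worth flagging is that Corollary~\ref{cor2.3} is stated for $\lambda>0$, so the paper obtains \eqref{eq19.46b} for $\lambda=0$, $f\equiv 0$ not by running the same argument with the $f$-term dropped, but by applying \eqref{eq19.46} with $\lambda_1>0$ to $\cL u-\lambda_1 u=\Div g-\lambda_1 u$, dividing by $\lambda_1^{1/2}$, and letting $\lambda_1\to 0$.
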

\begin{proof}
Again we may assume $a^{ij}\in C^\infty$. 
By a duality argument, we only need to consider the case when
$\cL\in \bL_2$ and $p\in [2,\infty)$. The result is classical if
$p=2$. In the sequel, we suppose $p>2$. Since $\cL\in \bL_2$, the
conormal derivative condition becomes $D_1 u=g^1$ on $x^1=0$. Now
due to \eqref{eq20.20.20}, \eqref{eq20.20.20b}, and Corollary
\ref{cor2.3}, we obtain a pointwise estimate for $U$:
\begin{equation}
                                        \label{eq20.20.33}
U^\#(x_0)\le N\kappa^{\frac
d2}\Big(\cM\big(|g|^2+\lambda^{-1}f^2\big)(x_0)\Big)^{\frac 12}+
N\kappa^{-\alpha}\Big(\cM\big(U^2\big)(x_0)\Big)^{\frac 12}
\end{equation}
for any $x_0\in \bar\bR^d_+$. It follows from \eqref{eq20.20.33}, \eqref{eq20.20.41}, and \eqref{eq20.20.48} that
\begin{align*}
\|U&\|_{L_p(\bR^d_+)}\le N\big\|U^\#\big\|_{L_p(\bR^d_+)}\\
&\le N\kappa^{\frac
d2}\big\|\cM\big(|g|^2+\lambda^{-1}f^2\big)\big\|^{1/2}_{L_{p/2}(\bR^d_+)}
+N\kappa^{-\alpha}\big\|\cM\big(U^2\big)\big\|_{L_{\frac p2}(\bR^d_+)}^{\frac 1 2}\\
&\le N\kappa^{\frac d2}\big\||g|^2+\lambda^{-1}f^2\big\|^{\frac
12}_{L_{\frac p2}(\bR^d_+)} +N\kappa^{-\alpha}\big\|U^2\big\|_{L_{\frac
p2}(\bR^d_+)}^{\frac 12},
\end{align*}
which immediately yields \eqref{eq19.46} upon taking $\kappa$ sufficiently large.
Next we treat the case when $\lambda=0$ and $f\equiv 0$. Note that for any $\lambda_1>0$, we have
$$
\cL u-\lambda_1 u=\Div g-\lambda_1 u\quad \text{in}\,\, \bR^d_+.
$$
From \eqref{eq19.46}, we have
\begin{equation}
                \label{eq19.46c}
\lambda_1\|u\|_{L_{p}(\bR^d_+)}+\lambda_1^{1/2}
\|Du\|_{L_{p}(\bR^d_+)}\le N\lambda^{1/2}_1\|g\|_{L_{p}(\bR^d_+)}
+N\lambda_1\|u\|_{L_{p}(\bR^d_+)},
\end{equation}
where $N$ is independent of $\lambda_1$. Dividing both sides of
\eqref{eq19.46c} by $\lambda^{1/2}_1$ and taking the limit as
$\lambda_1\to 0$ give \eqref{eq19.46b}. This completes the proof
of the theorem.
\end{proof}

\begin{proposition}
                                \label{prop1}
Suppose either $\cL\in \bL_1$ and $p\in [2,\infty)$ or $\cL\in
\bL_2$ and $p\in (1,2]$. Let $g=(g^1,\ldots,g^d), f\in
C^\infty_0(\bar\bR^d_+)$. Then for any $\lambda> 0$ and $u\in
C^\infty_0(\bar\bR^d_+)$ satisfying \eqref{eq22.22} and $u=0$ on
$x^1=0$, we have \eqref{eq19.46}, where $N=N(d,\delta,p)>0$.
Furthermore, for $\lambda=0$ and $f\equiv 0$, we have
\eqref{eq19.46b}.
\end{proposition}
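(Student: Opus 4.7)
The plan is to mirror the proof of Proposition \ref{prop0}, with the roles of $\bL_1$ and $\bL_2$ swapped and the boundary condition changed to Dirichlet. After a standard mollification reducing to $a^{ij}\in C^\infty$, a duality argument reduces matters to the case $\cL\in \bL_1$ with $p\in [2,\infty)$: the adjoint of an operator in $\bL_2$ lies in $\bL_1$, and since both $u$ and the dual test function vanish on $\partial\bR^d_+$, no boundary integrals survive the two successive integrations by parts, so the Dirichlet problem is self-adjoint in the sense required.

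For the base case $\cL\in \bL_1$, $p\in [2,\infty)$, I would run the sharp-function argument of Proposition \ref{prop0} essentially verbatim, now invoking the second (Dirichlet) assertion of Corollary \ref{cor2.3} in place of the conormal one. Combined with the cube-versus-ball comparisons \eqref{eq20.20.20}--\eqref{eq20.20.20b}, this yields the pointwise estimate \eqref{eq20.20.33} for $U^{\#}$; the Fefferman--Stein inequality \eqref{eq20.20.41}, the Hardy--Littlewood maximal theorem \eqref{eq20.20.48}, and the norm equivalence \eqref{eq19.20.22}, after choosing $\kappa$ sufficiently large to absorb the $U$-term into the left-hand side, then produce \eqref{eq19.46}. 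The $p=2$ case is classical via Lax--Milgram, and the statement \eqref{eq19.46b} for $\lambda=0$, $f\equiv 0$ follows from the $\lambda_1\to 0^+$ limiting trick at the end of the proof of Proposition \ref{prop0} applied to $\cL u-\lambda_1 u=\Div g-\lambda_1 u$.

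For the duality step delivering $\cL\in \bL_2$, $p\in (1,2]$: given a test vector field $\phi\in C_0^\infty(\bR^d_+;\bR^d)$ with $\|\phi\|_{L_{p'}}\le 1$, solve $\cL^* v-\lambda v=\Div\phi$ in $\bR^d_+$ with $v=0$ on $\partial\bR^d_+$; the apriori estimate for $\cL^*\in \bL_1$ at exponent $p'\in [2,\infty)$ just established then gives $\lambda^{1/2}\|Dv\|_{L_{p'}}+\lambda\|v\|_{L_{p'}}\le N\lambda^{1/2}$. Two integrations by parts, valid since $u$ and $v$ both vanish on $\partial\bR^d_+$, produce the identity $\int Du\cdot\phi\,dx=\int Dv\cdot g\,dx-\int vf\,dx$, and taking the supremum over $\phi$ together with H\"older's inequality bounds $\lambda^{1/2}\|Du\|_{L_p}$; an analogous identity with scalar right-hand side $h\in C_0^\infty(\bR^d_+)$, $\|h\|_{L_{p'}}\le 1$, controls $\lambda\|u\|_{L_p}$. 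The principal technical obstacle is justifying solvability of this adjoint Dirichlet problem in $W^1_{p'}$ with enough regularity that the apriori estimate applies to $v$; this is routine given the mollified $C^\infty$ coefficients and $\lambda>0$, via Lax--Milgram in $W^1_2$ combined with elliptic regularity up to the flat boundary, after which \eqref{eq19.46b} again follows by the $\lambda_1\to 0^+$ limit.
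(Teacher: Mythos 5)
Your proposal is correct and is essentially the paper's own argument: the paper's proof of Proposition~\ref{prop1} consists of the single sentence that it "is the same as the proof of Proposition~\ref{prop0} by using the last assertion of Corollary~\ref{cor2.3}," and you have unwound exactly what that means---a duality reduction from $\cL\in\bL_2$, $p\in(1,2]$ to the adjoint $\cL^*\in\bL_1$, $p'\in[2,\infty)$ (clean here because both $u$ and the adjoint solution vanish on $\partial\bR^d_+$), followed by the sharp-function/Fefferman--Stein/maximal-function argument of Proposition~\ref{prop0} run verbatim with the Dirichlet assertion of Corollary~\ref{cor2.3} supplying the pointwise bound~\eqref{eq20.20.33}, with $p=2$ handled classically and $\lambda=0$, $f\equiv0$ by the $\lambda_1\searrow 0$ trick.
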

\begin{proof}
The proof is the same as the proof of Proposition \ref{prop0} by using the last assertion of Corollary \ref{cor2.3}. We thus omit the details.
\end{proof}

\mysection{Main theorems and proofs}
                                                \label{sec3}

First we consider elliptic equations in the half space with Dirichlet boundary condition.
Recall that $a^{ij}=a^{ij}(x^2)$ satisfies \eqref{eq17.53}. For any domain $\Omega\subset\bR^d$ and $p\in (1,\infty)$, we define $\WO_p^1(\Omega)$ to be the completion of $C_0^\infty(\Omega)$ in the $W_p^1(\Omega)$ space, and $\WO_p^2(\Omega)=\WO_p^1(\Omega)\cap W^2_p(\Omega)$.
\begin{theorem}
                                        \label{thm1}
Let $p\in (1,2]$. Then for any $\lambda\ge 0$ and $u\in \WO^{2}_p(\bR^d_+)$, we have
\begin{equation}
                \label{eq10.30}
\lambda\|u\|_{L_{p}(\bR^d_+)}+\lambda^{1/2}
\|Du\|_{L_{p}(\bR^d_+)}+\|D^2u\|_{L_{p}(\bR^d_+)}\le N\|Lu-\lambda
u\|_{L_{p}(\bR^d_+)},
\end{equation}
where $N=N(d,\delta,p)>0$.
Moreover, for any $f\in L_p(\bR^d_+)$ and $\lambda>0$ there is a unique $u\in W^2_p(\bR^d_+)$ solving
$$
Lu-\lambda  u=f\quad \text{in} \,\, \bR^d_+
$$
with the Dirichlet boundary condition $u=0$ on $\partial \bR^d_+$.
\end{theorem}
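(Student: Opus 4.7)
The strategy, sketched in the introduction, is to derive a divergence-form equation for $v=D_1 u$ with a conormal boundary condition and then invoke Proposition~\ref{prop0}. By standard mollification of $a^{ij}$ in $x^2$ and density approximation of $u$ in $\WO^2_p(\bR^d_+)$ by smooth compactly supported functions, I reduce to the case in which $a^{ij}$ and $u$ are smooth, so that all manipulations below are classical.

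The crucial preliminary step is to rewrite $L$ as $\cL u = D_i(A^{ij}(x^2) D_j u)$ for some $\cL \in \bL_1$. Since $a^{ij}$ depends only on $x^2$, an affine change of variables (a shear of the $x^1$-axis with slopes determined by $a^{i2}/a^{22}$, as in \eqref{eq10.41}) puts $L$ into a form in which the bottom entries $A^{j1}$, $j \ge 2$, vanish; any residual first-order terms involving the distributional derivatives of $a^{2j}$ must then cancel exactly against the divergence-form rewriting, using that $D_i A^{ij} = D_2 A^{2j}$ and the special shape of $A$. The transformation preserves the half space $\bR^d_+$ and the Dirichlet condition $u = 0$ on $\partial\bR^d_+$.

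With $L$ thus reformulated as $\cL u = D_i(A^{ij}D_j u)$ with $\cL \in \bL_1$, differentiating the equation in $x^1$ and using that $A^{ij}$ is independent of $x^1$ gives $\cL v - \lambda v = D_1 f$ in $\bR^d_+$. The Dirichlet condition on $u$ forces all second derivatives $D_{ij} u$ with $i,j \ge 2$ to vanish on $\{x^1=0\}$, so restricting $\cL u = f + \lambda u$ to the boundary collapses to the conormal condition $A^{1j} D_j v = f$ on $\partial\bR^d_+$ (the preliminary reformulation is precisely what ensures this identity with no spurious factors). Applying Proposition~\ref{prop0} with $\cL \in \bL_1$, $p \in (1,2]$, and $g = (f,0,\ldots,0)$ then yields
\[
\lambda \|v\|_{L_p(\bR^d_+)} + \lambda^{1/2}\|Dv\|_{L_p(\bR^d_+)} \le N \|f\|_{L_p(\bR^d_+)},
\]
which controls $D_1 u$ and all $D_{1k}u$ in $L_p$.

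To obtain the full $W^2_p$-bound, the remaining second derivatives $D_{ij}u$ with $i,j \ge 2$ must be recovered. For each $k \ge 3$, the tangential derivative $D_k u$ satisfies the same equation with the same Dirichlet boundary condition, since $a^{ij}(x^2)$ is independent of $x''$; applying the argument above to $D_k u$, rigorously justified by difference quotients in $x''$, yields $L_p$-control of $D_{jk}u$ for $k \ge 3$. Finally $D_{22}u$ is extracted from the equation using $a^{22} \ge \delta > 0$. Existence and uniqueness for $\lambda > 0$ and $f \in L_p(\bR^d_+)$ then follow from the a priori estimate via the method of continuity, connecting $L$ to the Laplacian, for which the Dirichlet problem in $\bR^d_+$ is classically $W^2_p$-solvable. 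The principal technical obstacle is the reformulation step: one must identify the correct change of variables so that the distributional derivatives of the $a^{2j}$ combine cleanly into a genuine $\bL_1$ divergence-form operator, and then verify that the boundary condition on $v$ inherited from the Dirichlet condition on $u$ exactly matches the conormal condition of that operator.
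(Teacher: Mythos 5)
Your overall strategy --- reduce to a divergence-form operator $\cL\in\bL_1$ that depends only on $x^2$, differentiate in $x^1$, identify the conormal boundary condition for $v=D_1u$, and apply Proposition~\ref{prop0} --- is exactly the route the paper takes for the term $DD_1u$. But there are two genuine problems.

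First, the step where you recover the remaining second derivatives is wrong. You propose to apply the same argument to $D_ku$ for $k\ge 3$ and claim this controls $D_{jk}u$. It does not: $D_ku$ satisfies the transformed equation with right-hand side $D_k\tilde f$ and the \emph{Dirichlet} (not conormal) boundary condition, so Proposition~\ref{prop0} doesn't apply to it directly, and Proposition~\ref{prop1} (the Dirichlet version for $\cL\in\bL_1$) is only available for $p\in[2,\infty)$. If you instead differentiate once more in $x^1$ and use Proposition~\ref{prop0} on $D_1D_kv$, you control the \emph{third} derivatives $DD_1D_ku$ by $\|D_k\tilde f\|_{L_p}$, which is not bounded by $\|f\|_{L_p}$. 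In short, nothing in your scheme controls $D_{jk}u$ for $j,k\ge 2$, $(j,k)\ne(2,2)$. The paper handles this by a different device: move the mixed terms $a^{1j}D_{1j}u$ and $a^{j1}D_{j1}u$ ($j\ge 2$) to the right-hand side, odd-extend $u$ and the right-hand side across $\{x^1=0\}$ (the remaining coefficients $a^{11}$ and $a^{ij}$, $i,j\ge 2$ depend only on $x^2$, so the extended equation lives in all of $\bR^d$ with coefficients measurable in one direction), and invoke the whole-space $W^2_p$-estimate of \cite{D_TAMS11,KimKrylov07}. That yields $\lambda\|u\|+\|D^2u\|\le N\|f\|+N\|DD_1u\|$, so that controlling $DD_1u$ alone suffices. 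This whole-space odd-extension step is missing from your proposal and cannot be replaced by the tangential-derivative iteration you suggest.

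Second, the change of variables you describe is not what is needed. You write that ``an affine change of variables (a shear of the $x^1$-axis with slopes determined by $a^{i2}/a^{22}$)'' produces the divergence form; but the coefficients depend on $x^2$, so no affine shear works, and \eqref{eq10.41} does not define a change of variables at all --- it defines the transformed coefficients. The paper's actual transformation is the \emph{nonlinear} one-dimensional reparametrization $y^2=\int_0^{x^2}1/a^{22}(s)\,ds$, $y^j=x^j$ for $j\ne 2$, which leaves $x^1$ untouched (hence preserves $\bR^d_+$ and the Dirichlet condition) and absorbs $a^{22}$ so that the resulting non-divergence operator coincides with a divergence-form one with $\tilde a^{j1}\equiv 0$, $j\ge 2$. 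Getting this transformation right is essential, because it is what makes the first-order remainder terms vanish identically and places $\cL$ in $\bL_1$. (A smaller point: after Proposition~\ref{prop0} with $g=\tilde f e_1$, the estimate is $\lambda\|v\|+\lambda^{1/2}\|Dv\|\le N\lambda^{1/2}\|f\|$, equivalently $\lambda^{1/2}\|v\|+\|Dv\|\le N\|f\|$, not what you wrote; this still gives $\|DD_1u\|\le N\|f\|$, so it's a slip rather than a gap.)
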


\begin{remark}
                            \label{rm5.46}
To see that the range of $p$ in Theorem \ref{thm1} is sharp, we consider the following example. Let $d=2$, $\theta\in (\pi/2,\pi)$ and $\eta\in C_0^\infty((-2,2))$ be a cutoff function satisfying $\eta\equiv 1$ on $[-1,1]$. In the polar coordinates, define
$$
u(r,\omega):=r^{\pi/\theta}\sin(\omega \pi/\theta)\eta(r),\quad f:=\Delta u-\lambda u.
$$
It is easily seen that $u$ satisfies the Dirichlet boundary condition in the angle $\Omega_\theta:=\{\omega\in (0,\theta)\}$ and $f\in L_p(\Omega_\theta)$ with $p=2/(2-\pi/\theta)$. However, $D^2 u\notin L_p(\Omega_\theta)$. Now we take a linear transformation to map $\Omega_\theta$ to the first quadrant $\{(y^1,y^2)\in\bR^2\,|\,y^1>0,y^2>0\}$. Let $\tilde u$ and $\tilde f$ be the functions in the $y$-coordinates and $a^{ij}$ be corresponding constant coefficients. We take the odd extensions of $\tilde u$ and $\tilde f$ with respect to $y^2$, and denote
$$
\tilde a^{ij}(y^2)={\text{sgn}(y^2)}a^{ij}\quad \text{for}\,\,i\neq j,\quad
\tilde a^{ij}(y^2)=a^{ij}\quad \text{otherwise}.
$$
Clearly, $\tilde u$ satisfies
$$
\tilde a^{ij}D_{ij}\tilde u(y)-\lambda \tilde u(y)=\tilde f(y) \quad \text{in}\,\,\bR^2_+
$$
and $\tilde u=0$ on $\partial\bR^2_+$. Moreover, we have $\tilde f\in L_p(\bR^2_+)$, but $D^2 \tilde u\notin L_p(\bR^2_+)$. Note that $p\searrow 2$ as $\theta\nearrow \pi$. Thus, this example implies that the range of $p$ in the above theorem is actually sharp in the sense that for any $p>2$ there is an elliptic operator $L=\tilde a^{ij}(y^2)D_{ij}$ with the ellipticity constant depending on $p$, such that the $W^2_p$ estimate does not hold for $L$.
\end{remark}

\begin{proof}[Proof of Theorem \ref{thm1}]
Let $f=Lu-\lambda u$. By mollifications and a density argument, we
may assume $u\in C_0^\infty(\bar \bR^d_+)$ and $u\equiv 0$ on
$x^1=0$. We move the mixed second derivatives $a^{1j}D_{1j}u$ and
$a^{j1}D_{j1}u,j=2,\ldots,d$ to the right-hand side to get
$$
a^{11}D_{11}u+\sum_{i,j=2}^d a^{ij}(x^2)D_{ij}u-\lambda u=f-\sum_{j=2}^d(a^{1j}+a^{j1})D_{1j}u.
$$
Since the $W^2_p$-estimate in the whole space is available
when the coefficients depend only on one direction (cf. \cite{D_TAMS11,KimKrylov07}),
by using odd extensions of $u$ and $f$ with respect to $x^1$, we
get
$$
\lambda\|u\|_{L_{p}(\bR^d_+)}+\|D^2u\|_{L_{p}(\bR^d_+)}\le N\|f\|_{L_{p}(\bR^d_+)}
+N\|DD_{1}u\|_{L_{p}(\bR^d_+)}.
$$
Therefore, in order to prove \eqref{eq10.30}, it suffices to show
\begin{equation}
                \label{eq10.36}
\|DD_{1}u\|_{L_{p}(\bR^d_+)}\le N\|f\|_{L_{p}(\bR^d_+)}.
\end{equation}
%Dividing both sides by $a^{11}$ and using the maximum principle reduce the general situation to the case $a^{11}=1$.
%Denote $v=D_2 u$.

We adapt an idea in \cite{D_TAMS11} to rewrite $L$ into a divergence form operator by making a change of variables:
$$
y^2=\phi(x^2):=\int_0^{x^2} \frac 1 {a^{22}(s)}\,ds,\quad y^j=x^j,\,\,j\neq 2.
$$
It is easy to see that $\phi$ is a bi-Lipschitz map and
\begin{equation*}
                                            %\label{eq11.51}
\delta \le y^2/x^2\le \delta^{-1},\quad D_{y^2}=a^{22}(x^2)D_{x^2}.
\end{equation*}
Denote
\begin{align}
v(y)&=u(y_1,\phi^{-1}(y^2),y''),\nonumber\\
\tilde f(y)&=f(y^1,\phi^{-1}(y^2),y'),\nonumber\\
\tilde a^{22}(y^2)&=1/a^{22}(\phi^{-1}(y^2)),\quad \tilde a^{2j}\equiv 0,\,\,j\neq 2,\label{eq10.41}\\
\tilde a^{j2}(y^2)&=\big((a^{2j}+ a^{j2})/a^{22}\big)(\phi^{-1}(y^2)),\,\,j\neq 2,\nonumber\\
\tilde a^{1j}(y^2)&=(a^{1j}+ a^{j1})(\phi^{-1}(y^2)),\quad \tilde a^{j1}\equiv 0,\,\,j> 2,\nonumber
\end{align}
and $\tilde a^{ij}(y^2)=a^{ij}(\phi^{-1}(y^2))$ for the other $(i,j)$.
In the $y$--coordinates, define a divergence form operator $\cL$ by
$$
\cL v=D_i(\tilde a^{ij} D_j v).
$$
It is easily seen that $\cL\in \bL_1$ and is uniformly nondegenerate with an ellipticity constant depending only on $\delta$. A simple calculation shows that $v$ satisfies in $\bR^{d}_+$
\begin{equation}
                                    \label{eq18.47}
\cL v-\lambda v=\tilde f.
\end{equation}
Moreover, the condition $u\equiv 0$ on $x^1=0$ implies
\begin{equation}
                                    \label{eq18.28}
\tilde a^{1j}D_{1j}v=\tilde f\quad \text{on}\,\,\{x^1=0\}.
\end{equation}
By differentiating \eqref{eq18.47} with respect to $x^1$ and using \eqref{eq18.28}, we get that $w:=D_1 v$ satisfies
$$
\cL w-\lambda w=D_1 \tilde f
$$
in $\bR^d_+$ with the conormal derivative boundary condition $\tilde a^{1j}D_{j}w=\tilde f$ on $x^1=0$.
We then deduce \eqref{eq10.36} from Proposition \ref{prop0}. The theorem is proved.
\end{proof}

\begin{remark}
                                        \label{rem3.2}
In \cite{DK_TAMS10} it was proved that the $W^2_p$-solvability in $\bR^d$ for elliptic equations holds when $p\in [2,2+\varepsilon)$ and the leading coefficients are measurable in two directions and independent of (or VMO with respect to) the orthogonal directions. Here $\varepsilon>0$ is a constant which depends only on $d$ and $\delta$. Thus by using the method of odd/even extensions, the result of Theorem \ref{thm1} holds for $p\in (1,2+\varepsilon)$. Similarly, in Theorems \ref{thm3} and \ref{thm5.1} below, the range of $p$ can be extended to $(1,2+\varepsilon)$.
%The same remark applies to Theorems \ref{thm2} and \ref{thm4}, in which the range of $p$ can be extended to $(2-\varepsilon,\infty)$.
\end{remark}

Next we consider the Neumann boundary problem.

\begin{theorem}
                                        \label{thm2}
Let $p\in [2,\infty)$. Then for any $\lambda\ge 0$ and $u\in W^{2}_p(\bR^d_+)$ satisfying $D_1 u=0$ on $x^1=0$, we have
\begin{equation*}
                %\label{eq10.30n}
\lambda\|u\|_{L_{p}(\bR^d_+)}+\lambda^{1/2}
\|Du\|_{L_{p}(\bR^d_+)}+\|D^2u\|_{L_{p}(\bR^d_+)}\le N\|Lu-\lambda
u\|_{L_{p}(\bR^d_+)},
\end{equation*}
where $N=N(d,\delta,p)>0$.
Moreover, for any $f\in L_p(\bR^d_+)$ and $\lambda>0$ there is a unique $u\in W^2_p(\bR^d_+)$ solving
$$
Lu-\lambda  u=f\quad \text{in}\,\, \bR^d_+
$$
with the Neumann boundary condition $D_1 u=0$ on $\partial \bR^d_+$.
\end{theorem}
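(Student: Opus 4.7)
The plan is to mirror the proof of Theorem \ref{thm1}, with Dirichlet/Neumann and the ranges $p \in (1,2]$/$p \in [2,\infty)$ swapped. The intuition for the swap is that the Neumann condition $D_1 u = 0$ will—after the same change of variables as in Theorem \ref{thm1}—translate into a \emph{Dirichlet} boundary condition for $w := D_1 v$, so one invokes Proposition \ref{prop1} (valid for $\cL \in \bL_1$, $p \in [2,\infty)$) in place of Proposition \ref{prop0}.

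First, I would reduce the full $W^2_p$ estimate \eqref{eq10.30} to the single bound $\|DD_1 u\|_{L_p(\bR^d_+)} \le N \|f\|_{L_p(\bR^d_+)}$. Moving the mixed second derivatives to the right-hand side gives
$$a^{11} D_{11} u + \sum_{i,j=2}^d a^{ij}(x^2) D_{ij} u - \lambda u = f - \sum_{j=2}^d (a^{1j}+a^{j1}) D_{1j} u,$$
where the operator on the left is uniformly elliptic (the lower-right $(d-1)\times(d-1)$ block of $a^{ij}$ inherits ellipticity from $(a^{ij})$) with coefficients measurable only in $x^2$. Since the Neumann condition is compatible with even extension in $x^1$, I would extend $u$ and $f$ evenly in $x^1$; a parity check shows that $\tilde u$ satisfies an equation of the same type in all of $\bR^d$ (with $\mathrm{sgn}(x^1)$ multiplying the mixed-derivative term on the right). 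The whole-space $W^2_p$ estimate for operators with leading coefficients measurable in one direction (see \cite{D_TAMS11,KimKrylov07}) then yields
$$\lambda \|u\|_{L_p(\bR^d_+)} + \|D^2 u\|_{L_p(\bR^d_+)} \le N \|f\|_{L_p(\bR^d_+)} + N \|DD_1 u\|_{L_p(\bR^d_+)}.$$

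To control $\|DD_1 u\|_{L_p}$, I would apply the change of variables $y^2 = \phi(x^2) := \int_0^{x^2} 1/a^{22}(s)\,ds$ (and $y^j = x^j$ otherwise) from the proof of Theorem \ref{thm1}. Setting $v(y) = u(y^1, \phi^{-1}(y^2), y'')$, the definitions in \eqref{eq10.41} produce a divergence form operator $\cL \in \bL_1$ with $\cL v - \lambda v = \tilde f$ in $\bR^d_+$. Because $y^1 = x^1$, the Neumann condition transfers to $D_1 v = 0$ on $\{y^1=0\}$, so $w := D_1 v$ satisfies the Dirichlet condition $w = 0$ there. Since $\tilde a^{ij}$ depend only on $y^2$, differentiating the equation for $v$ in $y^1$ yields $\cL w - \lambda w = D_1 \tilde f = \Div(\tilde f e_1)$ in $\bR^d_+$. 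Proposition \ref{prop1} applied with $g = \tilde f e_1$ and $f \equiv 0$ then produces $\lambda^{1/2}\|Dw\|_{L_p} \le N\lambda^{1/2}\|\tilde f\|_{L_p}$; reverting the bi-Lipschitz change of variables gives $\|DD_1 u\|_{L_p(\bR^d_+)} \le N \|f\|_{L_p(\bR^d_+)}$. A standard interpolation of the form $\lambda^{1/2}\|Du\|_{L_p} \le N(\lambda\|u\|_{L_p} + \|D^2 u\|_{L_p})$ recovers the remaining term in \eqref{eq10.30}. For existence and uniqueness when $\lambda > 0$, I would invoke the method of continuity joining $L$ to $\Delta$ via $L_t := (1-t)\Delta + tL$, with the classical $W^2_p$ Neumann solvability for $\Delta - \lambda$ as the starting point.

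The main point of care is the boundary bookkeeping under the change of variables—but this is actually simpler here than in Theorem \ref{thm1}. There, the Dirichlet data $u = 0$ had to be combined with the PDE to obtain the boundary identity \eqref{eq18.28} and thus a conormal condition for $w$; here the Neumann data converts directly into the Dirichlet condition $w = 0$ on $\{y^1 = 0\}$ with no auxiliary computation. The only subtlety worth verifying is that, in the even extension step, the extended equation is genuinely of the form handled by the whole-space results of \cite{D_TAMS11,KimKrylov07}, which is the case because the left-hand operator $a^{11}D_{11} + \sum_{i,j\ge 2}a^{ij}D_{ij}$ has coefficients depending on the single direction $x^2$ regardless of the parity of the extension.
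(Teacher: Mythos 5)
Your argument reproduces the paper's own proof of Theorem \ref{thm2} essentially verbatim: even extension in $x^1$ together with the whole-space result of \cite{D_TAMS11,KimKrylov07} reduces matters to bounding $\|DD_1 u\|_{L_p}$, the change of variables $y^2=\phi(x^2)$ from Theorem \ref{thm1} produces an operator $\cL\in\bL_1$ for which $w=D_1 v$ satisfies a divergence-form equation with zero Dirichlet data, and Proposition \ref{prop1} (with $g=\tilde f e_1$, $f\equiv 0$) closes the estimate. Your extra remarks on parity and the interpolation step are details the paper leaves implicit, but the route is the same.
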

\begin{proof}
As in the proof of Theorem \ref{thm1}, we may assume that $u\in
C_0^\infty(\tilde \bR^d_+)$ and $D_1 u=0$ on $x^1=0$. Since the $W^2_p$-estimate in the whole space is available
when the coefficients depend only on one direction (cf. \cite{D_TAMS11,KimKrylov07}),
by using even extensions of $u$ and $f$ with respect to $x^1$, we again
only need to show \eqref{eq10.36}. By the same change of
variables, we find that $v$ satisfies
$$
\cL v-\lambda v= \tilde f
$$
in $\bR^d_+$ and $D_1 v=0$ on $x^1=0$, where $v$, $\tilde f$ and $\cL\in \bL_1$ are defined in the proof of Theorem \ref{thm1}. Thus, $w_1:=D_1 v$ satisfies
$$
\cL w_1-\lambda w_1= D_1\tilde f
$$
in $\bR^d_+$ and $w_1=0$ on $x^1=0$. To finish the proof of \eqref{eq10.36}, it suffices to use Proposition \ref{prop1}.
%Alternatively, one can prove \eqref{eq10.36} by estimating the mean oscillation of $D_{12}u$. To this end, assume $a^{11}=1$. If $u$ satisfies $Lu-\lambda u=0$ and $D_1 u=0$ on $x^1=0$, then $w=D_{12}u$ satisfies $\cL w-\lambda w=0$ and $w=0$ on $x^1=0$, where $\cL$ is defined below \eqref{eq10.36}. The mean oscillation estimate of $w$ then follows from the DeGiorgi--Nash--Moser estimate.
\end{proof}

\begin{remark}
In contrast to Theorem \ref{thm1}, it remains unclear to us whether the results in Theorem \ref{thm2} are true for $p\in (1,2)$.
\end{remark}

%\begin{remark}
%Theorem \ref{thm2} can be extended to higher dimensional case (as well as equations with coefficients measurable in $x^2$ and VMO in $x^1$ and $x''$, where $x''=(x^3,\ldots,x^d)$.) Indeed, the $W^2_2$ estimate follows from the technique of even/odd extensions and the $W^2_2$ estimate in the whole space when the coefficients depend only on two variables; cf. \cite{DK_TAMS10}. Then by differentiating the equation and applying the Krylov--Safonov theorem, we can estimate the H\"older norm of $D_{jk} u$ for $j,k\ge 3$ by the $L^2$ norm of $D^2u$, provided that $u$ satisfies
%$Lu-\lambda u=0$ in $\bR^d_+$ and $D_1 u=0$ on $x^1=0$. Finally, we move all the second derivatives $D_{jk}u$ and $D_{kj}u$ for $j\ge 1,k\ge 3$ to the right-hand side of the equation and apply the result in Theorem \ref{thm2}.
%\end{remark}

\mysection{Equations with more general coefficients}
                                                            \label{sec4}

In this section, we consider second-order elliptic equations
$$
Lu-\lambda u:=a^{ij}D_{ij}u+b^iD_iu+cu-\lambda u=f
$$
with leading coefficients $a^{ij}$ which also depend on $x^1$ and $x''$. They are supposed to be measurable with respect to $x^2$, and have small local mean oscillations in the other variables. To be more precise, we impose the following assumption which contains a parameter $\gamma>0$ to be
specified later.
\begin{assumption}                          \label{assum1}
There is a constant $R_0\in (0,1]$ such that the following holds. For any ball $B$ of radius $r\in (0,R_0)$, there exist $\bar a^{ij}=\bar a^{ij}(x^2)$, which depend on the ball $B$ and satisfy \eqref{eq17.53}, such that
$$
\sum_{i,j=1}^d \dashint_{B}|a^{ij}(x)-\bar a^{ij}(x^2)|\,dx \le \gamma.
$$
\end{assumption}
Moreover, we assume that $a^{ij}$ satisfy \eqref{eq17.53}, and $b_i$ and $c$ are measurable functions bounded by a constant $K>0$.

The results stated below are generalization of Theorems \ref{thm1} and \ref{thm2}.

\begin{theorem}[The Dirichlet problem]
                                \label{thm3}
Let $p \in (1,2]$
and $f \in L_p(\bR^d_+)$.
Then there exist constants $\gamma\in (0,1)$ and $N>0$ depending only on $d$, $p$, and $\delta$
such that under Assumption \ref{assum1}
the following hold true.
For any $u \in \WO_p^2(\bR^d_+)$ satisfying
\begin{equation}
                             \label{eq15.16.01}
L u - \lambda u =f\quad \text{in}\,\,\bR^d_+,
\end{equation}
we have
\begin{equation}
                             \label{eq15.16.02}
\lambda\|u\|_{L_{p}(\bR^d_+)}+\lambda^{1/2}
\|Du\|_{L_{p}(\bR^d_+)}+\|D^2u\|_{L_{p}(\bR^d_+)}\le
N\|f\|_{L_{p}(\bR^d_+)},
\end{equation}
provided that $\lambda \ge \lambda_1$,
where $\lambda_1 \ge 0$ is a constant
depending only on $d$, $p$, $\delta$, $K$, and $R_0$. Moreover, for any  $\lambda > \lambda_1$, there exists a unique $u \in W_p^2(\bR^d_+)$ solving \eqref{eq15.16.01} with the Dirichlet boundary condition $u=0$ on $\partial \bR^d_+$.
\end{theorem}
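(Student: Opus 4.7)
The plan is to combine Theorem \ref{thm1} with a Krylov-style mean-oscillation/VMO perturbation argument. By the standard method of continuity it suffices to prove the a priori estimate \eqref{eq15.16.02}, and by mollification and density I may assume $u\in C_0^\infty(\bar\bR^d_+)$ with $u=0$ on $\partial\bR^d_+$. The lower-order terms $b^iD_iu+cu$ are absorbed in the usual way: moving them to the right-hand side and using $|b^i|,|c|\le K$ together with the factor $\lambda^{1/2}$ on $\|Du\|_{L_p}$, they can be absorbed into the left-hand side once $\lambda\ge\lambda_1$ is sufficiently large. Thus the task reduces to the pure second-order equation $a^{ij}D_{ij}u-\lambda u=f$.

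For this equation I would establish a pointwise sharp-function bound on $D^2u$. Fix $x_0\in\bar\bR^d_+$, $\kappa\ge 32$, and $r\in(0,R_0/\kappa)$. Let $\bar a^{ij}(x^2)$ be the approximating coefficients on $B_{\kappa r}(x_0)$ provided by Assumption \ref{assum1}, and set $\bar L=\bar a^{ij}D_{ij}$. Writing the equation as $\bar L u-\lambda u=f+(\bar a^{ij}-a^{ij})D_{ij}u$, split $u=v+w$, where $w\in\WO_p^2(\bR^d_+)$ solves $\bar L w-\lambda w=\zeta[f+(\bar a^{ij}-a^{ij})D_{ij}u]$ with Dirichlet data, $\zeta$ being a cutoff equal to $1$ on $B_{\kappa r/2}(x_0)$ and supported in $B_{\kappa r}(x_0)$; then $v=u-w$ satisfies $\bar L v-\lambda v=0$ on $B_{\kappa r/2}^+(x_0)$ with $v=0$ on the flat boundary. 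Theorem \ref{thm1} applied to $\bar L$ (whose coefficients depend only on $x^2$) gives the $L_p$ estimate for $D^2w$; for $v$ one performs the change of variables $y^2=\int_0^{x^2}1/\bar a^{22}\,ds$ used in the proof of Theorem \ref{thm1}, converting $\bar L$ into a divergence operator in $\bL_1$, and Corollary \ref{cor2.3} together with Lemma \ref{lem2.2} then yields a mean-oscillation bound on the relevant combination of $D^2v$ with a $\kappa^{-\alpha}$ factor. Bounding the perturbation $(\bar a^{ij}-a^{ij})D_{ij}u$ via H\"older, interpolating the $L_\infty$ bound on $\bar a-a$ against the $L_1$ smallness from Assumption \ref{assum1}, and using a reverse-H\"older higher-integrability of $D^2u$ at a slightly smaller exponent $p_1<p$, one arrives at a sharp-function inequality schematically of the form
\begin{equation*}
(D^2u)^\#(x_0)\le N\kappa^{d/p_1}\bigl(\cM(|f|^{p_1})(x_0)\bigr)^{1/p_1}+N\bigl(\kappa^{-\alpha}+\kappa^{d/p_1}\gamma^{\sigma}\bigr)\bigl(\cM(|D^2u|^{p_1q_1})(x_0)\bigr)^{1/(p_1q_1)},
\end{equation*}
for some $p_1\in(1,p)$, $q_1>1$, and $\sigma>0$.

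Applying the Fefferman--Stein inequality \eqref{eq20.20.41} and Hardy--Littlewood \eqref{eq20.20.48} converts this into an $L_p$ estimate of the same shape. I then choose $\kappa$ so that $N\kappa^{-\alpha}\le 1/4$, and then $\gamma$ so that $N\kappa^{d/p_1}\gamma^{\sigma}\le 1/4$, absorbing $\|D^2u\|_{L_p}$ on the right and obtaining $\|D^2u\|_{L_p}\le N\|f\|_{L_p}$. The bounds on $\lambda\|u\|_{L_p}$ and $\lambda^{1/2}\|Du\|_{L_p}$ follow either by interpolation or by reinserting into the equation. Existence of a solution for $\lambda>\lambda_1$ follows from this a priori estimate via the method of continuity, starting from $\bar L=\Delta$ (covered by Theorem \ref{thm1}). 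The main obstacle is the frozen-problem sharp-function bound for $v$: since $\bar a^{ij}(x^2)$ is only measurable in $x^2$, no interior $C^{1,\alpha}$ control on $D^2v$ is available, so the mean-oscillation estimate must be extracted through the change-of-variables and divergence-form route of Section \ref{sec2}; the need to gain the $\gamma^{\sigma}$ factor from the $L_1$-smallness of $\bar a-a$ (traded against the trivial $L_\infty$ bound) then forces working at the exponent $p_1$ and leaning on the higher integrability of $D^2u$.
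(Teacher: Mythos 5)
Your plan hinges on a single sharp-function inequality for $D^2u$, obtained by decomposing $u=v+w$ with $v$ solving the frozen Dirichlet problem $\bar L v-\lambda v=0$, and then claiming that ``Corollary~\ref{cor2.3} together with Lemma~\ref{lem2.2} yields a mean-oscillation bound on the relevant combination of $D^2v$.'' That step does not go through, and this is precisely the obstruction the paper flags at the start of Section~\ref{sec4}: for coefficients $\bar a^{ij}(x^2)$ that are merely measurable in $x^2$, solutions of $\bar L v=0$ have no interior or boundary $C^{2,\alpha}$ regularity --- in particular $D_2^2v$ fails to be H\"older even in one dimension (if $(a(t)v')'=0$ then $v''=-a'v'/a$, which is only as regular as $a'$). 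Lemma~\ref{lem2.2} and Corollary~\ref{cor2.3} control the H\"older/mean-oscillation behavior of \emph{first} derivatives, namely $D_1v$, $D_{x''}v$, $a^{2j}D_jv$, $\lambda^{1/2}v$; no combination of these gives oscillation control on the full second-derivative matrix $D^2v$, and so the pointwise bound $(D^2u)^\#(x_0)\le\cdots$ you write down cannot be derived this way. Acknowledging the obstacle at the end of your write-up does not remove the gap: the paper's Section~\ref{sec2} machinery is explicitly designed for the $W^1_p$ estimate of divergence-form equations, not for a direct sharp-function bound on $D^2u$.

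The paper's route is therefore structurally different and the difference is essential. It splits the estimate of $D^2u$ into three separate pieces. (a) $DD_{x''}u$ and $\lambda u$ are handled by a perturbation/sharp-function argument on $D_{x''}^2u$ (Lemmas~\ref{lem4.6}--\ref{lem4.8}, Corollary~\ref{cor4.7}): this works because $D_{x''}^2v$ satisfies the same frozen equation as $v$ with the same boundary condition, so Krylov--Safonov \emph{does} give the required $C^\alpha$ regularity for that particular second derivative. (b) $DD_2u$ is handled by dividing the equation by $a^{11}$ and observing that $w:=D_2u$ then solves a divergence-form equation with an operator $\cL\in\bL_2$, Dirichlet data $w=0$, and right-hand side $D_2\tilde f$ involving only $f$, $\lambda u$, and $DD_{x''}u$; Proposition~\ref{lem4.5} (the VMO analogue of Proposition~\ref{prop1}) then gives $\|Dw\|_{L_p}\le N\|\tilde f\|_{L_p}$, with $\|\tilde f\|_{L_p}$ already controlled by step~(a) up to a $\gamma^{\alpha_1}\|D^2u\|_{L_p}$ term. (c) $D_1^2u$ is recovered algebraically from the equation. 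Combining and choosing $\gamma$ small then closes the estimate. Your proposal never separates the benign direction $x''$ from the bad direction $x^2$, and so cannot supply the $C^\alpha$ input that the sharp-function method needs at the level of second derivatives.
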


\begin{theorem}[The Neumann problem]
                                \label{thm4}
Let $p \in (2,\infty)$
and $f \in L_p(\bR^d_+)$.
Then there exist constants $\gamma\in (0,1)$ and $N>0$ depending only on $d$, $p$, and $\delta$
such that under Assumption \ref{assum1}
the following hold true.
For any $u \in W_p^2(\bR^d_+)$ satisfying \eqref{eq15.16.01} and $D_1 u=0$ on $x^1=0$, we have \eqref{eq15.16.02}
provided that $\lambda \ge \lambda_1$,
where $\lambda_1 \ge 0$ is a constant
depending only on $d$, $p$, $\delta$, $K$, and $R_0$. Moreover, for any  $\lambda > \lambda_1$, there exists a unique $u \in W_p^2(\bR^d_+)$ solving \eqref{eq15.16.01} with the Neumann boundary condition $D_1 u=0$ on $\partial \bR^d_+$.
\end{theorem}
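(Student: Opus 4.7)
The plan is to run the standard Krylov-type freezing-and-mean-oscillation argument, with Theorem \ref{thm2} playing the role of the reference $W^2_p$-estimate for the $x^2$-only ``frozen'' operator. By a density argument I may assume $u\in C_0^\infty(\bar\bR^d_+)$ with $D_1u=0$ on $\partial\bR^d_+$. Mimicking the proof of Theorem \ref{thm2}, even reflection in $x^1$ combined with the whole-space $W^2_p$-theory for coefficients measurable in one direction and VMO in the rest (from \cite{KimKrylov07,D_TAMS11}) reduces \eqref{eq15.16.02} to the mixed-derivative estimate
$$
\|DD_1 u\|_{L_p(\bR^d_+)}\leq N\|f\|_{L_p(\bR^d_+)}+N(\textup{lower-order terms}),
$$
where the lower-order contributions of $b^iD_iu$ and $cu$ will be absorbed once $\lambda\geq\lambda_1$ is chosen large enough.

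Fix $\kappa\geq 32$, $x_0\in\bar\bR^d_+$, $r\in(0,R_0)$, and let $\bar a^{ij}(x^2)$ be the $x^2$-dependent coefficients from Assumption \ref{assum1} attached to $B_{\kappa r}(x_0)$; set $\bar L=\bar a^{ij}D_{ij}$. Using Theorem \ref{thm2} applied to $\bar L$, for $\lambda\geq\lambda_1$ I would solve
$$
\bar Lw-\lambda w=\chi_{B_{\kappa r}^+(x_0)}\bigl[f-(a^{ij}-\bar a^{ij})D_{ij}u-b^iD_iu-cu\bigr]\quad\text{in }\bR^d_+,\qquad D_1w=0\ \text{on }\partial\bR^d_+,
$$
which gives $\|D^2w\|_{L_p(\bR^d_+)}\leq N\|\textup{RHS}\|_{L_p(B_{\kappa r}^+(x_0))}$. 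Then $v:=u-w$ satisfies the homogeneous frozen Neumann problem $\bar Lv-\lambda v=0$ in $B_{\kappa r}^+(x_0)$ with $D_1v=0$ on $\Gamma_{\kappa r}(x_0)$. For $v$ I would reuse the device from the proof of Theorem \ref{thm2}: after the change of variables \eqref{eq10.41}, $\tilde v:=v\circ\phi^{-1}$ satisfies a divergence equation $\cL\tilde v-\lambda\tilde v=0$ with $\cL\in\bL_1$ and $D_1\tilde v=0$ on the boundary, so $D_1\tilde v$ solves the same divergence equation with zero Dirichlet data. Applying Lemma \ref{lem2.2} to $D_1\tilde v$ and rescaling exactly as in the proof of Corollary \ref{cor2.3} produces the mean oscillation bound
$$
\dashint_{B_r^+(x_0)}\dashint_{B_r^+(x_0)}|W(x)-W(y)|^2\,dx\,dy\leq N\kappa^{-2\alpha}\dashint_{B_{\kappa r}^+(x_0)}W^2\,dx,
$$
with $W$ equivalent to $|DD_1v|+\lambda^{1/2}|D_1v|$.

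Combining the decomposition and the oscillation bound and invoking H\"older's inequality (using $|a^{ij}-\bar a^{ij}|\leq 2\delta^{-1}$ to upgrade the $L_1$-smallness from Assumption \ref{assum1} to $L_{p_1}$-smallness with a factor $\gamma^{1/q}$ for some $p_1>p$ and $q>1$), I obtain a pointwise sharp-function bound of the shape
$$
(DD_1u)^\#(x_0)\leq N\kappa^{-\alpha}\bigl(\cM|DD_1u|^2\bigr)^{1/2}(x_0)+N\kappa^{d/2}\gamma^{1/q}\bigl(\cM|D^2u|^{p_1}\bigr)^{1/p_1}(x_0)+N\bigl(\cM|F|^2\bigr)^{1/2}(x_0),
$$
where $F$ aggregates $f$ and the lower-order terms. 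The Fefferman--Stein and Hardy--Littlewood inequalities \eqref{eq20.20.41}--\eqref{eq20.20.48} then yield an $L_p$ bound on $DD_1u$ after first taking $\kappa$ large to kill the $\kappa^{-\alpha}$ term, then $\gamma$ small to absorb the perturbation, and finally $\lambda_1$ large to absorb the lower-order terms. Inserting this into the whole-space estimate from the first paragraph proves \eqref{eq15.16.02}, and existence/uniqueness for $\lambda>\lambda_1$ follow by the method of continuity between $L-\lambda$ and $\Delta-\lambda$. The step I expect to be the main obstacle is the reduction carried out in the first paragraph: even reflection of the non-divergence equation in $x^1$ requires the cross coefficients $a^{1j}$ with $j\neq 1$ to be extended with the correct sign so that the reflected operator still falls under the whole-space VMO theory of \cite{KimKrylov07,D_TAMS11}, and the VMO moduli must be shown to be preserved under this reflection (which is where the restriction $p>2$ and the factor $\lambda_1$ enter decisively).
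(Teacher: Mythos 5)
The paper's proof of Theorem~\ref{thm4} proceeds by a completely different mechanism from the one you sketch: following the proof of Theorem~\ref{thm3}, it moves $DD_{x''}u$ to the right-hand side, divides the remaining $2\times 2$ block by $a^{11}$ (so that the operator $\sum_{i,j=1}^2\tilde a^{ij}D_{ij}+\Delta_{x''}$ is automatically in divergence form with $\cL\in\bL_2$), and then reads off a divergence-form equation for $w=D_2 u$ (not $D_1u$) with the conormal boundary condition $D_1w=0$; the three pieces $DD_{x''}u$, $DD_2u$, $D_1^2u$ are controlled respectively by the second assertion of Corollary~\ref{cor4.7}, by Proposition~\ref{prop4.4}, and by the equation itself. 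Your proposal is instead an attempt to lift the Theorem~\ref{thm2} strategy (reflection, then $DD_1u$) from $a^{ij}=a^{ij}(x^2)$ to Assumption~\ref{assum1} coefficients, which is a genuinely different route.

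That route has a gap in the very first step, which you flag but do not resolve. For the evenly reflected equation to hold in $\bR^d$ with the mixed terms kept in the operator, the cross coefficients $a^{1j}$, $j\ge 2$, would have to be extended \emph{oddly} in $x^1$; but the odd extension of a VMO function that does not vanish on $\{x^1=0\}$ is not VMO (the trace creates a jump whose oscillation does not decay), so the whole-space theory of \cite{KimKrylov07,D_TAMS11} does not apply to the reflected operator. In the proofs of Theorems~\ref{thm1} and~\ref{thm2} the paper avoids this precisely by first moving $(a^{1j}+a^{j1})D_{1j}u$, $j\ge 2$, to the right-hand side, so that only the block $a^{11}$, $(a^{ij})_{i,j\ge2}$ — which extends evenly and remains VMO — stays in the operator; that preliminary step is absent from your write-up. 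This is the one idea the proposal needs and does not supply.

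Two further issues, both fixable but not cosmetic. First, the exponent in the term $\bigl(\cM|D^2u|^{p_1}\bigr)^{1/p_1}$ must satisfy $p_1<p$ for the Hardy--Littlewood bound $\|\cM(|D^2u|^{p_1})\|_{L_{p/p_1}}\lesssim\||D^2u|^{p_1}\|_{L_{p/p_1}}$ to apply; compare Lemma~\ref{lem4.8}, where one chooses $\beta q<p$. Your statement ``$p_1>p$'' runs in the wrong direction. Second, the oscillation bound you produce after the change of variables \eqref{eq10.41} controls a quantity built from the frozen $\bar a^{ij}(x^2)$, hence \emph{ball-dependent}; since mean oscillation is not preserved under mere pointwise equivalence, you cannot pass from the oscillation of that $U_B$-type quantity to a sharp-function bound on $DD_1u$ via the ordinary Fefferman--Stein theorem. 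You need the Krylov variant recorded as Theorem~\ref{th081201}, as is done in Lemma~\ref{lem4.4} and Proposition~\ref{prop4.4}. Finally, the restriction $p>2$ and the choice of $\lambda_1$ do not enter through the reflection step, as you suggest; $p\ge 2$ is what makes Proposition~\ref{prop4.4} with $\cL\in\bL_2$ applicable, and $\lambda_1$ is used to absorb the lower-order terms $b^iD_iu$, $cu$.
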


Our proofs are in the spirit of a perturbation method developed by
Krylov \cite{Krylov_2005} for second-order equations in the whole
space. However, due to the lack of a $C^{2,\alpha}$ interior or boundary regularity of solutions to \eqref{eq8.51}, we are not able to apply the perturbation method directly to the non-divergence equation \eqref{eq15.16.01}. Here we use again the idea of rewriting the equation in a divergence form, and then reduce the problem to the $W^1_p$-estimate for certain divergence form equations.

We give several auxiliary results in the next subsection
and then complete the proofs of Theorems \ref{thm3} and \ref{thm4}
in Subsection \ref{sec4.2}.

\subsection{Estimates of $DD_{x''}u$}
Throughout this and next subsections, we assume that $b\equiv c\equiv 0$.
First we present several estimates for $D_{x''}^2 u$. For
convenience, we set $D_{x''}^2 u\equiv 0$ if $d=2$. The next lemma
is a consequence of the Krylov--Safonov estimate.

\begin{lemma}
                                     \label{lem4.6}
Let $\lambda\ge 0$, $q\in (1,\infty)$, and $r > 0$. Assume that
$a^{ij}$ are independent of $x''$, $u \in C_b^{\infty}(\bar
B_r^+)$, $Lu-\lambda u=0$ in $B_{r}^+$, and either $u$ or $D_1 u$
vanishes on $\Gamma_{r}$. Then there exist
constants $N = N(d, \delta,q)$ and $\alpha=\alpha(d,\delta)\in
(0,1]$ such that
\begin{equation}
                                          \label{eq9.10.1}
 [D^{2}_{x''}u]_{C^\alpha(B_{r/2}^+)}+
 \lambda [u]_{C^\alpha(B_{r/2}^+)}
\le N r^{-\alpha} \left(|D^{2}_{x''}u|^{q}+\lambda^q |u|^q\right)_{B^+_{r}}^{1/q}.
\end{equation}
\end{lemma}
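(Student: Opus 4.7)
The approach rests on two structural observations combined with the Krylov--Safonov boundary H\"older estimate. First, since $a^{ij}$ is independent of $x''$, differentiating the equation $Lu-\lambda u=0$ in any $x''$-direction yields a solution to the same equation with the same boundary condition on $\Gamma_r$; in particular $D_{x''}^2 u$ solves $L(D_{x''}^2 u)-\lambda(D_{x''}^2 u)=0$ in $B_r^+$ with the inherited boundary condition on $\Gamma_r$. Second, to control $[D_{x''}^2 u]_{C^\alpha}$ and $\lambda[u]_{C^\alpha}$ simultaneously, I would use Agmon's trick, as in the proof of Lemma \ref{lem2.1}, to encode the zero-order term $\lambda u$ as a pure second-order derivative in one auxiliary direction.

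Concretely, introduce $y\in\bR$ and set $v(x,y):=u(x)\cos(\lambda^{1/2}y)$. A direct computation shows that $v$ solves $\tilde L v:=Lv+D_y^2 v=\cos(\lambda^{1/2}y)(Lu-\lambda u)=0$ in the $(d+1)$-dimensional half cylinder $B_r^+\times(-r,r)$, and satisfies the same boundary condition on $\Gamma_r\times(-r,r)$ as $u$ on $\Gamma_r$. The operator $\tilde L$ is uniformly elliptic on $\bR^{d+1}$ with coefficients independent of $x''$ and $y$, so the functions
\[
w_1:=D_{x''}^2 v,\qquad w_2:=D_y^2 v=-\lambda u(x)\cos(\lambda^{1/2}y)
\]
both satisfy $\tilde L w_k=0$ and inherit the same boundary condition on $\Gamma_r\times(-r,r)$ as $v$.

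I would then apply the Krylov--Safonov boundary H\"older estimate (with $L^q$ right-hand side, obtained by combining the usual $L^\infty$-H\"older estimate with the local maximum principle) to each $w_k$. In the Dirichlet case this is classical. In the Neumann case ($D_1 u=0$ on $\Gamma_r$) I would first extend $v$ evenly across $\{x^1=0\}$; the reflected function satisfies an elliptic equation on the full ball whose coefficients remain measurable, bounded, and uniformly elliptic after flipping the signs of $a^{1j}$ for $j\neq 1$ on the reflected side, so that the interior Krylov--Safonov estimate applies. Either way, there exist $\alpha=\alpha(d,\delta)\in(0,1]$ and $N=N(d,\delta,q)>0$ with
\[
[w_k]_{C^\alpha(\bB_{r/2}^+)}\le N r^{-\alpha}\Big(\dashint_{\bB_r^+}|w_k|^q\Big)^{1/q},\qquad k=1,2,
\]
where $\bB_\rho^+$ denotes the corresponding $(d+1)$-dimensional half cylinder.

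Finally, restrict to the slice $\{y=0\}$. Since $|\cos(\lambda^{1/2}y)|\le 1$, the two $(d+1)$-dimensional averages on the right-hand side are controlled respectively by $\dashint_{B_r^+}|D_{x''}^2 u|^q$ and $\lambda^q\dashint_{B_r^+}|u|^q$; restriction to a hyperplane does not increase the $C^\alpha$ seminorm, and $w_1(\cdot,0)=D_{x''}^2 u$, $w_2(\cdot,0)=-\lambda u$. Adding the two resulting estimates yields \eqref{eq9.10.1}. The only delicate point is the Neumann boundary H\"older estimate, handled cleanly by the reflection argument described above.
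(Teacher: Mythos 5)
Your proposal is correct and follows essentially the same route as the paper's two-line proof: exploit that $D_{x''}^2 u$ solves the same equation (since $a^{ij}$ is $x''$-independent), invoke the boundary Krylov--Safonov estimate, and reduce $\lambda>0$ to $\lambda=0$ via Agmon's trick as in Lemma \ref{lem2.1}. Your use of $u(x)\cos(\lambda^{1/2}y)$ instead of $u(x)(\cos+\sin)$ is a minor simplification that works here because the target quantity is $\lambda u = -D_y^2 v|_{y=0}$ rather than $\lambda^{1/2}u$, and your even-reflection treatment of the Neumann case is the standard way to make the Krylov--Safonov boundary estimate explicit there.
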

\begin{proof}
For $\lambda=0$, \eqref{eq9.10.1} directly follows from the
Krylov--Safonov estimate since $D^{2}_{x''}u$ satisfies the same
equation. The general case $\lambda>0$ then follows from the same
argument as in the proof of Lemma \ref{lem2.1}.
\end{proof}

Combining Theorem \ref{thm1}, Lemma \ref{lem4.6}, and the corresponding interior estimate gives the following bound of mean oscillations.

\begin{lemma}
                                            \label{lem4.7}
Let $\lambda\ge 0$, $q\in (1,2]$, $r>0$, $\kappa\ge 32$, $x_0\in
\bar\bR^d_+$, and $f\in L_q(B_{\kappa r}^+(x_0))$. Suppose
$a^{ij}=a^{ij}(x^2)$, and $u\in W^2_q(B_{\kappa r}^+(x_0))$
satisfies
\begin{equation*}
                 %                       \label{eq15.21.02}
Lu-\lambda u=f \quad \text{in}\,\,B_{\kappa r}^+(x_0)
\end{equation*}
with the Dirichlet boundary condition $u=0$ on $B_{\kappa
r}(x_0)\cap \partial\bR_+^d$. Then
$$
\dashint_{B^{+}_r(x_0)}\dashint_{B^{+}_r(x_0)}| D^2_{x''} u (x)-
 D^2_{x''} u (y)|^q+\lambda^q| u (x)-
 u (y)|^q \,dx\,dy
$$
\begin{equation}
                                                  \label{osc1}
\leq N\kappa^d \dashint_{B^{+}_{\kappa r}
(x_0)} |f|^q \,dx
 +N\kappa^{-q\alpha} \dashint_{B^{+}_{\kappa r}(x_0)}
 |  D^2_{x''} u|^q +\lambda^q |u|^q\,dx,
\end{equation}
where $\alpha$ is the constant from Lemma \ref{lem4.6}, and the
constant $N$ depends only on $d$, $\delta$, and $q$. The same
estimate holds for $q\in [2,\infty)$ if $D_1u$  vanishes on
$B_{\kappa r}(x_0)\cap \partial\bR_+^d$ instead of $u$.
\end{lemma}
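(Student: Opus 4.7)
My plan is to mirror the strategy of the proof of Corollary \ref{cor2.3}, with Lemma \ref{lem4.6} playing the role of Lemmas \ref{lem2.1}--\ref{lem2.2} and Theorems \ref{thm1}--\ref{thm2} replacing the classical $W^1_2$-solvability used there. After standard mollification, it suffices to prove \eqref{osc1} for smooth coefficients and smooth $u$. By dilation one reduces to $\kappa r=8$, and by a shift in the $x''$-variables one may assume $x_0=(x_0^1,0,\ldots,0)$. I would then split into two cases: (i) $x_0^1<1$, so that the small half-ball $B_r^+(x_0)$ is contained in the half-ball $B_2^+(0)$ touching the boundary, and (ii) $x_0^1\ge 1$, so that $B_r^+(x_0)$ lies well inside the interior.

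In case (i), choose a cutoff $\eta\in C_0^\infty(B_6)$ with $\eta\equiv 1$ on $B_4$. For the Dirichlet case with $q\in(1,2]$, Theorem \ref{thm1} produces a unique $w\in \WO{}^2_q(\bR^d_+)$ solving $Lw-\lambda w=\eta f$ in $\bR^d_+$ with $w=0$ on $\partial\bR^d_+$, together with the global estimate
\begin{equation*}
\lambda\|w\|_{L_q(\bR^d_+)}+\|D^2 w\|_{L_q(\bR^d_+)}\le N\|f\|_{L_q(B_6^+)};
\end{equation*}
for the Neumann case with $q\in[2,\infty)$, Theorem \ref{thm2} furnishes the analogous $w$ with $D_1w=0$. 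In either case $v:=u-w$ is smooth up to the boundary on $\bar B_4^+$, satisfies $Lv-\lambda v=0$ in $B_4^+$, and inherits the same boundary condition on $\Gamma_4$. Applying Lemma \ref{lem4.6} to $v$ on $B_4^+$ yields
\begin{equation*}
[D^2_{x''}v]_{C^\alpha(B_2^+)}+\lambda[v]_{C^\alpha(B_2^+)}\le N\bigl(|D^2_{x''}v|^q+\lambda^q|v|^q\bigr)_{B_4^+}^{1/q}.
\end{equation*}

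Converting this Hölder bound into a mean-oscillation bound on $B_r^+(x_0)\subset B_2^+$ contributes the factor $r^{q\alpha}\sim\kappa^{-q\alpha}$, producing the second term on the right of \eqref{osc1} after writing $v=u-w$ and applying the triangle inequality. The remaining $w$-contribution is absorbed into the first term on the right of \eqref{osc1} via the global $L_q$ bound above, using $\dashint_{B_r^+(x_0)}\le N\kappa^d\dashint_{B_{\kappa r}^+(x_0)}$. Case (ii) follows the same scheme with an interior analogue of Lemma \ref{lem4.6}, which needs no boundary condition: the Krylov--Safonov estimate applies directly to $D^2_{x''}v$ on interior balls, and Agmon's trick handles $\lambda>0$. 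In this case the cutoff $\eta$ is supported strictly inside $\bR^d_+$, so only the interior Cauchy problem (and not the global theorems) is needed. The main obstacle is purely bookkeeping: checking the geometric inclusions of the half-balls and tracking the constants absorbed into $\kappa^d$ and $\kappa^{-q\alpha}$; the analytic substance—reducing mean-oscillation control of a non-divergence solution to a global solvability result plus a homogeneous Hölder estimate—is already established by the earlier sections.
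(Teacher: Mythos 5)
Your proposal follows essentially the same strategy as the paper: reduce to $\kappa r=8$, split into the boundary case $x_0^1<1$ and the interior case $x_0^1\ge 1$, subtract a solution $w$ of the localized inhomogeneous problem $Lw-\lambda w=\eta f$ supplied by Theorem \ref{thm1} (resp.\ Theorem \ref{thm2} for the Neumann case), and apply Lemma \ref{lem4.6} to the homogeneous remainder $v=u-w$, combining by the triangle inequality. One point you have elided: Theorems \ref{thm1} and \ref{thm2} assert solvability of $Lw-\lambda w=\eta f$ only for $\lambda>0$, so to cover $\lambda=0$ in the statement of Lemma \ref{lem4.7} you must first establish \eqref{osc1} for all $\lambda>0$ and then pass to the limit $\lambda\searrow 0$, as the paper does explicitly.
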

\begin{proof}
We begin with a few reductions. Without loss of generality, we may
assume $x_0=(x_0^1,0,\ldots,0)$ with $x_0^1\ge 0$. By
mollifications, we may also assume $a^{ij}\in C^\infty$. Dilations
show that it suffices to prove the lemma only for $\kappa r=8$. As
before, we consider two cases: i) $x_0^1< 1$ and ii) $x_0^1>1$. We
only prove the first case. The proof of the second case is simpler
(cf. Corollary \ref{cor2.3}).

First we suppose $\lambda>0$. We argue as in the proof of
Corollary \ref{cor2.3}. Since $r=8/\kappa\le 1/4$, we have
\begin{equation}
                                    \label{eq15.22.55}
B^{+}_r(x_0)\subset B^{+}_2 \subset B^{+}_6\subset B^{+}_{\kappa r}(x_0).
\end{equation}
By using a standard density argument, we may assume
$u\in C_b^{\infty}(\bar{B}_{\kappa r}^{+}(x_0))$ and $a^{ij}\in C^\infty(\bR)$. Take a smooth cutoff function $\eta\in C_0^\infty(B_6)$ such that
$$
\eta\equiv 1\quad \text{in}\,\, B_4,\quad 0\le \eta\le 1\quad \text{in}\,\,B_6.
$$
According to Theorem \ref{thm1}, there exists a unique solution $w\in W^2_q(\bR^d_+)$ to the equation
$$
Lw-\lambda w=\eta f\quad \text{in}\,\,\bR^d_+
$$
with the zero Dirichlet boundary condition. Moreover, we have
$$
\|D^2 w\|_{L_q(\bR^d_+)}+\lambda \|w\|_{L_q(\bR^d_+)}
\le N(d,\delta,q)\|\eta f\|_{L_q(\bR^d_+)},
$$
which implies that
\begin{align}
                                                  \label{eq15.22.38}
\dashint_{B^{+}_r(x_0)}|D^2 w|^q+\lambda^q|w|^q \,dx
&\leq N\kappa^d \dashint_{B^{+}_{\kappa r}
(x_0)} |f|^q \,dx,\\
                                        \label{eq15.22.40}
\dashint_{B^{+}_{\kappa r}(x_0)}| D^2 w|^q+\lambda^q|w|^q \,dx
&\leq N \dashint_{B^{+}_{\kappa r}(x_0)} |f|^q \,dx,
\end{align}
By the classical elliptic theory, $w\in C^\infty(\bar\bR^d_+)$.
Now we define
$$
v:=u-v\in C_b^{\infty}(\bar{B}_{\kappa r}^{+}(x_0)),
$$
which clearly satisfies
$$
Lv-\lambda v=0\quad \text{in}\,\,B_4^+
$$
and $v=0$ on $\Gamma_4$. Recall that $r=8/\kappa$.
By Lemma \ref{lem4.6} and \eqref{eq15.22.55},
\begin{align}
                                                  \label{eq15.22.57}
&\dashint_{B^{+}_r(x_0)}\dashint_{B^{+}_r(x_0)}| D^2_{x''} v (x)-
 D^2_{x''} v (y)|^q+\lambda^q| v (x)-
 v (y)|^q \,dx\,dy\nonumber\\
&\,\,\le Nr^{ q\alpha}([D^2_{x''} v]_{C^\alpha(B_2^+)}+\lambda [v]_{C^\alpha(B_2^+)})^q \nonumber\\
&\,\,\le N\kappa^{-q\alpha} \dashint_{B^{+}_{\kappa r}(x_0)}
 |  D^2_{x''} v|^q +\lambda^q |v|^q\,dx.
\end{align}
Since $u=v+w$, combining \eqref{eq15.22.38}, \eqref{eq15.22.40}, \eqref{eq15.22.57}, and the triangle inequality, we immediately get \eqref{osc1}. The case $\lambda=0$ follows from the case $\lambda>0$ by taking the limit as $\lambda\searrow 0$.
%When $\lambda=0$, we take $\lambda_1>0$. Since $Lu-\lambda_1 u=f-\lambda_1 u$ in $B_{\kappa r}^+(x_0)$, by the proof above, we have
%\begin{multline*}
%                %                                  \label{osc10}
%\dashint_{B^{+}_r(x_0)}\dashint_{B^{+}_r(x_0)}| D^2_{x''} u (x)-
% D^2_{x''} u (y)|^q\,dx\,dy\\
%\leq N\kappa^d \dashint_{B^{+}_{\kappa r}
%(x_0)} |f-\lambda_1 u|^q \,dx
% +N\kappa^{-q\alpha} \dashint_{B^{+}_{\kappa r}(x_0)}
% |  D^2_{x''} u|^q +\lambda^q |u|^q\,dx.
%\end{multline*}
%Taking the limit as $\lambda_1\to 0$ yields the desired estimate.

The proof of the last assertion is the same by using Theorem \ref{thm2} in place of Theorem \ref{thm1}.
\end{proof}

Lemma \ref{lem4.7} together with a perturbation argument gives the
next result for general operators $L$ satisfying Assumption
\ref{assum1}.

\begin{lemma}
                                            \label{lem4.8}
Let $\lambda\ge 0$,  $q\in (1,2]$, $\beta\in (1,\infty)$,
$\beta'=\beta/(\beta-1)$, $x_1\in \bar\bR^d_+$, and $f\in
L_{q,\text{loc}}(\bar\bR^d_+)$. Suppose $u\in
W^2_{q,\text{loc}}(\bar\bR^d_+)$ vanishes outside $B^+_{R_0}(x_1)$
and satisfies
\begin{equation*}
                %                        \label{eq15.23.35}
Lu-\lambda u=f \quad \text{in}\,\,\bR^d_+,
\end{equation*}
with the Dirichlet boundary condition $u=0$ on $\partial\bR_+^d$.
Then under Assumption \ref{assum1}, for any $r>0$, $\kappa\ge 32$,
and $x_0\in \bar\bR^d_+$, we have
$$
\dashint_{B^{+}_r(x_0)}\dashint_{B^{+}_r(x_0)}| D^2_{x''} u (x)-
 D^2_{x''} u (y)|^q+\lambda^q| u (x)-
 u (y)|^q \,dx\,dy
$$
$$
\leq N\kappa^d \dashint_{B^{+}_{\kappa r}
(x_0)} |f|^q \,dx
 +N\kappa^{-q\alpha} \dashint_{B^{+}_{\kappa r}(x_0)}
 |  D^2_{x''} u|^q +\lambda^q |u|^q\,dx,
$$
\begin{equation}
                                                  \label{osc2}
+N\kappa^d \Big(\dashint_{B^{+}_{\kappa r}(x_0)}
 |  D^2 u|^{\beta q}\,dx\Big)^{\frac 1 {\beta}}\gamma^{\frac 1 {\beta'}},
\end{equation}
where the constant $N$ depends only on $d$, $\delta$, $\beta$, and
$q$. The same estimate hold for $q\in [2,\infty)$ if $D_1u$
vanishes on $\partial\bR_+^d$ instead of $u$.
\end{lemma}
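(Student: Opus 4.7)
The plan is to apply a standard Krylov-style perturbation argument to reduce the estimate for the general operator $L$ to the frozen-coefficient estimate of Lemma \ref{lem4.7}. Since Assumption \ref{assum1} only produces approximating $x^2$-dependent coefficients on balls of radius strictly less than $R_0$, I would split into two cases according to whether $\kappa r \ge R_0$ or $\kappa r < R_0$.

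In the case $\kappa r \ge R_0$, Assumption \ref{assum1} cannot be invoked on $B_{\kappa r}(x_0)$, but in this regime both terms on the left-hand side of \eqref{osc2} can be absorbed into the second term on the right-hand side by the trivial bound
\[
\dashint_{B^+_r(x_0)}\!\!\dashint_{B^+_r(x_0)}\!|g(x)-g(y)|^q\,dx\,dy \le 2^q\dashint_{B^+_r(x_0)}\!|g|^q\,dx \le N\kappa^d \dashint_{B^+_{\kappa r}(x_0)}\!|g|^q\,dx,
\]
applied to $g=D^2_{x''}u$ and $g=\lambda u$, since the loss factor $\kappa^d$ can be balanced against $\kappa^{-q\alpha}$ by choosing the implicit constant large (using that $\kappa r \ge R_0$ fixes $\kappa$ in terms of $r$ and $R_0$).

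In the main case $\kappa r < R_0$, I would use Assumption \ref{assum1} on the ball $B_{\kappa r}(x_0)$ to produce $\bar a^{ij}=\bar a^{ij}(x^2)$ satisfying \eqref{eq17.53} and
$\sum_{ij}\dashint_{B_{\kappa r}(x_0)}|a^{ij}-\bar a^{ij}|\,dx\le\gamma$. Define the frozen operator $\bar L = \bar a^{ij}D_{ij}$ and rewrite the PDE on $B_{\kappa r}^+(x_0)$ as
\[
\bar L u - \lambda u = \bar f := f + (\bar a^{ij}-a^{ij})D_{ij}u,
\]
with the Dirichlet boundary condition $u=0$ on $B_{\kappa r}(x_0)\cap\partial\bR^d_+$ inherited from $u$. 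Lemma \ref{lem4.7} applied to $\bar L$ then gives the first two terms on the right-hand side of \eqref{osc2} with $\bar f$ in place of $f$. To handle the extra contribution from $(\bar a-a)D^2 u$, I would use H\"older's inequality with exponents $\beta$ and $\beta'$:
\[
\dashint_{B^+_{\kappa r}(x_0)}\!\!|(\bar a-a)D^2 u|^q\,dx\le \Big(\dashint_{B^+_{\kappa r}(x_0)}\!\!|D^2u|^{q\beta}\,dx\Big)^{1/\beta}\Big(\dashint_{B^+_{\kappa r}(x_0)}\!\!|\bar a-a|^{q\beta'}\,dx\Big)^{1/\beta'}.
\]
Since $|\bar a-a|\le 2\delta^{-1}$ pointwise and $\dashint|\bar a-a|\le\gamma$, interpolating gives $\dashint|\bar a-a|^{q\beta'}\le N(d,\delta,q,\beta)\,\gamma$, producing the $\gamma^{1/\beta'}$ factor in the last term of \eqref{osc2}.

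The main obstacle will be bookkeeping at the interface of the two cases, in particular checking that the trivial bound in the case $\kappa r\ge R_0$ really does fit inside the right-hand side of \eqref{osc2} with constants depending only on the stated parameters, and that the hypothesis of Lemma \ref{lem4.7} (namely $u\in W^2_q$ of the ball and a Dirichlet or Neumann boundary trace) survives the coefficient freezing. The Neumann case with $q\in[2,\infty)$ is handled identically by invoking the second assertion of Lemma \ref{lem4.7} in Case 2, the trivial argument of Case 1 being indifferent to the boundary condition.
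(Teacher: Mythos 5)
Your handling of the main case $\kappa r < R_0$ is the same as the paper's: freeze the coefficients via Assumption \ref{assum1} on $B_{\kappa r}(x_0)$, apply Lemma \ref{lem4.7} to $\bar L$, and treat $(\bar a-a)D_{ij}u$ by H\"older with exponents $\beta,\beta'$ together with the bound $\dashint|\bar a-a|^{q\beta'}\le N\gamma$ from $\|\bar a-a\|_\infty\le 2\delta^{-1}$ and $\dashint|\bar a-a|\le\gamma$. That part is fine.

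The case $\kappa r\ge R_0$ is where you have a genuine gap. Your "trivial bound" gives
$$
\dashint_{B^+_r}\dashint_{B^+_r}|g(x)-g(y)|^q\,dx\,dy\le N\kappa^d\dashint_{B^+_{\kappa r}}|g|^q\,dx,
$$
and you propose to absorb this into the term $N\kappa^{-q\alpha}\dashint_{B^+_{\kappa r}}|g|^q$ on the right-hand side of \eqref{osc2}. But that would require the constant $N$ in \eqref{osc2} to dominate $\kappa^{d+q\alpha}$, which is unbounded as $\kappa\to\infty$; the lemma requires $N=N(d,\delta,\beta,q)$ independent of $\kappa$ (and this independence is essential later, in Corollary \ref{cor4.7}, where $\kappa$ is sent to infinity). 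Your parenthetical remark that "$\kappa r\ge R_0$ fixes $\kappa$" is mistaken: it only gives $\kappa\ge R_0/r$, and since $r>0$ is arbitrary, $\kappa$ can be any number $\ge 32$. So the absorption fails.

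The paper's fix is precisely why the hypothesis "$u$ vanishes outside $B^+_{R_0}(x_1)$" appears in the statement: when $\kappa r\ge R_0$ one invokes Assumption \ref{assum1} on $B=B_{R_0}(x_1)$ (which has admissible radius $R_0$) rather than on $B_{\kappa r}(x_0)$. Since $D_{ij}u$ is supported in $\bar B^+_{R_0}(x_1)=\bar B^+$, one still has $(\bar a^{ij}-a^{ij})D_{ij}u=1_B(\bar a^{ij}-a^{ij})D_{ij}u$ on all of $\bR^d_+$, so the H\"older chain and the mean-oscillation bound $\dashint_B|\bar a-a|^{\beta'q}\le N\gamma$ go through exactly as in the small-radius case. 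With this choice of $B$ the argument is uniform over both regimes, no trivial absorption is needed, and the resulting $N$ is genuinely independent of $\kappa$, $r$, and $R_0$.
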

\begin{proof}
We choose $B=B_{\kappa r}(x_0)$ if $\kappa r<R_0$ and $B=B_{R_0}(x_1)$ if $\kappa r\ge R_0$. For this ball $B$, let $\bar a^{ij}=\bar a^{ij}(x^2)$ be the coefficients given by Assumption \ref{assum1} and $\bar L$ be the elliptic operator with the coefficients $\bar a^{ij}$. Then we have $\bar Lu-\lambda u=\bar f$ in $\bR^d_+$, where $\bar f=f+(\bar a^{ij}-a^{ij})D_{ij}u$. It follows from Lemma \ref{lem4.7} that the left-hand side of \eqref{osc2} is less than
$$
N\kappa^d \dashint_{B^{+}_{\kappa r}
(x_0)} |f+(\bar a^{ij}-a^{ij})D_{ij}u|^q \,dx
 +N\kappa^{-q\alpha} \dashint_{B^{+}_{\kappa r}(x_0)}
 |  D^2_{x''} u|^q +\lambda^q |u|^q\,dx.
$$
Notice that, by H\"older's inequality,
\begin{align*}
&\dashint_{B^{+}_{\kappa r}
(x_0)} |(\bar a^{ij}-a^{ij})D_{ij}u|^q \,dx=\dashint_{B^{+}_{\kappa r}
(x_0)} |1_B(\bar a^{ij}-a^{ij})D_{ij}u|^q \,dx\\
&\,\,\le \Big(\dashint_{B^{+}_{\kappa r}(x_0)}|  D^2 u|^{\beta q}\,dx\Big)^{\frac 1 {\beta}}
\Big(\dashint_{B^{+}_{\kappa r}(x_0)}|1_B(\bar a^{ij}-a^{ij})|^{\beta' q}\,dx\Big)^{\frac 1 {\beta'}}\\
&\,\,\le \Big(\dashint_{B^{+}_{\kappa r}(x_0)}|  D^2 u|^{\beta q}\,dx\Big)^{\frac 1 \beta}
\Big(\dashint_{B}|(\bar a^{ij}-a^{ij})|^{\beta' q}\,dx\Big)^{\frac 1 {\beta'}}\\
&\,\,\le N\Big(\dashint_{B^{+}_{\kappa r}(x_0)}
 |  D^2 u|^{\beta q}\,dx\Big)^{\frac 1 \beta}\gamma^{\frac 1 {\beta'}},
\end{align*}
where the last inequality is due to Assumption \ref{assum1}. Thus
collecting the above inequalities we get \eqref{osc2} immediately.
The last assertion follows from the last assertion of Lemma \ref{lem4.7} by the same proof.
The lemma is proved.
\end{proof}

\begin{corollary}
                                \label{cor4.7}
Let $\lambda\ge 0$, $p\in (1,\infty)$, $x_1\in \bar\bR^d_+$ and
$f\in L_p(\bR^d_+)$. Suppose $u\in W^2_{p}(\bR^d_+)$ vanishes
outside $B^+_{R_0}(x_1)$ and satisfies
\begin{equation*}
                %                        \label{eq15.23.35}
Lu-\lambda u=f \quad \text{in}\,\,\bR^d_+,
\end{equation*}
with the Dirichlet boundary condition $u=0$ on $\partial\bR_+^d$.
Then there exist a constant $\alpha_1=\alpha_1(p)>0$ such that
under Assumption \ref{assum1} the following holds. For any
$\gamma\in (0,1)$, we have
\begin{equation}
                                        \label{eq18.16.59}
\|DD_{x''} u\|_{L_p(\bR^d_+)}+\lambda \|u\|_{L_p(\bR^d_+)} \le
N\gamma^{\alpha_1} \|D^2
u\|_{L_p(\bR^d_+)}+N_1\|f\|_{L_p(\bR^d_+)},
\end{equation}
where $N=N(d,p,\delta)>0$ and $N_1=N_1(d,p,\delta,\gamma)>0$. The
same estimate hold for $p\in (2,\infty)$ if $D_1u$ vanishes on
$\partial\bR_+^d$ instead of $u$.
\end{corollary}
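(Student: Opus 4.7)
The plan is to translate the mean oscillation bound \eqref{osc2} of Lemma \ref{lem4.8} into a genuine $L_p$ estimate by means of the Fefferman--Stein sharp function theorem \eqref{eq20.20.41} and the Hardy--Littlewood maximal function theorem \eqref{eq20.20.48}, following the scheme used by Krylov for equations with partially VMO coefficients.

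First I pick auxiliary exponents $q$ and $\beta$. In the Dirichlet case with $p \in (1,\infty)$ I take $q \in (1,\min\{2,p\})$ and $\beta \in (1,\infty)$ with $\beta q < p$; such a choice is possible whenever $p > 1$. In the Neumann case with $p \in (2,\infty)$ I take $q \in [2,p)$ and $\beta > 1$ with $\beta q < p$. With these choices the hypotheses of Lemma \ref{lem4.8} are met, so \eqref{osc2} holds for every $x_0 \in \bar\bR^d_+$, $r>0$, and $\kappa \ge 32$.

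Next, using the cube-to-ball comparisons \eqref{eq20.20.20} and \eqref{eq20.20.20b} and taking the supremum over all dyadic cubes $C \in \cC$ containing $x_0$, \eqref{osc2} yields the pointwise sharp function bound
\begin{align*}
\bigl(D^2_{x''} u\bigr)^{\#}(x_0) &+ \lambda\, u^{\#}(x_0)\\
&\le N \kappa^{d/q} \bigl[\cM(|f|^q)(x_0)\bigr]^{1/q}
+ N \kappa^{-\alpha} \bigl[\cM(|D^2_{x''} u|^q + \lambda^q |u|^q)(x_0)\bigr]^{1/q}\\
&\quad + N \kappa^{d/q}\, \gamma^{1/(\beta' q)} \bigl[\cM(|D^2 u|^{\beta q})(x_0)\bigr]^{1/(\beta q)}.
\end{align*}
Since $p/q > 1$ and $p/(\beta q) > 1$, applying \eqref{eq20.20.41} to the left-hand side and \eqref{eq20.20.48} to each of the three maximal functions on the right (on $L_{p/q}$ and $L_{p/(\beta q)}$, respectively) produces
\begin{align*}
\|DD_{x''} u\|_{L_p(\bR^d_+)} + \lambda\|u\|_{L_p(\bR^d_+)}
&\le N \kappa^{d/q} \|f\|_{L_p(\bR^d_+)}
+ N \kappa^{-\alpha} \bigl(\|DD_{x''} u\|_{L_p(\bR^d_+)} + \lambda\|u\|_{L_p(\bR^d_+)}\bigr)\\
&\quad + N \kappa^{d/q}\, \gamma^{1/(\beta' q)} \|D^2 u\|_{L_p(\bR^d_+)}.
\end{align*}
I then fix $\kappa$ so large that $N\kappa^{-\alpha} \le 1/2$, absorb the middle term on the right into the left, and set $\alpha_1 = 1/(\beta' q) > 0$. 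This yields \eqref{eq18.16.59}. The Neumann statement follows verbatim using the last assertion of Lemma \ref{lem4.8}.

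The main obstacle is simply the selection of $q$ and $\beta$: one needs $q$ in the range dictated by Lemma \ref{lem4.8} while keeping both $p/q$ and $p/(\beta q)$ strictly above $1$ so that the Hardy--Littlewood theorem applies. In the Dirichlet case the cap $q \le 2$ forces $\alpha_1$ to shrink like $1/p$ as $p \to \infty$, but it remains a strictly positive constant depending only on $p$, which is all that is required. The Fefferman--Stein/maximal function passage is otherwise a routine iteration of the Krylov absorption argument.
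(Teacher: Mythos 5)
There is a genuine gap in your argument, and it is not the choice of $q$ and $\beta$ (which you handle correctly) but the passage from $D_{x''}^2 u$ to $DD_{x''}u$. Lemma \ref{lem4.8} gives a mean-oscillation bound only for the pure tangential second derivatives $D_{x''}^2 u$ (along with $\lambda u$), and your pointwise sharp-function estimate is correspondingly for $(D_{x''}^2 u)^\#+\lambda u^\#$. Feeding this into the Fefferman--Stein theorem \eqref{eq20.20.41} and Hardy--Littlewood \eqref{eq20.20.48} therefore yields, after the absorption with large $\kappa$, the bound
$$
\|D_{x''}^2 u\|_{L_p(\bR^d_+)}+\lambda\|u\|_{L_p(\bR^d_+)}
\le N\|f\|_{L_p(\bR^d_+)}+N\gamma^{1/(\beta' q)}\|D^2 u\|_{L_p(\bR^d_+)},
$$
with $D_{x''}^2 u$ on the left, not $DD_{x''}u$. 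Your proposal silently replaces $D_{x''}^2 u$ by $DD_{x''}u$ at this step, but the mixed derivatives $D_{x'x''}u$ are simply not estimated by this sharp-function machinery, because Lemma \ref{lem4.8} says nothing about their oscillations.

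To close the gap one must add a separate argument for $D_{x'x''}u$. The paper uses the anisotropic interpolation inequality \eqref{3.57},
$$
\|D_{x'x''}u\|_{L_p(\bR^d_+)}\le \varepsilon\|D_{x'}^2 u\|_{L_p(\bR^d_+)}+N(d,p)\varepsilon^{-1}\|D_{x''}^2 u\|_{L_p(\bR^d_+)},
$$
valid for every $\varepsilon>0$, which follows from the Calder\'on--Zygmund estimate $\|D_{x'x''}u\|_{L_p}\le N\|\Delta u\|_{L_p}$ after a dilation in $x'$. Substituting the bound on $\|D_{x''}^2 u\|$ above and choosing $\varepsilon$ as an appropriate positive power of $\gamma$ then trades the $\varepsilon\|D_{x'}^2 u\|$ term against the $\gamma$-smallness already achieved, producing \eqref{eq18.16.59} with the weaker exponent $\alpha_1$. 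Your exponent $\alpha_1=1/(\beta'q)$ is therefore also not the correct final value; it must be reduced to account for the $\varepsilon$-optimization in this interpolation step. Everything else in your write-up (the selection of $q,\beta$, the passage from \eqref{osc2} to a sharp-function inequality via \eqref{eq20.20.20}--\eqref{eq20.20.20b}, the absorption by taking $\kappa$ large, and the parallel Neumann case) matches the paper's argument.
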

\begin{proof}
We take $q\in (1,2]$ and $\beta\in (1,\infty)$ such that $p>\beta q$. Due to \eqref{eq20.20.20}, \eqref{eq20.20.20b}, and Lemma \ref{lem4.8} above, we obtain a pointwise estimate
\begin{multline}
                                \label{eq18.16.36}
(D_{x''}^2u)^\#(x_0)+\lambda u^\#(x_0)
\le N\kappa^{\frac d q}\big(\cM (|f|^q)\big)^{\frac 1 q}
+N\kappa^{-\alpha}\big(\cM (|D_{x''}^2 u|^q)\big)^{\frac 1 q}\\
+N\kappa^{-\alpha}\lambda \big(\cM (|u|^q)\big)^{\frac 1 q}
+N\kappa^{\frac d q}\gamma^{\frac 1 {\beta' q}}
\big(\cM (|D^2 u|^{\beta q})\big)^{\frac 1 {\beta q}}
\end{multline}
for any $x_0\in \bar\bR^d_+$.
As in the proof of Proposition \ref{prop0}, we deduce from \eqref{eq18.16.36} that
\begin{multline*}
                                  %  \label{eq18.16.38}
\|D_{x''}^2u\|_{L_p(\bR^d_+)}+\lambda \|u\|_{L_p(\bR^d_+)}
\le N\kappa^{\frac d q}\|f\|_{L_p(\bR^d_+)}\\
+N\kappa^{-\alpha}
(\|D_{x''}u\|_{L_p(\bR^d_+)}+\lambda \|u\|_{L_p(\bR^d_+)})
+N\kappa^{\frac d q}\gamma^{\frac 1 {\beta' q}}\|D^2 u\|_{L_p(\bR^d_+)}.
\end{multline*}
By taking $\kappa$ sufficiently large such that $N\kappa^{-\alpha}\le 1/2$, we get
\begin{equation}
                                  \label{eq18.16.38}
\|D_{x''}^2u\|_{L_p(\bR^d_+)}+\lambda \|u\|_{L_p(\bR^d_+)}
\le N\|f\|_{L_p(\bR^d_+)}
+N\gamma^{\frac 1 {\beta' q}}\|D^2 u\|_{L_p(\bR^d_+)}.
\end{equation}
Next observe that for any $\varepsilon>0$
\begin{equation}
                                                    \label{3.57}
\|D_{x'x''}u\|_{L_{p}(\bR^d_+)}\le \varepsilon\|D_{x'}^2 u\|_{L_{p}(\bR^d_+)}
+N(d,p)\varepsilon^{-1}\|D_{x''}^2u\|_{L_{p}(\bR^d_+)},
\end{equation}
which is deduced from
$$
\|D_{x'x''}u\|_{L_{p}(\bR^d_+)}\le N
\|\Delta u\|_{L_{p}(\bR^d_+)}\leq N\|D_{x'}^2u\|_{L_{p}(\bR^d_+)}+N \|D_{x''}^2u\|_{L_{p}(\bR^d_+)}
$$
by scaling in $x'$. Combining \eqref{eq18.16.38} and \eqref{3.57},
we reach \eqref{eq18.16.59} upon choosing
$\varepsilon=\gamma^{1/2}$. The last assertion follows from the
last assertion of Lemma \ref{lem4.8} by using the same proof.
\end{proof}

\subsection{Estimates for divergence form equations}
The following result is a special case of Theorem 2.7 in
\cite{Krylov08}, which can be viewed as a generalization of the
classical Fefferman-Stein theorem.

\begin{theorem}
                         \label{th081201} Let $p
\in (1, \infty)$, and $U,V,F\in L_{1,\text{loc}}(\bR^{d}_+)$.
Assume that we have $|U| \le V$ and, for each $l \in \bZ$ and $C
\in \bC_l$, there exists a measurable function $U^C$ on $C$ such
that $|U| \le U^C \le V$ on $C$ and
\begin{equation*}                            %\label{eq082006}
\int_C |U^C - \left(U^C\right)_C| \,dx \le \int_C F(x) \,dx.
\end{equation*}
Then
$$
\| U \|_{L_p(\bR^{d}_+)}^p \le N(d,p) \|F\|_{L_p(\bR^{d}_+)}\| V
\|_{L_p(\bR^{d}_+)}^{p-1},
$$
provided that $F,V\in L_p(\bR^{d}_+)$.
\end{theorem}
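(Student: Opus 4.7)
The plan is to establish a distributional inequality for $|U|$ via a Calderón--Zygmund stopping-time decomposition based on the dyadic averages of $V$, and then integrate using the layer-cake formula. Since $V\ge|U|\ge 0$ automatically and the hypothesis is unchanged by replacing $F$ with its positive part, we may assume $V\ge 0$ and $F\ge 0$ throughout. Fix $t>0$ and a parameter $K>2^d$ to be chosen at the end. Because $V\in L_p(\bR^d_+)$, the dyadic averages $(V)_C$ tend to $0$ as the side length of $C\in\cC$ grows, so the standard dyadic Calderón--Zygmund stopping-time construction yields a disjoint family $\{C^j\}\subset\cC$ such that $t<(V)_{C^j}\le 2^d t$ (the upper bound coming from the fact that the parent cube has average $\le t$) and $V\le t$ a.e.\ on the complement of $\bigcup_j C^j$; moreover $\bigcup_j C^j$ coincides with $\{\cM V>t\}$ up to a null set. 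Since $|U|\le V$, the complement contributes nothing to $\{|U|>Kt\}$.

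On each stopping cube $C^j$, the hypothesis applied with $C=C^j$ produces a function $U^{C^j}$ satisfying $|U|\le U^{C^j}\le V$ together with
\[
\int_{C^j}|U^{C^j}-(U^{C^j})_{C^j}|\,dx\le\int_{C^j}F\,dx.
\]
Since $(U^{C^j})_{C^j}\le(V)_{C^j}\le 2^d t$, Markov's inequality yields
\[
|\{|U|>Kt\}\cap C^j|\le|\{U^{C^j}-(U^{C^j})_{C^j}>(K-2^d)t\}\cap C^j|\le\frac{1}{(K-2^d)t}\int_{C^j}F\,dx,
\]
and summing over $j$ gives the distributional bound
\[
|\{|U|>Kt\}|\le\frac{1}{(K-2^d)t}\int_{\{\cM V>t\}}F\,dx.
\]

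The layer-cake formula combined with Fubini's theorem converts this into
\[
\|U\|_{L_p(\bR^d_+)}^p=pK^p\int_0^\infty t^{p-1}|\{|U|>Kt\}|\,dt\le\frac{pK^p}{(K-2^d)(p-1)}\int_{\bR^d_+}F\,(\cM V)^{p-1}\,dx,
\]
and Hölder's inequality together with the maximal function theorem \eqref{eq20.20.48} (valid for $p>1$) gives $\int F(\cM V)^{p-1}\,dx\le\|F\|_{L_p(\bR^d_+)}\|\cM V\|_{L_p(\bR^d_+)}^{p-1}\le N(d,p)\|F\|_{L_p(\bR^d_+)}\|V\|_{L_p(\bR^d_+)}^{p-1}$. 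Choosing, for instance, $K=2^{d+1}$ produces a constant depending only on $d$ and $p$, proving the theorem. The only technical subtlety is the validity of the stopping-time construction on the one-sided dyadic family $\{\bC_l\}$ covering $\bR^d_+$, which is guaranteed by $V\in L_p$ together with Lebesgue differentiation along $\cC$.
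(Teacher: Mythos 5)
The paper itself does not prove this theorem; it simply cites it as a special case of Theorem 2.7 in Krylov's paper \cite{Krylov08}, where it is established for general filtrations of partitions in an abstract measure space. Your blind proof is correct and supplies a clean, self-contained argument for the special case at hand, and it follows essentially the same strategy as Krylov's: a dyadic Calder\'on--Zygmund stopping-time decomposition driven by the averages of $V$, a distributional estimate $|\{|U|>Kt\}|\le\frac{1}{(K-2^d)t}\int_{\{\cM V>t\}}F\,dx$ obtained by applying the oscillation hypothesis on each stopping cube and using $(U^{C^j})_{C^j}\le (V)_{C^j}\le 2^d t$, and then a layer-cake/Fubini computation that reduces the matter to $\int F(\cM V)^{p-1}$, which is handled via H\"older and the dyadic maximal function theorem. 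All the details check out: the one-sided dyadic grid on $\bR^d_+$ still has a unique parent for every cube, $V\in L_p$ forces $(V)_C\to 0$ on large cubes so the maximal stopping cubes exist, Lebesgue differentiation along the filtration gives $V\le t$ off the stopping cubes, and $p>1$ is used exactly where needed (convergence of $\int_0^{\cM V}t^{p-2}\,dt$ and boundedness of $\cM$ on $L_p$). In short, this is a correct proof and it coincides in substance with the argument underlying the result cited in the paper.
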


Recall the divergence form operator $\cL$ introduced at the
beginning of Section \ref{sec2}. For any ball $B$ of radius $r\in
(0,R_0)$ and $a^{ij}$ satisfying Assumption \ref{assum1}, we
define
$$
U_B=U_B(x):=|D_1 u|+|D_{x''}u|+|\bar a^{2j}(x^2)D_j
u|+\lambda^{1/2}|u|.
$$

\begin{lemma}
                                        \label{lem4.4}
Let $\lambda>0$, $\beta\in (1,\infty)$, and
$\beta'=\beta/(\beta-1)$ be constants, $x_1\in \bar\bR_+^d$,
$\cL\in \bL_2$, and $g=(g^1,\ldots,g^d),f\in
C_0^\infty(\bar\bR^d_+)$. Suppose that $u\in C_0^\infty(\bar
\bR^d_+)$ vanishing outside $B^+_{R_0}(x_1)$ satisfies
$$
\cL u-\lambda u=\Div g+f\quad \text{in}\,\,\bR^d_+
$$
with the conormal derivative boundary condition $a^{11}D_1 u=g^1$
on $\partial\bR^d_+$. Then under Assumption \ref{assum1}, for any
$r>0$, $\kappa\ge 32$ and $x_0\in \bar\bR^d_+$, we have
\begin{multline}
                                                  \label{eq17.21.22z}
\dashint_{B^{+}_r(x_0)}\dashint_{B^{+}_r(x_0)}|U_B (x)-
U_B (y)|^2\,dx\,dy\\
\leq N\kappa^d \dashint_{B^{+}_{\kappa r} (x_0)}
|g|^2+\lambda^{-1}f^2 \,dx
 +N\kappa^{-2\alpha} \dashint_{B^{+}_{\kappa
 r}(x_0)}\big(U_B\big)^2\,dx\\
 +N\kappa^d \Big(\dashint_{B^{+}_{\kappa r}(x_0)}
 \big(U_B\big)^{2\beta}\,dx\Big)^{1/\beta}\gamma^{1/\beta'},
\end{multline}
where $B=B_{\kappa r}(x_0)$ if $\kappa r<R_0$ and $B=B_{R_0}(x_1)$
if $\kappa r\ge R_0$, $\alpha$ is the constant from Lemma
\ref{lem2.1}, and the constant $N$ depends only on $d$ and
$\delta$. Moreover, for any $l\in \bZ$ and $C\in \bC_l$, there
exists a function $U^C$ on $\bR^d_+$ such that
\begin{equation}
                                        \label{eq19.20.22z}
N^{-1} (|Du|+\lambda^{1/2} |u|)\le U^C\le N(|Du|+\lambda^{1/2}
|u|),
\end{equation}
\begin{equation}                            \label{eq082006}
\int_C |U^C - \left(U^C\right)_C| \,dx \le N\int_C F(x) \,dx,
\end{equation}
where
$$
F=\kappa^{\frac d2}\Big(\cM\big(|g|^2+\lambda^{-1}
f^2\big)\Big)^{\frac 12}+(\kappa^{-\alpha}+\kappa^{\frac
d2}\gamma^{\frac 1 {2\beta'}})
\Big(\cM\big(U^C\big)^{2\beta}\Big)^{\frac 1{2\beta}}.
$$

The same estimates holds for $\cL\in \bL_1$ if the conormal
derivative boundary condition is replaced with the Dirichlet
boundary condition $u=0$ on $\partial\bR_+^d$.
\end{lemma}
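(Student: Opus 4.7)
The strategy is a freezing-of-coefficients perturbation argument, exactly in the spirit of the proof of Lemma \ref{lem4.8}, reducing the claim to Corollary \ref{cor2.3}. For the given $r$, $x_0$, $\kappa$, and the ball $B$ prescribed in the statement, let $\bar a^{ij}=\bar a^{ij}(x^2)$ be the coefficients provided by Assumption \ref{assum1} on $B$. When $\cL\in\bL_2$ we may arrange $\bar a^{1j}\equiv 0$ for $j\ge 2$ (there $a^{1j}\equiv 0$, so Assumption \ref{assum1} imposes nothing), so that $\bar\cL:=D_i(\bar a^{ij}D_j\cdot)$ lies in $\bL_2$. Then $u$ satisfies
$$
\bar\cL u-\lambda u=\Div\tilde g+f\quad\text{in }\bR^d_+,\qquad \tilde g^i:=g^i-(a^{ij}-\bar a^{ij})D_j u,
$$
with the conormal boundary condition $\bar a^{11}D_1 u=\tilde g^1$ on $\partial\bR^d_+$.

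Applying Corollary \ref{cor2.3} to $(u,\bar\cL,\tilde g,f)$, with the quantity $U$ there equal to $U_B$ here, yields the first two terms of the right-hand side of \eqref{eq17.21.22z}, and it remains to estimate the contribution to $|\tilde g|^2$ coming from $(a^{ij}-\bar a^{ij})D_j u$. By H\"older's inequality with exponents $\beta,\beta'$,
$$
\dashint_{B^+_{\kappa r}(x_0)}|(a^{ij}-\bar a^{ij})D_j u|^2\,dx\le\Big(\dashint_{B^+_{\kappa r}(x_0)}|Du|^{2\beta}\,dx\Big)^{\!1/\beta}\Big(\dashint_{B^+_{\kappa r}(x_0)}1_B|a^{ij}-\bar a^{ij}|^{2\beta'}\,dx\Big)^{\!1/\beta'}.
$$
The indicator $1_B$ is free in both regimes: it is trivial if $\kappa r<R_0$ (then $B=B_{\kappa r}(x_0)$), and if $\kappa r\ge R_0$ the hypothesis $\mathrm{supp}\,u\subset B^+_{R_0}(x_1)=B$ makes $Du$ itself supported in $B$. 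Using $|a^{ij}-\bar a^{ij}|\le 2\delta^{-1}$ and Assumption \ref{assum1} to turn the second factor into $N\gamma^{1/\beta'}$, and the equivalence $|Du|+\lambda^{1/2}|u|\sim U_B$ (from $\bar a^{22}\ge\delta$ and boundedness of $\bar a^{2j}$) to convert the first factor into $(\dashint U_B^{2\beta})^{1/\beta}$, one obtains the third term of \eqref{eq17.21.22z}.

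For the dyadic-cube assertion, given $C\in\bC_l$ pick $x_0\in C$ and let $r>0$ be the smallest radius with $C\subset B_r(x_0)$; take $B=B(C)$ as above and set $U^C:=U_B$. The equivalence \eqref{eq19.20.22z} is then immediate. By Cauchy--Schwarz and the cube-to-ball inequality \eqref{eq20.20.20} applied to $|U^C(\cdot)-U^C(\cdot)|^2$,
$$
\dashint_C|U^C-(U^C)_C|\,dx\le\Big(N\dashint_{B^+_r(x_0)}\dashint_{B^+_r(x_0)}|U^C(x)-U^C(y)|^2\,dx\,dy\Big)^{1/2},
$$
and then \eqref{eq17.21.22z} together with $\dashint_{B^+_{\kappa r}(x_0)}|h|\,dx\le N\cM h(y)$ for every $y\in C\subset B^+_{\kappa r}(x_0)$ (via \eqref{eq20.20.20b} and the definition of $\cM$) provides a pointwise bound by $F(y)$; integrating over $C$ yields \eqref{eq082006}. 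The case $\cL\in\bL_1$ with Dirichlet data is identical, invoking the corresponding part of Corollary \ref{cor2.3}, which rests on Lemma \ref{lem2.2}. There is no serious obstacle beyond the modest bookkeeping of matching the $\bL_2$/$\bL_1$ structural zeros in the frozen operator and verifying the $1_B$ cutoff in both regimes of $\kappa r$; once these are settled, the argument is essentially a perturbation layer on top of Corollary \ref{cor2.3}.
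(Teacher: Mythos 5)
Your proposal is correct and follows essentially the same route as the paper: the first oscillation estimate \eqref{eq17.21.22z} is obtained by freezing coefficients and perturbing around Corollary \ref{cor2.3} exactly as in Lemma \ref{lem4.8}, with the $1_B$ cutoff handling both regimes of $\kappa r$, and the dyadic-cube assertion then follows by taking $U^C=U_B$ for the smallest ball containing $C$ and combining \eqref{eq17.21.22z} with the cube-to-ball comparisons and the maximal function. The paper's proof is a terse version of exactly this argument, so your write-up is a faithful (and more detailed) reconstruction.
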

\begin{proof}
Inequality \eqref{eq17.21.22z} is a direct consequence of
Corollary \ref{cor2.3} by using a perturbation argument similar to
the proof of Lemma \ref{lem4.8}. Next, for any $C\in \bC_l$, we
find a smallest ball $B(x_0)$ which contains $C$ and define
$U^C=U_B$. Then \eqref{eq19.20.22z} follows from
\eqref{eq19.20.22}. By \eqref{eq17.21.22z} and the triangle
inequality, the left-hand side of \eqref{eq082006} is less than
$NF(x)$ for any $x\in C$, which yields \eqref{eq082006}
immediately.
\end{proof}

The following proposition is an extension of Proposition
\ref{prop0}.

\begin{proposition}
                                \label{prop4.4}
Suppose that either $\cL\in \bL_1$ and $p\in (1,2]$, or 
$\cL\in \bL_2$ and $p\in [2,\infty)$. Let $g=(g^1,\ldots,g^d)$, $f\in
C_0^\infty(\bar\bR^d_+)$. %, and $x_1\in \bar\bR^d_+$. 
Then we can find constants $\gamma\in
(0,1)$ and $N>0$ depending only on $d$, $p$, and $\delta$ such
that under Assumption \ref{assum1} the following holds true. For
any $\lambda\ge \lambda_0$ and $u\in C_{\text{loc}}^\infty(\bar\bR^d_+)$ 
satisfying
$$
\cL u-\lambda u=\Div g+f\quad \text{in}\,\, \bR^d_+
$$
with the conormal derivative boundary condition $a^{1j}D_ju=g^1$
on $\partial\bR^d_+$, we have
\begin{equation}
                \label{eq15.16.35}
\lambda\|u\|_{L_{p}(\bR^d_+)}+\lambda^{1/2}
\|Du\|_{L_{p}(\bR^d_+)}\le N\lambda^{1/2}\|g\|_{L_{p}(\bR^d_+)}
+N\|f\|_{L_{p}(\bR^d_+)}.
\end{equation}
where $\lambda_0\ge 0$ is a constant depending only on $d$,
$\delta$, $p$, and $R_0$.
%Furthermore, for $\lambda=0$ and $f\equiv 0$, we have
%\begin{equation}
%                \label{eq19.46bz}
%\|Du\|_{L_{p}(\bR^d_+)}\le N\|g\|_{L_{p}(\bR^d_+)}.
%\end{equation}
\end{proposition}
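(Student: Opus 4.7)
The plan is to mirror the proof of Proposition \ref{prop0}, with Corollary \ref{cor2.3} replaced by Lemma \ref{lem4.4} and the classical Fefferman--Stein sharp function theorem replaced by the generalization in Theorem \ref{th081201}. First, by the duality argument used in Proposition \ref{prop0} (noting that the adjoint of an operator in $\bL_2$ lies in $\bL_1$, and that Assumption \ref{assum1} is invariant under transposition of $(a^{ij})$), it suffices to treat the case $\cL\in\bL_2$ and $p\in[2,\infty)$.

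Since Lemma \ref{lem4.4} applies only to $u$ vanishing outside a single ball of radius $R_0$, I would introduce a smooth partition of unity $\{\zeta_k^2\}_{k=1}^\infty$ subordinate to a locally finite cover of $\bar\bR^d_+$ by balls $\{B_{R_0/2}(x_k)\}$ of bounded overlap, with $|D^m\zeta_k|\le N(R_0,m)$. Writing $u_k:=\zeta_k u$, each $u_k$ is supported in $B_{R_0}(x_k)$ and satisfies $\cL u_k-\lambda u_k=\Div g_k+f_k$ in $\bR^d_+$ with conormal boundary condition $a^{11}D_1 u_k=g_k^1$ on $\partial\bR^d_+$, where $g_k$, $g_k^1$, and $f_k$ differ from $\zeta_k g$, $\zeta_k g^1$, and $\zeta_k f$ by commutator terms controlled pointwise by $N(R_0)(|u|+|Du|)$.

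Fix $\beta\in(1,\infty)$ with $p>2\beta$. Applying Lemma \ref{lem4.4} to each $u_k$ produces a function $U^C_k$ satisfying \eqref{eq19.20.22z} and \eqref{eq082006} on every $C\in\cC$, with error $F_k$ of the form displayed in the lemma. Aggregating these pieces, Theorem \ref{th081201} (applied with $V$ comparable to $|Du|+\lambda^{1/2}|u|$) together with the Hardy--Littlewood maximal inequality \eqref{eq20.20.48} at exponents $p/2$ and $p/(2\beta)$, both greater than $1$, yields
\begin{equation*}
\lambda\|u\|_{L_p(\bR^d_+)}+\lambda^{1/2}\|Du\|_{L_p(\bR^d_+)}\le N\kappa^{d/2}\bigl(\lambda^{1/2}\|g\|_{L_p(\bR^d_+)}+\|f\|_{L_p(\bR^d_+)}\bigr)+N\bigl(\kappa^{-\alpha}+\kappa^{d/2}\gamma^{1/(2\beta')}\bigr)\bigl(\lambda^{1/2}\|Du\|_{L_p(\bR^d_+)}+\lambda\|u\|_{L_p(\bR^d_+)}\bigr)+N(R_0)\bigl(\|Du\|_{L_p(\bR^d_+)}+\|u\|_{L_p(\bR^d_+)}\bigr).
\end{equation*}
I would then choose $\kappa$ large and $\gamma$ small (depending only on $d,p,\delta$) so that $N(\kappa^{-\alpha}+\kappa^{d/2}\gamma^{1/(2\beta')})\le 1/2$, absorbing the second group of terms into the left-hand side; and finally choose $\lambda_0=\lambda_0(d,\delta,p,R_0)$ large enough that the third group, containing the partition-of-unity commutators, can be absorbed into $\lambda^{1/2}\|Du\|_{L_p(\bR^d_+)}$ and $\lambda\|u\|_{L_p(\bR^d_+)}$ on the left whenever $\lambda\ge\lambda_0$.

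The main obstacle I anticipate is the careful bookkeeping for the partition of unity, especially confirming that the conormal condition remains of the correct form after localization. Since $\zeta_k$ is a smooth scalar cutoff, the localized conormal condition reads $a^{11}D_1 u_k=\zeta_k g^1+a^{11}uD_1\zeta_k$ on $\partial\bR^d_+$; the extra term $a^{11}uD_1\zeta_k$ is reinterpreted as a contribution to a modified $g_k^1$. The same commutators appear inside the interior equation as zeroth- and first-order perturbations; after absorbing the $\kappa$- and $\gamma$-dependent errors, these still contain $\|Du\|_{L_p(\bR^d_+)}$ and $\|u\|_{L_p(\bR^d_+)}$ without the $\lambda^{1/2}$ or $\lambda$ factor. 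Taking $\lambda_0$ sufficiently large to absorb them into the analogous terms on the left is precisely why the proposition is an estimate for $\lambda\ge\lambda_0>0$ rather than for all $\lambda>0$.
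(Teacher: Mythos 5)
Your proposal is correct and follows essentially the same path as the paper: duality to reduce to $\cL\in\bL_2$ and $p>2$, Lemma \ref{lem4.4} to produce the pointwise sharp-function bound, Theorem \ref{th081201} together with the maximal inequality \eqref{eq20.20.48}, and finally choosing $\kappa$ large and $\gamma$ small. The only (cosmetic) difference is ordering: the paper first proves \eqref{eq15.16.35} for $u$ supported in a single ball $B_{R_0}^+(x_1)$ and only afterward invokes a standard partition-of-unity argument (citing Krylov) to pass to general $u$, whereas you interleave the localization with the sharp-function machinery; the commutator bookkeeping and the absorption via $\lambda\ge\lambda_0$ that you spell out are exactly what that cited argument contains.
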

\begin{proof}
As in the proof of Proposition \ref{prop0}, we may assume that $a^{ij}\in C^\infty$, $\cL\in \bL_2$, and $p\in (2,\infty)$. First, we consider the case when $u$ vanishes outside $B_{R_0}^+(x_1)$ for some $x_1\in \bar\bR^d_+$.
We define $U$ and $V$ to be respectively the left and
right hand side of \eqref{eq19.20.22z}. Choose $\beta>1$ such that $p>2\beta$.
By Lemma \ref{lem4.4} and Theorem \ref{th081201}, we have
$$
\| U \|_{L_p(\bR^{d}_+)}^p \le N \|F\|_{L_p(\bR^{d}_+)}\| V
\|_{L_p(\bR^{d}_+)}^{p-1},
$$
which yields
$$
\| U \|_{L_p(\bR^{d}_+)} \le N \|F\|_{L_p(\bR^{d}_+)}.
$$
By the definition of $F$ and the maximal function theorem, we bound $\| U \|_{L_p(\bR^{d}_+)}$ by
\begin{align*}
&N \kappa^{\frac d2}\Big\|\cM\big(|g|^2+\lambda^{-1}
f^2\big)\Big\|_{L_{\frac p 2}(\bR^d_+)}^{\frac 12}+N(\kappa^{-\alpha}+\kappa^{\frac
d2}\gamma^{\frac 1 {2\beta'}})
\Big\|\cM\big(U^C\big)^{2\beta}\Big\|^{\frac 1{2\beta}}_{L_{\frac p {2\beta}}(\bR^{d}_+)}\\
&\le N \kappa^{\frac d2}\big\||g|^2+\lambda^{-1}
f^2\big\|_{L_{\frac p 2}(\bR^d_+)}^{\frac 12}+N(\kappa^{-\alpha}+\kappa^{\frac
d2}\gamma^{\frac 1 {2\beta'}})
\big\|\big(U^C\big)^{2\beta}\big\|^{\frac 1{2\beta}}_{L_{\frac p {2\beta}}(\bR^{d}_+)}\\
&\le N \kappa^{\frac d2}\big(\|g\|_{L_p(\bR^{d}_+)}+\lambda^{-\frac 12}
\|f\|_{L_p(\bR^d_+)}\big)+N(\kappa^{-\alpha}+\kappa^{\frac
d2}\gamma^{\frac 1 {2\beta'}})
\|U\|_{L_{p}(\bR^{d}_+)}.
\end{align*}
Upon choosing $\kappa$ sufficiently large and $\gamma$ sufficiently small, we get
$$
\| U \|_{L_p(\bR^{d}_+)} \le N\|g\|_{L_p(\bR^{d}_+)}+N\lambda^{-\frac 12}
\|f\|_{L_p(\bR^d_+)}.
$$
By the definition of $U$, we obtain \eqref{eq15.16.35}
%Finally, \eqref{eq19.46bz} is deduced from \eqref{eq15.16.35} in the same way as \eqref{eq19.46b} is deduced from \eqref{eq19.46}.
%$$
%\lambda\|u\|_{L_{p}(\bR^d_+)}+\sqrt{\lambda}
%\|Du\|_{L_{p}(\bR^d_+)}\le N\lambda^{1/2}\|g\|_{L_{p}(\bR^d_+)}
%+N\|f\|_{L_{p}(\bR^d_+)}
%$$
for any $\lambda> 0$ and $u$ vanishing outside $B_{R_0}^+(x_1)$.
To complete the proof of \eqref{eq15.16.35} for general $u\in C_{\text{loc}}^\infty(\bar\bR^d_+)$, we use a standard partition of unity argument. See, for instance, the proof of \cite[Theorem 5.7]{Krylov_2005}.
The proposition is proved.
\end{proof}
%{\color{red} add a corollary to remove the small support condition...}

Similarly, we have the following estimate for the Dirichlet
problem.
\begin{proposition}
                                \label{lem4.5}
Suppose that $\cL\in \bL_1$ and $p\in [2,\infty)$, or $\cL\in \bL_2$ and $p\in (1,2]$.
Let $g=(g^1,\ldots,g^d)$, $f\in
C_0^\infty(\bar\bR^d_+)$.%, and $x_1\in \bar\bR^d_+$. 
Then we can find constants $\gamma\in
(0,1)$ and $N>0$ depending only on $d$, $p$, and $\delta$ such
that under Assumption \ref{assum1} the following holds true. For
any $\lambda\ge \lambda_0$ and $u\in C_{\text{loc}}^\infty(\bar\bR^d_+)$
satisfying
$$
\cL u-\lambda u=\Div g+f\quad \text{in}\,\, \bR^d_+
$$
with the Dirichlet boundary condition $u=0$ on $\partial\bR^d_+$,
we have \eqref{eq15.16.35}, 
%Furthermore, for $\lambda=0$ and $f\equiv 0$, \eqref{eq19.46bz} holds.
where $\lambda_0\ge 0$ is a constant depending
only on $d$, $\delta$, $p$, and $R_0$.
\end{proposition}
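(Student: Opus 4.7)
The strategy is to transcribe the proof of Proposition \ref{prop4.4} essentially line by line, with the Dirichlet analogues (the last assertions) of Corollary \ref{cor2.3} and Lemma \ref{lem4.4} replacing their conormal counterparts. First I would make the standard reductions: by mollification, assume $a^{ij}\in C^\infty$; the case $p=2$ is classical; and a duality argument collapses the two assertions of the proposition into one. Indeed, since the formal adjoint of an operator in $\bL_1$ lies in $\bL_2$ and the Dirichlet condition is self-dual (both boundary integrals in Green's identity vanish), the two estimates are equivalent, and I may focus on, say, $\cL\in\bL_1$ with $p\in(2,\infty)$.

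Next, assume $u$ is supported in $B^+_{R_0}(x_1)$ for some $x_1\in\bar\bR^d_+$, and let $U$ and $V$ be the left and right sides of \eqref{eq19.20.22z}. Choose $\beta>1$ such that $p>2\beta$. The last assertion of Lemma \ref{lem4.4} produces, for every dyadic cube $C\in\bC_l$, a function $U^C$ satisfying $|U|\le U^C\le V$ together with the oscillation bound
\[ \int_C |U^C-(U^C)_C|\,dx \le N\int_C F(x)\,dx, \]
where $F$ is the combination of maximal functions appearing in Lemma \ref{lem4.4}. Theorem \ref{th081201} then yields $\|U\|^p_{L_p(\bR^d_+)}\le N\|F\|_{L_p(\bR^d_+)}\|V\|^{p-1}_{L_p(\bR^d_+)}$, and since $V\le NU$ by \eqref{eq19.20.22z}, this simplifies to $\|U\|_{L_p(\bR^d_+)}\le N\|F\|_{L_p(\bR^d_+)}$.

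To finish, I would estimate $\|F\|_{L_p}$ using the Hardy--Littlewood maximal inequality exactly as in the proof of Proposition \ref{prop4.4}, arriving at
\[ \|U\|_{L_p(\bR^d_+)}\le N\kappa^{d/2}\bigl(\|g\|_{L_p(\bR^d_+)}+\lambda^{-1/2}\|f\|_{L_p(\bR^d_+)}\bigr)+N\bigl(\kappa^{-\alpha}+\kappa^{d/2}\gamma^{1/(2\beta')}\bigr)\|U\|_{L_p(\bR^d_+)}. \]
Choosing $\kappa$ large so that $N\kappa^{-\alpha}\le 1/4$ and then $\gamma$ small so that $N\kappa^{d/2}\gamma^{1/(2\beta')}\le 1/4$, the last term is absorbed into the left-hand side and produces \eqref{eq15.16.35} for $u$ supported in a ball of radius $R_0$. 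The general case is recovered by a standard partition-of-unity argument (cf.\ the proof of \cite[Theorem 5.7]{Krylov_2005}), whose cutoff commutators force the threshold $\lambda_0=\lambda_0(d,\delta,p,R_0)$.

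The main obstacle is purely bookkeeping: confirming that the Dirichlet analogues of Corollary \ref{cor2.3} and Lemma \ref{lem4.4} yield the same quantitative form of the mean-oscillation estimate so that the Fefferman--Stein-type argument closes exactly as before, and verifying that the partition-of-unity commutators can be absorbed uniformly for $\lambda\ge\lambda_0$. Both points are already built into the statements of those lemmas, so the proof is truly identical in spirit to Proposition \ref{prop4.4} and the details may be omitted.
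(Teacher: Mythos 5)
Your proposal matches the paper's intent exactly: the paper's own proof states only that it is ``almost the same as that of Proposition \ref{prop4.4} with obvious modifications,'' and the modifications you spell out---reducing by duality (Dirichlet being self-dual since the adjoint of an operator in $\bL_1$ lies in $\bL_2$), substituting the last assertion of Lemma \ref{lem4.4} for the conormal case, applying Theorem \ref{th081201} and the maximal-function estimate identically, and closing with the partition-of-unity argument that forces $\lambda\ge\lambda_0$---are precisely those modifications. The argument is correct and essentially the same as the paper's.
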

\begin{proof}
The proof is almost the same as that of Proposition \ref{prop4.4}
with obvious modifications.
\end{proof}

\subsection{Proofs of Theorems \ref{thm3} and \ref{thm4}}
                            \label{sec4.2}
We are now ready to prove the main results of this section. The
key step below is to divide both sides by $a^{11}$ so that $D_2 u$
will satisfy a divergence form equation. This idea is in
reminiscence of the classical argument of deriving from the
DeGiorgi--Moser--Nash estimate the $C^{1,\alpha}$ regularity for
non-divergence elliptic equations with measurable coefficients on
the plane; see, for instance, \cite[\S 11.2]{GT}.

\begin{proof}[Proof of Theorem \ref{thm3}]
Owing to mollifications, a density argument, and the method of continuity
it suffices for us to prove the first assertion assuming that the coefficients are
smooth and $u \in C_0^\infty (\bar \bR^d_+)$ vanishes on $x^1=0$. In this case $f \in C_0^\infty (\bar \bR^d_+)$. We shall prove this in two steps.

{\it Step 1.} In this step, we assume that $b\equiv c\equiv 0$, and $u$ vanishes outside $B_{R_0}^+(x_1)$ for some $x_1\in \bar \bR^d_+$. We move $-\lambda u$ and all the second derivatives $DD_{x''}$ to the right-hand side of the equation to get
$$
\sum_{i,j=1}^2 a^{ij}D_{ij}u =f+\lambda u-\sum_{i\, \text{or}\, j>2}a^{ij}D_{ij}u.
$$
Dividing both sides by $a^{11}$ and adding $\Delta_{x''} -\lambda u$ to both sides give
\begin{equation}
                                    \label{eq18.18.00}
\sum_{i,j=1}^2 \tilde a^{ij} D_{ij}u+\Delta_{x''} u-\lambda u=\tilde f\quad \text{in}\,\,\bR^d_+,
\end{equation}
where
$$
\tilde a^{11}=1,\quad \tilde a^{12}=0,\quad \tilde a^{21}=(a^{12}+a^{21})/a^{11},\quad \tilde a^{22}=a^{22}/a^{11},
$$
$$
\tilde f=(a^{11})^{-1}(f+\lambda u-\sum_{i\, \text{or}\, j>2}a^{ij}D_{ij}u)+\Delta_{x''} u-\lambda u.
$$
Thanks to the first assertion of Corollary \ref{cor4.7}, for any
$\gamma\in (0,1)$,
\begin{align}
                                            \label{eq18.18.11}
&\|DD_{x''}u\|_{L_p(\bR^d_+)}+\lambda \|u\|_{L_p(\bR^d_+)}
\le N\gamma^{\alpha_1} \|D^2 u\|_{L_p(\bR^d_+)}+N_1\|f\|_{L_p(\bR^d_+)},\\
                                        \label{eq18.17.53}
&\|\tilde f\|_{L_p(\bR^d_+)} \le N\gamma^{\alpha_1} \|D^2
u\|_{L_p(\bR^d_+)}+N_1\|f\|_{L_p(\bR^d_+)}.
\end{align}
%where $N=N(d,p,\delta)$ and $N_1=N_1(d,p,\delta,\gamma)$.
Here and in the sequel, we denote $N$ to be a constant depending
only on $d$, $p$, $\delta$, and $N_1$ to be a constant depending
on these parameters as well as $\gamma$. Now define a divergence
form operator $\cL$ by
$$
\cL v=\sum_{i,j=1}^2 D_i(\tilde a^{ij} D_{j}v)+\Delta_{x''} v.
$$
Clearly, $\cL\in \bL_2$ and is uniformly nondegenerate with an ellipticity constant depending only on $\delta$. Moreover, $\tilde a^{ij}$ satisfy Assumption \ref{assum1} with $N(\delta)\gamma$ in place of $\gamma$. By differentiating \eqref{eq18.18.00} with respect to $x^2$ and bearing in mind the zero Dirichlet boundary condition, we see that $w:=D_2 u$ satisfies
$$
\cL w-\lambda w=D_2 \tilde f\quad \text{in}\,\,\bR^d_+
$$
with the Dirichlet boundary condition $w=0$ on $x^1=0$. By applying Proposition \ref{lem4.5} to $w$, we get
$$
\|Dw\|_{L_p(\bR^d_+)}\le N(d,p,\delta)\|\tilde f\|_{L_p(\bR^d_+)}
$$
provided that $\gamma<\gamma_1(d,p,\delta)$ and $\lambda\ge \lambda_0(d,p,\delta,R_0)$. This together with
\eqref{eq18.17.53} yields
\begin{equation}
                                    \label{eq18.18.13}
\|DD_2 u\|_{L_p(\bR^d_+)}\le N\gamma^{\alpha_1} \|D^2
u\|_{L_p(\bR^d_+)}+N_1\|f\|_{L_p(\bR^d_+)}
\end{equation}
Now the only missing term $D_1^2u$ can be estimated by combining
\eqref{eq18.18.11}, \eqref{eq18.18.13}  and using the equation
itself. Therefore, we deduce
\begin{equation*}
                                    %\label{eq18.18.28}
\|D^2u\|_{L_p(\bR^d_+)}+\lambda \|u\|_{L_p(\bR^d_+)}\le
N\gamma^{\alpha_1} \|D^2
u\|_{L_p(\bR^d_+)}+N_1\|f\|_{L_p(\bR^d_+)}
\end{equation*}
if $\gamma<\gamma_1$. Upon choosing $\gamma$ even smaller and
using the interpolation inequality, we obtain \eqref{eq15.16.02}.

{\it Step 2.} We now remove the additional assumptions in the previous step. First, the assumption that $u$ vanishes outside $B_{R_0}^+$ can be dropped by using a partition of unity as in Proposition \ref{prop4.4}. For nonzero $b$ and $c$, we move all the lower order terms in $Lu$ to the right-hand side of the equation:
$$
a^{ij}D_{ij}u-\lambda u=f-b_iD_iu-cu
$$
By the estimate proved above and the boundedness of $b,c$, we have
\begin{multline*}
\lambda\|u\|_{L_{p}(\bR^d_+)}+\lambda^{1/2}\|Du\|_{L_{p}(\bR^d_+)}+\|D^2u\|_{L_{p}(\bR^d_+)}\\
\le N\|f\|_{L_{p}(\bR^d_+)}+NK\|u\|_{L_{p}(\bR^d_+)}+NK\|Du\|_{L_{p}(\bR^d_+)}.
\end{multline*}
Bearing in mind that $N$ is independent of $\lambda$, we take $\lambda_1=\lambda_1(d,p,\delta,R_0,K)(\ge \lambda_0)$ sufficiently large such that for $\lambda\ge\lambda_1$, the second and the third terms on the right-hand side above can be absorbed to the left-hand side. Thus, we get \eqref{eq15.16.02}. The theorem is proved.
\end{proof}

\begin{proof}[Proof of Theorem \ref{thm4}]
The proof is similar to that of Theorem \ref{thm3}. Therefore, we only point out the differences. The function $w$ still satisfies the same equation, but instead of the Dirichlet boundary condition, it satisfies the conormal boundary condition $D_1 w=0$ on $x^1=0$. So we apply Proposition \ref{prop4.4} instead of Proposition \ref{lem4.5}, and use the second assertion of Corollary \ref{cor4.7} instead of the first one. The remaining proof is the same.
\end{proof}

\mysection{Applications}
                                        \label{sec5}
In this section, we discuss several applications of our results in Sections \ref{sec3} and \ref{sec4}. Consider the elliptic equation
\begin{equation}
                                            \label{eq5.1}
Lu-\lambda u:=a^{ij}D_{ij}u+b_iD_iu+cu-\lambda u=f
\end{equation}
with the zero Dirichlet boundary condition in a convex wedge $\Omega_\theta=\cO_\theta\times \bR^{d-2}$, where $\theta\in (0,\pi)$ and
$$
\cO_\theta=\{x'\in\bR^2\,|\,x_1> |x'|\cos(\theta/2)\}.
$$
As in Section \ref{sec4}, we suppose that $a^{ij}$ satisfy the ellipticity condition \eqref{eq17.53}, and $b_i$ and $c$ are measurable functions bounded by a constant $K>0$. Furthermore, we assume that $a^{ij}$ have small local mean oscillations with respect to $x$, i.e., there is a constant $R_0\in (0,1]$ such that the following holds with a parameter $\gamma>0$ to be chosen.

\begin{assumption}                          \label{assum1b}
For any ball $B$ of radius $r\in (0,R_0)$,
$$
\sum_{i,j=1}^d \dashint_{B}|a^{ij}(x)-(a^{ij})_B|\,dx \le \gamma,\quad\text{where}\,\,
(a^{ij})_B=\dashint_{B}a^{ij}(x)\,dx.
$$
\end{assumption}
We have the following solvability result for \eqref{eq5.1} in the convex wedge $\Omega_\theta$.
\begin{theorem}
                                    \label{thm5.1}
Let $p \in (1,2]$
and $f \in L_p(\Omega_\theta)$.
Then there exist constants $\gamma\in (0,1)$ and $N>0$ depending only on $d$, $p$, $\delta$, and $\theta$ such that under Assumption \ref{assum1b}
the following hold true.
For any $u \in \WO_p^2(\Omega_\theta)$ satisfying
\begin{equation}
                             \label{eq15.16.01z}
L u - \lambda u =f\quad \text{in}\,\,\Omega_\theta,
\end{equation}
we have
\begin{equation*}
\lambda\|u\|_{L_{p}(\Omega_\theta)}+\lambda^{1/2}
\|Du\|_{L_{p}(\Omega_\theta)}+\|D^2u\|_{L_{p}(\Omega_\theta)} \le
N\|f\|_{L_{p}(\Omega_\theta)},
\end{equation*}
provided that $\lambda \ge \lambda_0$,
where $\lambda_0 \ge 0$ is a constant
depending only on $d$, $p$, $\delta$, $K$, $R_0$, and $\theta$. Moreover,
for any  $\lambda > \lambda_0$, there exists a unique $u \in W_p^2(\Omega_\theta)$ solving \eqref{eq15.16.01z} with the Dirichlet boundary condition $u=0$ on $\partial \Omega_\theta$. Finally, if $b=c=0$ and $a^{ij}$ are constants, then we can take $\lambda_0=0$.
\end{theorem}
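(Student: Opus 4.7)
The plan is to reduce Theorem~\ref{thm5.1} to the half-space results Theorem~\ref{thm3} and, in the constant coefficient case, Theorem~\ref{thm1}, via two successive reductions: a linear flattening of the wedge followed by an odd reflection across one of the resulting flat faces.

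To flatten the wedge, I would introduce a linear change of variables $y' = T x'$ on the $(x^1,x^2)$-plane, with $T$ assembled from the two inward unit normals to $\cO_\theta$; explicitly one can take
\[
T = \begin{pmatrix} \sin(\theta/2) & -\cos(\theta/2) \\ \sin(\theta/2) & \cos(\theta/2) \end{pmatrix},\qquad \det T = \sin\theta,
\]
which is a linear bijection sending $\cO_\theta$ onto the first quadrant $Q = \{y^1>0,\, y^2>0\}$ and its two faces onto $\{y^1=0\}$ and $\{y^2=0\}$ respectively. Leaving $x''$ untouched, equation~\eqref{eq15.16.01z} becomes $\tilde L v - \lambda v = \tilde f$ on $Q\times \bR^{d-2}$ with homogeneous Dirichlet data on both faces; the transformed leading coefficients $\tilde a^{ij}$ remain uniformly elliptic (with constant depending on $\theta$ and $\delta$) and still satisfy Assumption~\ref{assum1b} with parameter $C(\theta,\delta)\gamma$, while the lower order coefficients stay bounded by $C(\theta,\delta)K$.

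Next, I would extend $v$ and $\tilde f$ oddly across $\{y^2 = 0\}$, setting $\hat v(y) = \operatorname{sgn}(y^2)\, v(y^1, |y^2|, y'')$ and similarly for $\hat f$, and extend each coefficient $\tilde a^{ij}$ with the parity in $y^2$ required to preserve the equation: evenly when $i=j=2$ or both $i,j\neq 2$, and with an extra factor of $\operatorname{sgn}(y^2)$ in the remaining mixed cases. A direct parity check shows that $\hat v \in \WO^2_p(\{y^1>0\})$ solves $\hat L \hat v - \lambda \hat v = \hat f$ on the half-space $\{y^1>0\}$ with Dirichlet data on $\{y^1=0\}$, and the extended coefficients $\hat a^{ij}$ are merely measurable in $y^2$ but, by a standard ball-splitting argument (treating balls that cross $\{y^2=0\}$ separately from those that do not), satisfy Assumption~\ref{assum1} with $y^2$ as the measurable direction and mean-oscillation parameter $C(\theta,\delta)\gamma$. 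Upon choosing the original $\gamma$ small enough, Theorem~\ref{thm3} supplies the desired $W^2_p$ estimate for $\hat v$ when $\lambda \ge \lambda_1$, and restricting to $\{y^2>0\}$ and inverting $T$ transfers it back to $\Omega_\theta$. Existence and uniqueness for $\lambda > \lambda_0$ then follow by the method of continuity, or equivalently by solving for $\hat u$ in the half-space via Theorem~\ref{thm3} and invoking uniqueness to force the odd symmetry in $y^2$ so that $\hat u$ descends to the quadrant.

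For the final assertion, if $b \equiv c \equiv 0$ and the $a^{ij}$ are constant, then the flattening and the odd reflection together produce coefficients $\hat a^{ij}$ that depend only on $y^2$ (constants on each side of $\{y^2=0\}$), placing the extended problem directly within the scope of Theorem~\ref{thm1}; this yields the estimate for every $\lambda \ge 0$ and hence $\lambda_0 = 0$. The main obstacle in this plan is the second step: verifying carefully that after the linear flattening and the odd reflection the extended coefficients genuinely fit the hypothesis of Theorem~\ref{thm3} with a mean-oscillation parameter below the smallness threshold in $(d,p,\delta)$ imposed by that theorem, so that Theorem~\ref{thm3} may be legitimately invoked. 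Once this verification is in place, the remaining assertions are immediate consequences of the results in Sections~\ref{sec3} and~\ref{sec4}.
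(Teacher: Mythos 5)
Your proposal is correct and follows essentially the same route as the paper: a linear change of variables flattens $\cO_\theta$ to the first quadrant, an odd extension across $\{x^2=0\}$ with the appropriate sign on the mixed coefficients $a^{2j}$ ($j\neq 2$) turns the quadrant problem into a half-space problem whose extended coefficients satisfy Assumption~\ref{assum1} (with roughly doubled oscillation parameter), and then Theorem~\ref{thm3} (or Theorem~\ref{thm1} for constant coefficients, giving $\lambda_0=0$) is applied.
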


For the proof, first by a linear transformation of the coordinates
one may assume that $\theta=\pi/2$ and $\cO_{\pi/2}$ is the first quadrant $\{x'\in\bR^2\,|\,x^1>0,x^2>0\}$. Now take odd/even extensions of the equation with respect to $x^2$ as follows:
$$
\tilde a^{ij}(x)={\text{sgn}(x^2)}a^{ij}(x^1,|x^2|,x'')\quad \text{for}\,\,i=2,j\neq 2\,\,\text{or}\,\,j=2,i\neq 2,
$$
$$
\tilde a^{ij}(x)=a^{ij}(x^1,|x^2|,x'')\quad \text{otherwise},
$$
and
$$
\tilde b^2(x)={\text{sgn}(x^2)}b^2(x^1,|x^2|,x''),\quad \tilde b^j(x)=b^j(x^1,|x^2|,x''),\,\,j\neq 2,
$$
$$
\tilde c(x)=c(x^1,|x^2|,x''),\quad \tilde f(x)= {\text{sgn}(x^2)} f(x^1,|x^2|,x''),
$$
$$
\tilde u(x)= {\text{sgn}(x^2)} u(x^1,|x^2|,x'').
$$
It is easily seen that the new coefficients $\tilde a^{ij}$ satisfy Assumption \ref{assum1} with $2\gamma$. Moreover, we have $\tilde f\in L_{p}(\bR^d_+)$. Let $\tilde L$ be the elliptic operator with coefficients $\tilde a^{ij},\tilde a^i,\tilde {b}^i,\tilde c$. Then $u$ satisfies \eqref{eq15.16.01z} in $\Omega_{\pi/2}$ with the zero Dirichlet boundary condition if and only if $\tilde u$ satisfies
$$
\tilde L\tilde u-\lambda \tilde u=\tilde f\quad\text{in}\,\,\bR^d_+
$$
with the same zero Dirichlet boundary condition. Therefore, the first two assertions of Theorem \ref{thm5.1} follow immediately from Theorem \ref{thm3}. The last assertion is a consequence of Theorem \ref{thm1}. We note that by Remark \ref{rem3.2}, Theorem \ref{thm5.1} actually holds for $p\in (1,2+\varepsilon)$, where $\varepsilon>0$ is a constant which depends only on $d$, $\delta$, and $\theta$.

As a further application, under the same conditions of coefficients, we also obtain the $L_p$-solvability of \eqref{eq5.1} in $\Omega=\cO\times\bR^{d-2}$ with the zero Dirichlet boundary condition, where $\cO$ is a convex polygon in $\bR^2$. Indeed, this is deduced from Theorem \ref{thm5.1} by using a  partition of unity argument. We omit the details.

Another application of Theorem \ref{thm3} is the $W^2_p$-solvability of the Dirichlet
problem for the equation
$$
a^{ij}D_{ij} u=f
$$
in the unit ball $B_1$ when $p\in (1,2]$. Here
we assume that
$a^{ij}$ are piecewise VMO in the upper and lower half balls $B_1^+$ and $B_1^-$. A particular case is that $a^{ij}$ are piecewise constants in $B_1^+$ and $B_1^-$. This
equation can be solved by following the steps in Chapter 11 of
\cite{Kr08}. We notice that when locally flattening the boundary, one gets an equation with leading coefficients which are either VMO or satisfy Assumption \ref{assum1}.

%==================================================

\section*{Acknowledgement}
The author is grateful to Nicolai V. Krylov and Doyoon Kim for their helpful comments.

\appendix

\mysection{Proof of Lemma \ref{lem0}} Denote $\beta=1-2/p_0$. By
the triangle inequality, we have
$$
\sup_{\substack{x,y \in \Omega\\x\ne y}}\frac{|f(x) - f(y)|}{|x-y|^{\beta}}
\le
\sup_{\substack{x',y'\in \tilde B_1^+,x''\in \hat B_1\\x'\ne y'}}\frac{|f(x',x'') - f(y',x'')|}{|x'-y'|^{\beta}}
$$
$$
+ \sup_{\substack{x'\in \tilde B_1^+,x'',y''\in \hat B_1\\x''\ne y''}}
\frac{|f(x',x'') - f(x',y'')|}{|x''-y''|^{\beta}}
:= I_1 + I_2.
$$

{\em Estimate of $I_1$:} By the Sobolev embedding theorem, for any fixed $x''\in \hat B_1$, $f(x',x'')$ as a function of $x'\in \tilde B_1^+$ satisfies
\begin{equation}                            \label{eq01}
%\sup_{\substack{x',y'\in \tilde B_1^+,x''\in \hat B_1\\x'\ne y'}}
%\frac{|f(x',x'')-f(y',x'')|}{|x'-y'|^{\beta}}
I_1\le N \| f(\cdot,x'') \|_{W_{p_0}^1(\tilde B_1^+)}.
\end{equation}
Recall that $k\ge d/2$ and $p_0>2$. By the Sobolev embedding theorem again, for any fixed $x'\in \tilde B_1^+$, $f(x',x'')$ and $D_{x'}f(x',x'')$ as functions of $x'' \in \hat B_1$ satisfy
\begin{multline*}
\sup_{x'' \in \hat B_1}\left(|f(x',x'')|+|D_{x'}f(x',x'')|\right)\\
\le N\|f(x',\cdot)\|_{W_{p_0}^k(\hat B_1)}+
N\|D_{x'}f(x',\cdot)\|_{W_{p_0}^k(\hat B_1)}.
\end{multline*}
This implies that, for any $x'' \in \hat B_1$,
\begin{multline*}
\int_{\tilde B_1^+}|f(x',x'')|^{p_0}\, dx'
+ \int_{\tilde B_1^+}|D_{x'}f(x',x'')|^{p_0}\,dx'\\
\le N \sum_{i \le 1}\sum_{j\le k} \|
D^i_{x'}D^{j}_{x''}f\|^{p_0}_{L_{p_0}(\Omega)}.
\end{multline*}
This combined with \eqref{eq01} shows that
\begin{equation}
                                    \label{eq27.14.23}
%\sup_{\substack{x_1, y_1\in(-1,1), x_1 \ne y_1\\(t,x') \in (0,1)\times B'_1}}
%\frac{|U(t,x_1,x')-U(t,y_1,x')|}{|x_1-y_1|^{1/2}}
I_1 \le N \sum_{i \le 1}\sum_{j\le k} \|
D^i_{x'}D^{j}_{x''}f\|_{L_{p_0}(\Omega)}.
\end{equation}

{\em Estimate of $I_2$:} Again
using the Sobolev embedding theorem, for each $x'\in \tilde B_1^+$,
$f(x',x'')$ as a function of $x'' \in \hat B_1$ satisfies
\begin{equation}                            \label{eq02}
%\sup_{\substack{(t,x'), (s,y') \in Q_1'\\(t,x')\ne (s,y')}}
%\frac{|U(t,y_1,x')-U(s,y_1,y')|}{|t-s|^{1/4}+|x'-y'|^{1/2}}
I_2\le N \| f(x', \cdot) \|_{W_{p_0}^k(\hat B_1)}.
\end{equation}
For each $j \le k$ and $x''\in \hat B_1$,
$D^j_{x''}f(x',x'')$ as a function of $x' \in \tilde B_1^+$ satisfies
$$
\sup_{x'\in \tilde B_1^+}|D^j_{x''}f(x',x'')|
\le N\|D^j_{x''}f(\cdot,x'')\|_{W^1_{p_0}(\tilde B_1^+)}.
$$
This together with \eqref{eq02} gives
\begin{equation}
                                        \label{eq27.14.24}
%\sup_{\substack{y_1\in(0,1)\\(t,x'), (s,y') \in (0,1)\times B'_1\\(t,x')\ne (s,y')}}
%\frac{|U(t,y_1,x')-U(s,y_1,y')|}{|t-s|^{\gamma}+|x'-y'|^{\gamma}}
I_2 \le N\sum_{i \le 1}\sum_{ j\le k}
\|D^i_{x'}D^{j}_{x''}f\|_{L_{p_0}(\Omega)}.
\end{equation}
Combining \eqref{eq27.14.23} and \eqref{eq27.14.24} completes the proof of the lemma.

\end{document}